\documentclass{amsart}

\usepackage[utf8]{inputenc}
\usepackage{amsmath,amsfonts}
\usepackage{amsthm}
\usepackage{amssymb}
\usepackage{graphicx}
\usepackage{enumitem}
\usepackage{genyoungtabtikz}
\usepackage{ytableau}

\allowdisplaybreaks

\def\sa{\vskip .125truein}

\def\allowhooks{tunnel hooks}
\def\allowhook{tunnel hook}
\def\allowhookfilling{tunnel hook covering}
\def\allowhookfillings{tunnel hook coverings}
\def\allowcell{tunnel cell}
\def\allowcells{tunnel cells}

\def\bordercells{boundary cells}

\def\bordercell{boundary cell}

\def\THF{THC}
\def\THC{THC}
\def\GBPR{GBPR}

\def\murmin{\mu}

\def\h{\mathfrak{h}}

\def\c{\tau}
\def\n{\xi}
\def\K{\mathbf{K}}
\def\HS{H}
\def\tc{\c_r}
\def\td{t}

\newtheorem{thm}{Theorem}
\newtheorem{ex}[thm]{Example}
\newtheorem{Note}[thm]{Note}
\newtheorem{lem}[thm]{Lemma}
\newtheorem{cor}[thm]{Corollary}
\newtheorem{prop}[thm]{Proposition}
\newtheorem{proc}[thm]{Procedure}
\newtheorem{defn}[thm]{Definition}
\newtheorem{theorem}[thm]{Theorem}

\numberwithin{thm}{section}

\definecolor{lightblue}{rgb}{.85,.9,.96}
\definecolor{lightpurple}{rgb}{.9,.5,.9}
\definecolor{lightred}{rgb}{.9,.7,.7}
\definecolor{grey}{rgb}{.85,.85,.85}

\def\sign{\epsilon}
\def\calN{\mathcal{N}}
\def\calT{\mathcal{T}}
\def\calB{\mathcal{B}}

\Yboxdimx{30pt}
\Ylinecolour{black}
\Ynodecolour{black!50!black}

\usepackage[colorinlistoftodos]{todonotes}

\DeclareMathOperator{\I}{\mathfrak{S}}
\DeclareMathOperator{\dI}{\mathfrak{S}^{\star}}
\DeclareMathOperator{\Sym}{Sym}
\DeclareMathOperator{\SSYT}{SSYT}
\def\SYM{\Sym}
\DeclareMathOperator{\NSym}{NSym}
\def\NSYM{\NSym}
\DeclareMathOperator{\QSym}{QSym}

\DeclareMathOperator{\spin}{bank}
\DeclareMathOperator{\munu}{\mu/\nu}

\DeclareMathOperator{\munurplus}{\mu/\nu^{(r)}}
\DeclareMathOperator{\SHF}{THC}
\DeclareMathOperator{\taxi}{taxi}
\DeclareMathOperator{\aug}{aug}
\DeclareMathOperator{\fl}{flat}
\DeclareMathOperator{\ndet}{\mathfrak{det}}

\DeclareMathSymbol{\shortminus}{\mathbin}{AMSa}{"39}



\begin{document}

\title[Comb. NSym inverse Kostka] {A combinatorial interpretation of the noncommutative inverse Kostka matrix}


\author{Edward E. Allen}
\address{Dept. of Mathematics, Wake Forest University, Winston-Salem, NC 27109}
\curraddr{}
\email{allene@wfu.edu}
\thanks{}

\author{Sarah K. Mason}
\address{Dept. of Mathematics, Wake Forest University, Winston-Salem, NC 27109}
\curraddr{}
\email{masonsk@wfu.edu}
\thanks{SM was partially supported by an AMS-Simons PUI grant.}

\subjclass[2010]{Primary 05E05; Secondary 05A05, 05A19}

\date{}

\dedicatory{}

\begin{abstract}
   We provide a combinatorial formula for the expansion of immaculate noncommutative symmetric functions into
    complete homogeneous noncommutative symmetric functions.  To do this, we introduce generalizations of Ferrers diagrams which we call \GBPR~diagrams.
A \GBPR~diagram assigns a color (grey, blue, purple, or red) to each cell
of the diagram.
We define \allowhooks, which play a role similar to that of the special rim hooks appearing in the E\u{g}ecio\u{g}lu-Remmel formula for the symmetric inverse Kostka matrix.  We extend this interpretation to skew shapes and fully generalize to define immaculate functions indexed by integer sequences skewed by integer sequences.  Finally, as an application of our combinatorial formula, we extend Campbell's results on ribbon decompositions of immaculate functions to a larger class of shapes.
\end{abstract}

\maketitle

\section{Introduction}

The ring $\Sym$ of symmetric functions on a set of commuting variables consists of all polynomials invariant under the action of the symmetric group.  Symmetric functions play an important role in representation theory, combinatorics, and other areas of mathematics and the physical and natural sciences.  Bases for $\Sym$ are indexed by partitions; two ubiquitous examples are the \emph{Schur functions} $s_{\lambda}$ and the \emph{complete homogeneous symmetric functions} $h_{\lambda}$.  Schur functions correspond to irreducible representations of the symmetric group, and their multiplication corresponds to the cohomology of the Grassmannian~\cite{Ful97,Mac95,Sag01}.

The inverse Kostka matrix is the transition matrix from the Schur basis of $\Sym$ to the complete
homogeneous basis.  Objects called \emph{special rim hooks} are used by E\u{g}ecio\u{g}lu and Remmel to construct a combinatorial interpretation of this matrix~\cite{EgeRem90}, originating from the Jacobi-Trudi formula.

The Hopf algebra $\NSym$ of noncommutative symmetric functions is freely generated by a collection of noncommutative
algebraically independent generators $\HS_i$, one at each positive degree $i$.  The set $\{ H_{\alpha}:= H_{\alpha_1} H_{\alpha_2} \cdots H_{\alpha_k} \}$, indexed by compositions $\alpha$ of $n$, forms a basis for $\NSym$.  The map $\chi: \NSym \rightarrow \Sym$ defined by $\chi(H_{\alpha})=h_{\alpha_1} h_{\alpha_2} \cdots h_{\alpha_k}$ (sometimes called the ``forgetful map since it ``forgets" that the generators don't commute) sends elements of $\NSym$ to elements of $\Sym$.

The immaculate basis for $\NSym$ is a Schur-like basis that maps to the Schur functions under the forgetful map, introduced in~\cite{BBSSZ14} through creation operators analogous to those used to construct the Schur functions.  It is natural to ask if there is a generalization to $\NSym$ of the combinatorial inverse Kostka formula, since there is a Jacobi-Trudi style formulation for the immaculate basis in terms of the complete homogeneous basis for $\NSym$~\cite{BBSSZ14}.  
In this paper, we answer this question in the affirmative, providing a combinatorial formula for the expansion of the immaculate basis into the complete homogeneous basis for $\NSym$ using ribbon-like objects we call \emph{\allowhooks}.  
Specifically, we prove the following theorem.

\begin{theorem}
The decomposition of the immaculate noncommutative symmetric functions into the
complete homogeneous noncommutative symmetric functions is given by the following formula.
\begin{equation}
\I_{\mu}=\sum_{\gamma\in \SHF_{\mu}} \prod_{r=1
}^k \ \sign(\h(r,\c_r))\ H_{\Delta(\h(r,\c_r))},
\label{Eq:TH11}
\end{equation}
where $\mu\in\mathbb{Z}^k,$
 $\SHF_{\mu}$ denotes the collection of \allowhookfillings~of a diagram of shape $\mu,$ and a sign $\sign(\h(r,\c_r))$ together with an integer value $\Delta(\h(r,\c_r))$ are assigned to each \allowhook~$\h(r,\c_r)$ in each $\gamma \in \SHF_{\mu}$.
\label{T:main}
\end{theorem}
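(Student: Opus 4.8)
The plan is to derive \eqref{Eq:TH11} from the noncommutative Jacobi-Trudi expansion of the immaculate basis established in~\cite{BBSSZ14}. That formula writes $\I_\mu$ in the form
\[
\I_\mu \ses \sum_{\sigma \in S_k} \sign(\sigma)\, H_{\mu_1 - 1 + \sigma(1)}\, H_{\mu_2 - 2 + \sigma(2)} \cdots H_{\mu_k - k + \sigma(k)},
\]
with the conventions $H_0 = 1$ and $H_j = 0$ for $j < 0$, and with the factors multiplied in increasing order of the row index to respect noncommutativity. Each nonzero summand is thus an \emph{ordered} word in the generators $H_j$. Because the forgetful map $\chi(H_\alpha) = h_{\alpha_1}\cdots h_{\alpha_k}$ discards this order, the commutative E\u{g}ecio\u{g}lu-Remmel argument of~\cite{EgeRem90} cannot be transported verbatim: any cancellation we perform must preserve the full ordered word, not merely the multiset of indices.

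First I would encode each permutation summand as a way of threading hooks through the diagram of shape $\mu$, reading the exponent $\mu_i - i + \sigma(i)$ as the extent of a hook that originates in row $i$ and turns according to $\sigma$. Ranging over all $\sigma \in S_k$ then produces every covering of the cells of $\mu$ by such hooks, and $\sign(\sigma)$ factors as a product of local hook contributions. I would next verify that the \GBPR~coloring records, cell by cell, exactly the turning and overlap information needed to recover both the factor $H_{\Delta(\h(r,\c_r))}$ contributed by each \allowhook~and its sign $\sign(\h(r,\c_r))$; in this way the valid \allowhookfillings~of $\SHF_\mu$ are identified with a distinguished family of threadings.

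The heart of the proof is a \emph{sign-reversing involution} on those threadings that fail to be \allowhookfillings. The involution must pair each such configuration with another of opposite sign but \emph{identical} ordered $H$-word, so that the pair cancels in $\NSym$, and its fixed points must be precisely the members of $\SHF_\mu$. Concretely, I would isolate a canonical site where two hooks interact illegally --- for instance a place where their turning cells violate the admissible coloring rules --- and toggle that interaction, which alters $\sigma$ by a transposition (flipping $\sign$) while leaving the induced sequence of parts unchanged. Checking that this toggle is well defined, squares to the identity, reverses the sign, and fixes exactly the \allowhookfillings~is the bulk of the technical work.

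The main obstacle will be the rigidity imposed by noncommutativity. In the classical setting one may freely permute the parts of $h_\lambda$, which gives wide latitude in building the canceling map; here the word $H_{a_1} H_{a_2}\cdots$ is fixed, so the toggle must return a configuration whose parts $\Delta(\h(r,\c_r))$ appear in the \emph{same order}, even though $\sigma$ has changed. Controlling this is precisely the purpose of the four-color \GBPR~bookkeeping, and the crux of the argument is to show that the colors carry enough information to force order-preservation of the surviving word. Once the involution is established with all four properties, the sum over its fixed points collapses to the right-hand side of \eqref{Eq:TH11}.
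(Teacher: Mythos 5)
Your opening move --- expanding $\I_\mu=\ndet(M_\mu)$ as a signed sum over $\sigma\in S_k$ of ordered words $H_{\mu_1-1+\sigma_1}\cdots H_{\mu_k-k+\sigma_k}$ and then encoding each summand as a hook configuration in the diagram --- matches the paper's starting point. But the heart of your plan, a sign-reversing involution on ``threadings that fail to be \allowhookfillings'' whose fixed points are $\SHF_\mu$, is the wrong device here, and carrying it out would leave you with nothing to cancel. The set $\SHF_\mu$ is in \emph{bijection} with $S_k$ (this is Proposition~\ref{T:perm}): once a \allowhook\ starting in row $r$ has its terminal cell chosen, it is forced to consist of \emph{all} boundary cells in the intervening rows, so the entire covering is determined by the sequence of terminal cells, and these choices correspond exactly to the diagonals $\mathcal{L}_{\sigma_1},\ldots,\mathcal{L}_{\sigma_k}$, i.e.\ to a permutation. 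There is no larger ambient family of configurations, no ``illegal interactions'' to toggle, and no cancellation in the statement of Theorem~\ref{T:main}: the right-hand side has exactly $k!$ terms (some of which vanish only because $H_a=0$ for $a<0$). If you instead define ``threadings'' loosely enough that non-\allowhookfillings\ exist, then cancelling them would change the value of the sum away from $\ndet(M_\mu)$, since each $\sigma$ contributes its own term to the determinant.

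What the proof actually requires --- and what you treat as a preliminary ``verification'' before the involution --- is the whole content: (i) the bijection $\sigma\leftrightarrow\gamma$ between permutations and \allowhookfillings, which rests on the transition lemmas for \allowcells\ (Lemma~\ref{T:Losecells} and Lemma~\ref{L:diags}); (ii) the identity $(M_\mu)_{r,\sigma_r}=H_{\Delta(\h(r,\c_r))}$, proved by splitting $\Delta$ into $\spin(r)+\taxi(\h(r,\c_r))=(\mu_r-\nu_r^{(r-1)})+(\nu_r^{(r-1)}-r+\sigma_r)$ (Lemma~\ref{T:Mrj}); and (iii) the sign computation $\sign(\h(r,\c_r))=(-1)^{L(\sigma_r)}$ via the Lehmer code, so that the product of hook signs equals $\sign(\sigma)$ (Lemma~\ref{T:signs}). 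Your worry about noncommutativity forcing the toggled word to preserve its order is a symptom of the same misdiagnosis: because the correspondence is term-by-term rather than cancellative, the ordered word is automatically preserved and no order-preservation argument is needed. (Sign-reversing involutions of the kind you describe do appear later in the paper, but only in Section~\ref{sec:ribbon} for the ribbon expansions, not in the proof of Theorem~\ref{T:main}.)
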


\noindent
Note that the product 
$\prod_{r=1
}^k \ \sign(\h(r,\c_r))\ H_{\Delta(\h(r,\c_r))}$ 
in Equation \eqref{Eq:TH11} is taken in order from $r=1$ to $k$ so that 
$$\prod_{r=1
}^k \ \sign(\h(r,\c_r))\ H_{\Delta(\h(r,\c_r))}= \sign(\h(1,\c_1))\ H_{\Delta(\h(1,\c_1))} \cdots \sign(\h(k,\c_k))\ H_{\Delta(\h(k,\c_k))},
$$
since the functions $H_{\Delta(\h(r,\c_r))}$ do not commute.  

The process of constructing \allowhookfillings~is an iterative process which introduces a skew-shape generalization along the way.  This naturally leads to the introduction of a candidate for the \emph{skew immaculate noncommutative symmetric functions} $\I_{\mu/\lambda}$ and a generalization of Theorem~\ref{T:main} to the skew setting.  Although this construction is compatible with skew Schur functions (following~\cite{Mac95}) under the forgetful map, this doesn't always align with the Hopf algebraic skewing operator.  See Section~\ref{S:modifynu} for details.

\begin{thm}{\label{T:mainskew}}
The decomposition of the skew immaculate noncommutative symmetric functions $\I_{\mu/\lambda}$ (with $\mu \in\mathbb{Z}^k$ and $\lambda$ a partition with at most $k$ parts) into the
complete homogeneous noncommutative symmetric functions is given by the following formula.
\begin{equation}
\I_{\mu/\lambda}=\sum_{\gamma\in \SHF_{\mu/\nu}} \prod_{r=1
}^k \ \ \sign(\h(r,\c_r))\ H_{\Delta(\h(r,\c_r))},
\end{equation}
where $\SHF_{\mu/\lambda}$ denotes the collection of \allowhook~coverings of a diagram of shape $\mu/\lambda$, and a sign $\sign(\h(r,\c_r))$ together with an integer value $\Delta(\h(r,\c_r))$ are assigned to each \allowhook~$\h(r,\c_r)$ in each $\gamma \in \SHF_{\mu/\lambda}$. 
\end{thm}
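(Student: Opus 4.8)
The plan is to carry out, over the skew diagram $\mu/\lambda$, the same determinant-expansion-and-involution argument that proves Theorem~\ref{T:main}. I would begin from the noncommutative Jacobi--Trudi expansion of the skew immaculate function,
\begin{equation*}
\I_{\mu/\lambda}=\sum_{\sigma\in S_k}(-1)^{\sigma}\,H_{\mu_1-\lambda_{\sigma(1)}+\sigma(1)-1}\cdots H_{\mu_k-\lambda_{\sigma(k)}+\sigma(k)-k},
\end{equation*}
read left to right in the immaculate order, with the conventions $H_0=1$ and $H_m=0$ for $m<0$ (so any summand containing a negative index is discarded). The straight-shape formula of Theorem~\ref{T:main} is the special case $\lambda=\varnothing$, and the essential new feature is that each column index is shifted by $-\lambda_{\sigma(r)}$; this is precisely the shift recorded by deleting the $\lambda$-cells, equivalently by passing to the modified boundary $\nu$ of Section~\ref{S:modifynu}, from the diagram.

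Next I would give the combinatorial reading of each surviving summand. For a permutation $\sigma$ whose indices are all nonnegative, the factor $H_{\mu_r-\lambda_{\sigma(r)}+\sigma(r)-r}$ is interpreted as a single ribbon-like strip, a candidate \allowhook~$\h(r,\c_r)$, that begins in the skew row determined by $\mu_r$ and terminates according to $\sigma(r)$, with integer value $\Delta(\h(r,\c_r))$ equal to $\mu_r-\lambda_{\sigma(r)}+\sigma(r)-r$ and $\sign(\h(r,\c_r))$ equal to the local contribution of $\sigma$ to $(-1)^{\sigma}$. This identifies the well-behaved summands with genuine \allowhookfillings~$\gamma\in\SHF_{\mu/\lambda}$, and realizes the remaining summands as configurations in which two candidate hooks either overlap or are stacked in a forbidden order.

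The heart of the argument is a sign-reversing, $\Delta$-preserving involution on the non-\THC~summands, pairing each offending configuration with the one obtained by swapping the two hooks that violate the \allowhook~conditions, i.e.\ by composing $\sigma$ with the appropriate transposition. I would check that this map is an involution, that it reverses $(-1)^{\sigma}$, and---crucially in the noncommutative setting---that it \emph{preserves the ordered product} $\prod_{r}H_{\Delta(\h(r,\c_r))}$ as a word in the $H_i$ rather than merely as a commutative monomial, so that the paired terms cancel in $\NSym$. Since the hook placement is built one strip at a time, this is most cleanly organized as an induction on $k$: removing the outermost \allowhook~reduces $\munu$ from boundary $\nu^{(r-1)}$ to the smaller skew shape $\munuhat$, to which the inductive hypothesis applies, and the iterative construction of \allowhookfillings~described earlier guarantees the leftover shape is again of the skew form handled by the theorem.

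The main obstacle will be controlling the noncommutative order under the involution. In the commutative Eğecioğlu--Remmel setting the cancellation needs only the monomials to agree, but here the swap of two hooks must leave the left-to-right product of the $H_{\Delta}$ factors literally unchanged; ensuring that the involution exchanges only factors that commute for the relevant indices, or otherwise respects the order forced by the immaculate reading, is the delicate point. It is also where the distinction between the Macdonald-style skewing and the Hopf-algebraic skewing operator must be tracked carefully, per Section~\ref{S:modifynu}, since the two conventions alter which $\nu$ indexes the sum $\SHF_{\mu/\nu}$.
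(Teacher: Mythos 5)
Your starting point---the permutation expansion of $\ndet(M_{\mu/\lambda})$ with $(M_{\mu/\lambda})_{i,j}=H_{(\mu_i-i)-(\lambda_j-j)}$---is exactly the paper's Definition~\ref{Def:Imunu}, so that part is fine. The central mechanism you propose, however, rests on a false premise. In this framework there are no ``offending configurations'' to cancel: Proposition~\ref{T:perm} (and its skew analogue) puts \emph{every} permutation $\sigma\in S_k$ in bijection with a \allowhookfilling; the whole point of letting hooks ``tunnel'' into the diagram rather than staying on the rim is that no choice of terminal cells is ever forbidden, so $\SHF_{\mu/\lambda}$ has exactly $k!$ elements and the identity is a term-by-term rewriting of the determinant, not the residue of a cancellation. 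Terms with negative subscripts vanish because $H_m=0$ for $m<0$, not because they are paired off by an involution. Consequently the sign-reversing, word-preserving involution you describe is both unnecessary and, as you yourself suspect, unavailable: composing $\sigma$ with a transposition changes the indices in positions $i$ and $i+1$ of the ordered word $H_{a_1}\cdots H_{a_k}$, so the two paired terms are not equal as elements of $\NSym$ and do not cancel. The ``delicate point'' you flag at the end is a genuine obstruction to your route, not a detail to be checked later.

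What actually needs to be proved---and what your proposal asserts rather than establishes---is that for the skew diagram $D_{\mu/\lambda}$ the \allowhookfillings~are again indexed by $S_k$ with $\Delta(\h(r,\c_r))=\mu_r-\lambda_{\sigma_r}+\sigma_r-r$ and with the product of the hook signs equal to $\sign(\sigma)$. The paper obtains this not by reworking the straight-shape argument on the skew diagram but by a reduction: prepend $\lambda_1$ rows of length $\lambda_1$ to form $\underline{\mu}=((\lambda_1)^{\lambda_1},\mu_1,\ldots,\mu_k)$, force the first $\lambda_1$ \allowhooks~to be the L-shaped hooks terminating at $(\lambda_1+(\lambda^T)_j,j)$, check that the residual partial diagram is exactly $D^{(0)}_{\mu/\lambda}$, and then invoke Proposition~\ref{P:minordet} (which rests on Lemma~\ref{T:countnuj}) to identify the corresponding submatrix of $M_{\underline{\mu}}$ with $M_{\mu/\lambda}$. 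Your sentence asserting that the shift by $-\lambda_{\sigma(r)}$ ``is precisely the shift recorded by deleting the $\lambda$-cells'' is standing in for all of that work. To repair the argument, either carry out this reduction or redo Lemmas~\ref{T:Mrj} and~\ref{T:signs} directly on $D_{\mu/\lambda}$; as written, the proof does not go through.
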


As in Theorem~\ref{T:main}, the product is taken in order from $r=1$ to $k$ since the order of the functions matters in $\NSym$.

Loehr and Niese recently published a combinatorial interpretation of the nonsymmetric inverse Kostka matrix~\cite{LoeNie21}.  Their approach uses transitive tournaments and recursively defined sums, which is quite different from our computationally expedient diagrammatic approach.  

In addition to the tournament approach, Loehr and Niese also provide a diagrammatic method for computing the decomposition of an immaculate into the complete homogeneous basis when the indexing shape is a partition. Our diagrammatic approach works for all indexing shapes, including all compositions and also all sequences of integers.  Our decomposition can be determined directly by looking at the diagram and recording the values of the \allowhooks.

In Section 2 we review important definitions and properties concerning the rings $\SYM$, $\NSYM$, and $\QSym$.  In Section 3 we generalize Ferrers diagrams to provide what we call \emph{\GBPR~diagrams} for sequences of integers skewed by partitions.  We also define 
\allowhooks~and \allowhookfillings, the main objects involved in our combinatorial formulas.
Section 4 contains the proof of Theorem~\ref{T:main}.  
In Section \ref{S:modifynu}
we use determinants of submatrices to generalize Theorem~\ref{T:main} to immaculates indexed by sequences skewed by partitions.  We then extend this further to sequences skewed by arbitrary sequences and
describe the relation between special rim hooks \cite{EgeRem90} and \allowhooks.
In Section \ref{sec:ribbon}
we apply Theorem \ref{T:main}
to extend results of Campbell
\cite{Cam17} regarding the ribbon decompositions
of immaculates.

\section{Foundation and background}

We begin by reviewing necessary material to motivate our work.  There are a number of excellent sources~\cite{BBSSZ14,Cam17,GKLLRT95,Mac95,Sag01} for further background in this area.

\subsection{Classical combinatorial notions}

A \emph{sequence} $\mu$ of length $k$ (denoted $\ell(k)$)
consists of a $k$-tuple of integers
$(\mu_1,\mu_2,\ldots,\mu_k)$
where $\mu_i\in\mathbb{Z}$
for $1 \le i \le k.$
A \emph{weak composition} is a sequence whose entries are all non-negative integers.  A \emph{strong composition} 
 is a weak composition all of whose positive entries appear before any zeros.  Often, we simply use the term \emph{composition} to refer to a strong composition.  A \emph{partition} $\lambda=(\lambda_1,\lambda_2,\ldots,\lambda_k)$
is a composition in which $0 \le \lambda_{i+1} \le \lambda_i$
for $1 \le i \le k-1.$ 
 Write $\mu \models n$ to denote that $\mu$ is a composition of $n$ and $\lambda \vdash n$ to denote that $\lambda$ is a partition of $n$.  Partitions are often represented by  Ferrers diagrams in which
(using French notation) there are $\lambda_i$ boxes (called \emph{cells}) in row $i$
going from the bottom (south) to the top (north).

A \emph{skew shape} is a pair of partitions $\mu/\lambda$ such that $\lambda_i \le \mu_i$ for $1 \le i \le k$. 
The \emph{diagram} of a skew shape $\mu/\lambda$, where $\mu=(\mu_1,\mu_2,\ldots,\mu_k)$ and $\lambda=(\lambda_1,\lambda_2, \hdots , \lambda_k)$,
has
$\mu_i$ cells in row $i$ with 
the left-most $\lambda_i$ cells shaded out.  A cell in row $i$ and column $j$ is indicated by $(i,j)$.  A partition or weak composition $\mu$ is identified with the skew shape $\mu/(0,0, \hdots , 0)$.

\begin{ex}
The diagram of shape $${\mu/\lambda}=(4,3,2,2,1)/(2,2,1,0,0)$$
is given by the labelled cells (row, column) in the following figure.
\vskip .125truein

\hskip 1.46truein
\vbox{
\Yfillcolour{white}
\young({{{{5,1}}}})
\vskip -1.295pt
\young({4,1}{4,2})
\vskip -1.295pt
\Yfillcolour{grey}
\young(\ !<\Yfillcolour{white}>{3,2})
\vskip -1.295pt
\Yfillcolour{grey}
\young(\ \ !<\Yfillcolour{white}>{2,3})
\vskip -1.295pt
\Yfillcolour{grey}
\young(\ \ !<\Yfillcolour{white}>{1,3}{1,4})
\label{Ex:FerrersDiagram}
}
\end{ex}

\begin{Note}
Generally speaking, throughout this paper, 
$\mu$ and $\nu$ denote sequences (which, depending on the context, includes compositions and weak compositions),
while $\lambda$ is strictly used for partitions.
\end{Note}
 
A \emph{semi-standard Young tableau} ($\SSYT$) of shape $\mu/\lambda$ (where $\mu$ and $\lambda$ are partitions) is a filling of the non-shaded cells of the diagram with positive integers, so that the entries weakly increase along rows from left to right (west to east) and strictly increase up columns from bottom to top (south to north).  The set of all semi-standard Young tableaux of shape $\mu/\lambda$ is denoted by $\SSYT(\mu/\lambda)$.  The \emph{weight} $x^T$ of a semi-standard Young tableau $T$ is given by $$x^T = \prod x_i^{\textrm{ \# of times $i$ appears in $T$}}.$$  In the following definition, we suppress the variable set, which can either be finite $\{x_1, x_2, \hdots , x_{\ell}\}$, in which case $\SSYT(\mu/\lambda)$ includes all semi-standard Young tableaux with entries in $\{1,2, \hdots , \ell\}$, or infinite $\{x_1, x_2, \hdots\}$, allowing for all positive integers to appear as entries in the semi-standard Young tableaux in $\SSYT(\mu/\lambda)$.

\begin{defn}
The \emph{Schur function} $s_{\mu/\lambda}$ is defined by $$s_{\mu/\lambda}=\sum_{T \in \SSYT(\mu/\lambda)} x^T.$$
\end{defn}

\subsection{The symmetric group and symmetric functions}


A permutation $\sigma= (\sigma_1, \sigma_2 \hdots , \sigma_n) \in S_n$, written in one-line notation where $S_n$ denotes the symmetric group, acts on a formal power series $f(x_1, x_2, \hdots , x_n)$ in $n$ variables by $$\sigma (f(x_1, \hdots , x_n)) = f(x_{\sigma_1}, x_{\sigma_2}, \hdots , x_{\sigma_n}).$$
The formal power series $f(x_1,x_2,\ldots, x_n)$ in $n$ variables
is said to be symmetric
if
$$\sigma(f(x_1,x_2,\ldots, x_n))
=f(x_1,x_2,\ldots, x_n)
$$
for all $\sigma\in S_n.$
The ring of symmetric functions in $n$ variables,
denoted by $\Sym_n$, consists of all symmetric formal
power series $f(x_1, x_2, \hdots, x_n)$ on commuting variables $x_1, x_2,\hdots , x_n$ with coefficients from a field $\K$.  This notion can be extended to symmetric functions in infinitely many variables; a formal power series $f(x_1, x_2, \hdots)$ is in $\Sym$ if $$\sigma (f(x_1, x_2, \hdots) )= f(x_1, x_2, \hdots)$$ for every permutation $\sigma$ of the positive integers.  Bases for $\Sym$ are indexed by partitions.

Schur functions form an important basis for $\Sym$ since they correspond to characters of irreducible representations of the symmetric group and also to the cohomology of the Grassmannian~\cite{Ful97}.  Skew Schur functions generalize the Schur functions to pairs of indexing partitions while enjoying many properties similar to those of the Schur functions indexed by partitions~\cite{Mac95}.

Two other useful bases for $\Sym$ are the \emph{complete homogeneous symmetric functions $h_{\lambda}$} and the \emph{elementary symmetric functions} $e_{\lambda}$ where $\lambda=(\lambda_1,\ldots,\lambda_k)$ is a partition.  
One way to define $h_{\lambda}$ is to set $h_n=s_n$ (for $n \in \mathbb{Z}^+$) and $h_{\lambda} = h_{\lambda_1} h_{\lambda_2} \cdots h_{\lambda_k}$.
Similarly, set $e_n=s_{1^n}$ (for $n \in \mathbb{Z}^+$) and $e_{\lambda} = e_{\lambda_1} e_{\lambda_2} \cdots e_{\lambda_k}$ for $\lambda=(\lambda_1 , \lambda_2 , \hdots, \lambda_k)$.  The \emph{Jacobi-Trudi Formula} provides a rule for expanding the Schur 
functions $s_{\lambda/\nu}$ into the complete homogeneous symmetric functions.

\begin{theorem}[Jacobi-Trudi Formula]{\cite{Mac95}}
Let $\lambda/\nu$ be a skew shape.  Then 
\begin{equation}
s_{\lambda/\nu}=\det(h_{\lambda_i-i-(\nu_j-j)})_{i,j}
\label{E:SymJacobiTrudi}\end{equation}\label{T:SymJacobiTrudi}
\end{theorem}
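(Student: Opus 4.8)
The plan is to prove the Jacobi--Trudi formula combinatorially via the Lindstr\"om--Gessel--Viennot (LGV) lemma on families of non-intersecting lattice paths, since this approach matches the combinatorial spirit of the paper. First I would expand the determinant over the symmetric group, writing
\[
\det(h_{\lambda_i-i-(\nu_j-j)})_{i,j}=\sum_{\sigma\in S_k}\operatorname{sgn}(\sigma)\prod_{i=1}^{k}h_{\lambda_i-i-(\nu_{\sigma(i)}-\sigma(i))},
\]
using the conventions $h_0=1$ and $h_m=0$ for $m<0$ so that every term containing a negative index simply vanishes.

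Next I would realize each factor as a lattice-path generating function. Consider paths in $\mathbb{Z}^2$ built from unit north and east steps, weighting an east step taken at height $y$ by the variable $x_y$. Placing sources $A_j=(\nu_j-j,\,0)$ along the bottom line and sinks $B_i=(\lambda_i-i,\,N)$ along a high line $y=N$, the horizontal displacement of a path from $A_{\sigma(i)}$ to $B_i$ is exactly $\lambda_i-i-(\nu_{\sigma(i)}-\sigma(i))$. Since the east-step heights along a single monotone path form a weakly increasing sequence, the total weight of all such paths is precisely the corresponding $h_m$. Thus the permutation expansion above equals the signed generating function over $k$-tuples of paths $(P_1,\dots,P_k)$ with $P_i$ running from $A_{\sigma(i)}$ to $B_i$.

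I would then apply LGV. Define a sign-reversing, weight-preserving involution that, given any family in which two paths cross, swaps their tails beyond the first (say lexicographically lowest) intersection vertex; this cancels every intersecting family and leaves only pairwise non-intersecting families. Because $\lambda$ and $\nu$ are partitions with $\lambda_i\ge\nu_i$, the sources $A_1,\dots,A_k$ and sinks $B_1,\dots,B_k$ are each strictly ordered along their lines, and a non-intersecting family must connect them in an order-preserving way, forcing $\sigma=\mathrm{id}$ with positive sign. Finally I would biject the surviving non-intersecting families with $\SSYT(\lambda/\nu)$: reading the east-step heights of $P_i$ from left to right gives the entries of row $i$, the weak increase along each path yields weakly increasing rows, and the non-intersecting condition between consecutive paths translates exactly into strict increase up columns. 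The weight of a family is then $x^T$, and summing yields $\sum_{T\in\SSYT(\lambda/\nu)}x^T=s_{\lambda/\nu}$.

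The main obstacle is the coordinate bookkeeping together with the verification that the non-intersecting condition is \emph{equivalent} to column-strictness of the tableau (not merely implied by it), so that the bijection is genuinely onto $\SSYT(\lambda/\nu)$; one must also confirm that negative indices correspond to impossible (empty) path sets, consistent with $h_m=0$. A secondary point requiring care is that the tail-swapping involution is well defined, which hinges on selecting a canonical intersection vertex in a consistent manner so that applying the map twice returns the original family.
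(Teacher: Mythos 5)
The paper does not prove this statement: it is quoted from \cite{Mac95} as classical background, so there is no internal argument to compare against. Your Lindstr\"om--Gessel--Viennot proof is the standard combinatorial derivation of the Jacobi--Trudi identity and is essentially correct. The permutation expansion, the identification of $h_{\lambda_i-i-(\nu_{\sigma(i)}-\sigma(i))}$ with the generating function for monotone lattice paths from $A_{\sigma(i)}=(\nu_{\sigma(i)}-\sigma(i),0)$ to $B_i=(\lambda_i-i,N)$, the cancellation of intersecting families, and the bijection between non-intersecting families and $\SSYT(\lambda/\nu)$ are all sound; since $\nu_j-j$ and $\lambda_i-i$ are strictly decreasing, only $\sigma=\mathrm{id}$ survives, as you say. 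The two points you flag do need care but are not gaps: the involution must be made canonical (act on the smallest index $i$ whose path meets another, at the first intersection vertex along $P_i$, paired with the smallest-indexed path through that vertex, and one checks the same vertex is selected after the swap), and the east-step heights should be normalized to lie in $\{1,\dots,N\}$ (e.g., shift the sources to height $1$ or forbid east steps on the source line) so the path weights are exactly $h_m(x_1,\dots,x_N)$, with $m<0$ yielding the empty path set consistently with $h_m=0$. One contextual remark: your argument establishes the forward expansion of $s_{\lambda/\nu}$ into the $h$ basis, whereas the combinatorial machinery the paper itself develops (tunnel hooks, in the spirit of E\u{g}ecio\u{g}lu--Remmel) concerns the inverse transition matrix; Theorem~\ref{T:SymJacobiTrudi} is taken as given input there, so your proof supplements rather than replaces anything in the paper.
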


The inverse Kostka matrix is the transition matrix from the Schur function basis to the complete homogeneous symmetric function basis.   E\u{g}ecio\u{g}lu and Remmel's combinatorial interpretation of the inverse Kostka matrix~\cite{EgeRem90} provides a method for writing Schur functions in terms of complete homogeneous symmetric functions using decompositions of the indexing shapes into collections of cells called \emph{special rim hooks}.  A \emph{rim hook tabloid} is constructed by repeated removal of special rim hooks from the diagram of a partition.  Each rim hook tabloid has an associated type and a sign, and these are used to determine the coefficients appearing in the expansion of Schur functions into the complete homogeneous symmetric functions.  (See Section~\ref{sec:schur} for further details.)

\subsection{Quasisymmetric  and noncommutative symmetric functions}

Quasisymmetric functions generalize symmetric functions.  They first appear in the work of Stanley~\cite{Sta72} and are formally developed by Gessel~\cite{Ges84}.
A polynomial (or more generally a bounded degree formal power series on an infinite alphabet $x_1, x_2, \hdots$) $f$ is \emph{quasisymmetric} if the coefficient of 
$x_1^{\alpha_1} x_2^{\alpha_2} \cdots x_k^{\alpha_k}$ in $f$ is equal to the coefficient of $x_{j_1}^{\alpha_{1}} x_{j_2}^{\alpha_{2}} \cdots x_{j_k}^{\alpha_{k}}$ 
in $f$ for any sequence $j_1< j_2 < \hdots < j_k$ of positive integers and any composition $(\alpha_1, \hdots, \alpha_k)$.
Quasisymmetric functions form the terminal object in the category of combinatorial Hopf algebras~\cite{ABS06}.

Bases for the algebra $\QSym$ of quasisymmetric functions 
over a field $\K$
are indexed by compositions  
$\alpha=(\alpha_1, \alpha_2, \hdots, \alpha_k)$. 
The \emph{monomial quasisymmetric function} $M_{\alpha}$ is obtained by taking the monomial $x^{\alpha}=x_1^{\alpha_1}\cdots x_k^{\alpha_k}$ and ``quasisymmetrizing" it.  That is, $$M_{\alpha} = \sum_{j_1 < j_2 < \cdots < j_k} x_{j_1}^{\alpha_1} x_{j_2}^{\alpha_2} \cdots x_{j_k}^{\alpha_k}.$$

The \emph{fundamental quasisymmetric functions}
are a basis for $\QSym$ which can be defined in terms of monomial quasisymmetric functions.  In particular, $$F_{\alpha} = \sum_{\beta \succeq \alpha} M_{\beta}$$
where $\beta=(\beta_1,\beta_2,\ldots,\beta_j)$,
$\alpha=(\alpha_1,\alpha_2,\ldots,\alpha_k)$,
and $\succeq$ is the refinement order.  (Recall
$\beta \succeq \alpha$ means that there exists a sequence
$(g_1,\ldots,g_j)$ with $g_m<g_{m+1}$ for $1 \le m \le j-1$ such that
$\beta_i=\alpha_{g_{i-1}+1}+\cdots+\alpha_{g_{i}}$
for $1 \le i \le j$ with $g_0=0.$)

Each Schur function decomposes into a positive sum of fundamental quasisymmetric functions based on a statistic called the \emph{descent set}~\cite{Sta99}.  Fundamental quasisymmetric functions correspond to characters of irreducible representations of the $0$-Hecke algebra~\cite{DKLT96}.


The Hopf algebra $\NSym$ is the dual of $\QSym$~\cite{GKLLRT95}.  Recall $\NSym$ can be thought of as the free associative algebra $\K \langle H_1, H_2 , \hdots \rangle$ generated by algebraically independent, noncommuting complete homogeneneous symmetric functions $H_i$ over a fixed commutative field $\K$ of characteristic zero.  Set $H_a:=0$ if $a$ is a negative integer and $H_0:=1$. 
 Then $H_{(\alpha_1, \alpha_2, \hdots, \alpha_{k})} = H_{\alpha_1}H_{\alpha_2} \cdots H_{\alpha_{k}}$ for $\alpha_i\in\mathbb{Z}.$  The collection of complete homogeneous noncommutative symmetric functions indexed by compositions forms a basis for $\NSym$. 

\subsection{Creation operators and immaculate functions}

The Schur function basis for $\Sym$ can be defined using creation operators.  Let ${\bf B}_m: \Sym_n \rightarrow \Sym_{m+n}$ be the \emph{Bernstein operator} defined by $${\bf B}_m := \sum_{i \ge 0} (-1)^i h_{m+i} e_i^{\perp},$$ in which $e_i^\perp$ is the adjoint operator defined by $$\langle e_i g, h \rangle = \langle g, e_i^{\perp} h \rangle \; \; \textrm{ for all } g,h \in \Sym,$$ where $\langle,\rangle$ is the inner product on $\Sym$ for which the Schur functions are self-dual.

For any sequence $\mu \in \mathbb{Z}^k$, the Schur function $s_{\mu}$ can be constructed by repeated application of Bernstein operators \cite{Zel81}; 
specifically,
$$s_{\mu} = {\bf B}_{\mu_1}{\bf B}_{\mu_2}   \cdots {\bf B}_{\mu_k}(1).$$
These creation operators can be extended to $\NSym$ by replacing $h_{m+i}$ with the complete homogeneous non-commutative symmetric function $H_{m+i}$ and replacing $e_i^{\perp}$ with the linear transformation $F_{1^i}^{\perp}$ of $\NSym$ that is adjoint to multiplication by the fundamental $F_{1^i}$ in $\QSym$.  This construction leads to the following definition.

\begin{defn}{\cite{BBSSZ14}}  The \emph{noncommutative Bernstein operator} $\mathbb{B}_m$ is given by
$$\mathbb{B}_m=\sum_{i \ge 0} (-1)^i H_{m+i} F_{1^i}^{\perp}.$$
\end{defn}

The immaculate functions $\I_\mu$ are then defined analogously to the Schur functions by the following application of creation operators.

\begin{defn}{\cite{BBSSZ14}}  The \emph{immaculate noncommutative symmetric function $\I_{\mu}$} is defined by
$$\I_\mu=\mathbb{B}_{\mu_1} \cdots \mathbb{B}_{\mu_k}(1).$$
\label{D:Immacdef}
\end{defn}

\noindent Immaculate functions correspond to indecomposable modules of the $0$-Hecke algebra~\cite{BBSSZ15}.  The following $\NSym$ version of the Jacobi-Trudi Theorem provides a determinantal formula for the expansion of the immaculate functions $\I_\mu$ into the complete homogeneous basis for $\NSym$.

\begin{theorem}[\cite{BBSSZ14}]
For an arbitrary integer sequence $\mu$, let $M_\mu$ be the matrix given by $(M_{\mu})_{i,j}=H_{\mu_i+j-i}$.  Then
$\I_\mu=\ndet(M_\mu).$
\label{T:NsymJacobiTrudi}
\end{theorem}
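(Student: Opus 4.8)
The plan is to induct on the length $k$ of $\mu$, peeling off the leftmost operator $\mathbb{B}_{\mu_1}$ and matching the result against the cofactor expansion of $\ndet(M_\mu)$ along its first row. Throughout I read $\ndet$ as the \emph{row-ordered} noncommutative determinant $\ndet(A)=\sum_{\sigma}\sign(\sigma)\,A_{1,\sigma(1)}A_{2,\sigma(2)}\cdots A_{k,\sigma(k)}$, with the factors written left to right in increasing row order. Since the factor $A_{1,\sigma(1)}$ always appears first, the Laplace expansion along the first row holds verbatim: writing $M^{(1,j)}$ for the minor obtained by deleting row $1$ and column $j$, we have $\ndet(M_\mu)=\sum_{j=1}^k(-1)^{j-1}H_{\mu_1+j-1}\,\ndet(M^{(1,j)})$, because $(M_\mu)_{1,j}=H_{\mu_1+j-1}$ factors out on the left. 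The base case $k=1$ is immediate: $\I_{(\mu_1)}=\mathbb{B}_{\mu_1}(1)=H_{\mu_1}=\ndet(M_\mu)$. Nothing below uses that $\mu$ is a partition, so the argument applies to arbitrary $\mu\in\mathbb{Z}^k$ via the conventions $H_a=0$ for $a<0$ and $H_0=1$.

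First I would record the action of the skewing operator on a monomial. Writing $\hat\mu=(\mu_2,\dots,\mu_k)$ and $M'$ for the $(k-1)\times(k-1)$ matrix of $\hat\mu$, Definition~\ref{D:Immacdef} gives $\I_\mu=\mathbb{B}_{\mu_1}\I_{\hat\mu}=\sum_{i\ge0}(-1)^iH_{\mu_1+i}\,F_{1^i}^\perp(\I_{\hat\mu})$, and by induction $\I_{\hat\mu}=\ndet(M')$. Using that $\Delta(H_n)=\sum_{p+q=n}H_p\otimes H_q$ is multiplicative and that $\{H_\alpha\}$ is dual to $\{M_\alpha\}$, a direct computation gives $F_{1^i}^\perp(H_{c_1}\cdots H_{c_\ell})=\sum_{\substack{S\subseteq\{1,\dots,\ell\}\\ |S|=i}}\prod_{a=1}^\ell H_{c_a-\delta_{a\in S}}$; that is, $F_{1^i}^\perp$ is the noncommutative analogue of $e_i^\perp$ and merely subtracts $1$ from $i$ distinct factors, in all possible ways. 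This is the only place Hopf-algebraic input is needed.

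The crux is a determinant identity: for $0\le t\le k-1$,
$$F_{1^t}^\perp(\ndet M')=\ndet\big(M'_{\downarrow t}\big),$$
where $M'_{\downarrow t}$ is $M'$ with the $H$-index in each of its first $t$ columns lowered by $1$. To prove it I would expand $\ndet M'$ over $\sigma\in S_{k-1}$ and apply the monomial formula above; the subset $S$ of decremented factors (rows) converts, via the bijection $S\mapsto\sigma(S)$, into a subset $T$ of \emph{columns}, giving $F_{1^t}^\perp(\ndet M')=\sum_{|T|=t}\ndet(M'_{\downarrow T})$, summed over all ways to lower $t$ columns. Now the structural feature enters: since $(M')_{a,b}=H_{\mu_{a+1}+b-a}$, lowering the index in column $b$ turns it into the original column $b-1$. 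Because the row-ordered $\ndet$ vanishes whenever two columns coincide — a sign-reversing involution swaps the two rows mapped onto the equal columns and leaves the ordered product unchanged — every $T$ that is not an initial segment $\{1,\dots,t\}$ creates two equal columns and contributes $0$, leaving only $T=\{1,\dots,t\}$. This collapses the sum to the single term $\ndet(M'_{\downarrow t})$.

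Finally I would identify $M'_{\downarrow t}$ with the minor $M^{(1,t+1)}$: deleting row $1$ and column $t+1$ from $M_\mu$ and reindexing shows the surviving columns $1,\dots,t$ carry indices one smaller than in $M'$, while columns $t+1,\dots,k-1$ agree with $M'$, so $M^{(1,t+1)}=M'_{\downarrow t}$. Substituting $F_{1^t}^\perp(\ndet M')=\ndet(M^{(1,t+1)})$ into the expansion of $\mathbb{B}_{\mu_1}$ and setting $j=t+1$ yields $\I_\mu=\sum_{t\ge0}(-1)^tH_{\mu_1+t}\ndet(M^{(1,t+1)})=\sum_{j=1}^k(-1)^{j-1}H_{\mu_1+j-1}\ndet(M^{(1,j)})=\ndet(M_\mu)$, completing the induction. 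I expect the main obstacle to be the third-paragraph identity: both the reindexing of the decremented factors from rows to columns and the vanishing-of-equal-columns argument for the \emph{noncommutative} $\ndet$ require care, since the latter depends essentially on the factors being ordered by row, so that swapping two columns with identical entries genuinely preserves the ordered product.
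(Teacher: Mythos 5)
Your proof is correct, but there is nothing in the paper to compare it against: Theorem~\ref{T:NsymJacobiTrudi} is imported from \cite{BBSSZ14} and the paper gives no proof of it, using it only as the starting point for the combinatorics of \allowhookfillings. What you have written is essentially a reconstruction of the original argument in \cite{BBSSZ14}: peel off $\mathbb{B}_{\mu_1}$, use the fact that $F_{1^i}^{\perp}$ subtracts $1$ from $i$ distinct $H$-factors in all possible ways, and observe that decrementing a non-initial set of columns of the Jacobi--Trudi minor produces two equal columns, which kills the row-ordered $\ndet$ via the sign-reversing swap you describe. The two points you flag as delicate are exactly the right ones, and both go through: the row-to-column reindexing $S \mapsto \sigma(S)$ is a bijection for each fixed $\sigma$, and the equal-\emph{columns} vanishing (not equal rows, which fails in $\NSym$) is what the row-ordered convention buys you. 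One small caution: your computation of $F_{1^i}^{\perp}$ uses the standard convention $F_{1^i}=M_{1^i}$ (the $e_i$-analogue), which is what \cite{BBSSZ14} intends; the present paper's displayed formula $F_{\alpha}=\sum_{\beta\succeq\alpha}M_{\beta}$ with $\succeq$ read as coarsening would instead give $F_{1^i}=\sum_{\beta\models i}M_{\beta}$, under which your collapse to a single minor would fail --- so you should state explicitly which convention you are using. Also worth a sentence in a polished write-up: when some $c_a\le 0$ the monomial formula for $F_{1^i}^{\perp}$ remains valid because the spurious terms carry a factor $H_{-1}=0$, which you implicitly rely on since $\mu$ is an arbitrary integer sequence.
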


\noindent
The above formula for $\I_\mu$ requires that
the determinant be computed using Laplace expansion starting with the top row and then continuing down to the bottom row.
We will use $\ndet(M_\mu)$ instead of $\det(M_\mu)$
to represent this strict expansion process.

\begin{ex}
If $\mu=(-1,3,2)$, then $$M_{\mu} = \begin{bmatrix} 
H_{-1} & H_0 & H_1  \\ 
H_{2} & H_{3} & H_4  \\ 
H_{0} & H_{1} & H_2  
\end{bmatrix}.$$  
With $H_a=0$ if $a \in \mathbb{Z}^{-}$ and $H_0=1$, the decomposition of $\I_{\mu}$ into the complete homogeneous basis for $\NSym$ is given by $$\I_{\mu}=H_{(4)}-H_{(2,2)}+H_{(1,2,1)} - H_{(1,3)}.$$
\end{ex}




\section{\GBPR~diagrams and \allowcells}{\label{Sec:gbpr}}

Skew diagrams, as traditionally defined, do not extend to the full generality necessary for our work.  
For our diagrams, we will need to consider shapes $\mu/\nu$ where $\mu$ is an integer
sequence (which in particular might contain negative parts) and $\nu$ is a partition.  Allowing $\mu$ to have negative parts requires modifying the typical diagram visualization.

For all of Section~\ref{Sec:gbpr}, let $\mu=(\mu_1,\ldots,\mu_k)$ be an integer
sequence  and
$\nu=(\nu_1,\ldots,\nu_k)$ be a partition.
The \emph{Grey-Blue-Purple-Red (\GBPR) diagram} $D_{\munu}$ for the skew shape ${\munu}$ is obtained via the following process. 

\begin{enumerate}
\item Place $\nu_i$ grey cells in row $i$ of the diagram (for $1 \le i \le k$), working from bottom to top (to stay consistent with French notation).

\item For each $1 \le i \le k$, there are several cases that impact the placement of red and blue cells into row $i$.
\begin{enumerate}
\item If $\mu_i >0$ and $\nu_i \le \mu_i,$ place $\mu_i-\nu_i$ blue cells in row $i$ situated immediately to the right of the grey cells. 

\item If $\mu_i>0$ and $\mu_i < \nu_i,$ place $\nu_i-\mu_i$ red cells in row $i$ situated immediately to the right of the grey cells.

\item If $\mu_i \le 0,$ place $|\mu_i|+\nu_i$ red cells  in row $i$ situated immediately to the right of the grey cells.
\end{enumerate}

\item Any cell in the first quadrant that is not colored grey, red, or blue is \textit{purple}, but we do not typically color these in illustrations since there are infinitely many purple cells.  Purple cells are considered part of the diagram and are available to be claimed (converted to grey or red) as necessary in the following applications.
Often, we will draw a purple cell 
in cell location $(i,\nu_i+1)$ 
when the row does not have any blue or red cells just to be clear 
that the the cell is available for claiming.
\end{enumerate}

Although the \GBPR~diagram does not necessarily contain exactly $\mu_i$ cells in row $i$, the value of $\mu_i$ can be determined from the number of grey, blue, and red cells in the following way.

\begin{lem}\label{lem:gbprsum}
If row $i$ of a \GBPR~diagram has
$a_i$ grey cells, $b_i$ blue cells,  
and $c_i$ red cells, then $a_i+b_i-c_i=\mu_i.$
\end{lem}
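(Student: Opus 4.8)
The plan is to argue directly from the three-case construction of the \GBPR~diagram, verifying the identity $a_i+b_i-c_i=\mu_i$ separately in each case. The first observation is that, regardless of the sign of $\mu_i$, step (1) of the construction always places exactly $\nu_i$ grey cells in row $i$, so $a_i=\nu_i$ in every case. The purple cells play no role, since they are neither grey, blue, nor red and hence never contribute to $a_i$, $b_i$, or $c_i$; this lets me ignore them entirely and reduces the statement to tracking only the blue and red counts produced by step (2).

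Next I would note that the three subcases of step (2) are exhaustive and mutually exclusive: if $\mu_i>0$ then exactly one of $\nu_i\le\mu_i$ (case (a)) or $\mu_i<\nu_i$ (case (b)) holds, and if $\mu_i\le 0$ then case (c) applies. Thus each row falls into precisely one case, and it suffices to compute $a_i+b_i-c_i$ under each.

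The computation is then a short substitution. In case (a) we have $b_i=\mu_i-\nu_i$ and $c_i=0$, so $a_i+b_i-c_i=\nu_i+(\mu_i-\nu_i)=\mu_i$. In case (b) we have $b_i=0$ and $c_i=\nu_i-\mu_i$, so $a_i+b_i-c_i=\nu_i-(\nu_i-\mu_i)=\mu_i$. In case (c), where $\mu_i\le 0$ gives $|\mu_i|=-\mu_i$, we have $b_i=0$ and $c_i=|\mu_i|+\nu_i=-\mu_i+\nu_i$, so $a_i+b_i-c_i=\nu_i-(-\mu_i+\nu_i)=\mu_i$. In all three cases the identity holds, completing the proof.

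Since the argument is a direct case check, there is no genuine obstacle. The only point requiring care is case (c), where one must rewrite $|\mu_i|$ as $-\mu_i$ using $\mu_i\le 0$ and confirm that it is the red cells (not blue cells) that absorb both the grey contribution $\nu_i$ and the magnitude of the negative part of $\mu_i$.
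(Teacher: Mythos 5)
Your proof is correct and follows essentially the same case-by-case verification as the paper, which observes $a_i=\nu_i$ throughout and checks the identity under each subcase of Step 2 (the paper merges cases (b) and (c) since both give $c_i=\nu_i-\mu_i$, exactly the rewriting of $|\mu_i|+\nu_i$ you carry out explicitly). No gaps.
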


\begin{proof}
To see this, consider each of the cases in Step 2 of the \GBPR~diagram construction.  In all cases, $a_i=\nu_i$.

In case (a), $b_i=\mu_i-\nu_i$ and $c_i=0$, so $$a_i+b_i-c_i=\nu_i+\mu_i-\nu_i-0=\mu_i.$$

In cases (b) and (c), 
$b_i=0$ and $c_i=\nu_i-\mu_i$, so $$a_i+b_i-c_i=\nu_i+0-(\nu_i-\mu_i)=\mu_i.$$  Therefore in all cases, $\mu_i=a_i+b_i-c_i$.
\end{proof}

Let $C_{\munu}$ denote the collection of red and blue cells in a diagram $D_{\munu}$.
 
\begin{ex}
The following is the diagram $D_{\munu}$ where \\
$\mu=(-3,1,-1,0,3,-2)$ and 
$\nu=(2,2,1,0,0,0)$ with the corresponding location indicated in each cell, respectively.
\sa
\hskip .72truein
\vbox{
\Yfillcolour{lightred}
\young({6,1}{6,2},)
\vskip -1.295pt
\Yfillcolour{lightblue}
\young({5,1}{5,2}{5,3},)
\vskip -1.295pt
\Yfillcolour{lightpurple}
\young({4,1},)
\vskip -1.295pt
\Yfillcolour{grey}
\young({3,1}!<\Yfillcolour{lightred}>{3,2}{3,3},)
\vskip -1.295pt
\Yfillcolour{grey}
\young({2,1}{2,2}!<\Yfillcolour{lightred}>{2,3},)
\vskip -1.295pt
\Yfillcolour{grey}
\young({1,1}{1,2}!<\Yfillcolour{lightred}>{1,3}{1,4}{1,5}{1,6}{1,7})
\label{Ex:negfour}
}

\sa
\noindent
Note
$$C_{\nu}=\{(1,1),(1,2),(2,1),(2,2),(3,1)\}.$$ 
and
$$C_{\munu}=\{(1,3),(1,4),(1,5),(1,6),(1,7),(2,3),(3,2),(3,3),
(5,1), $$ $$(5,2), (5,3),(6,1),(6,2)\}.$$ 
\end{ex}

\begin{defn}{\label{def:connected}}
Two cells $(p,q)$ and $(r,s)$ are said to be
adjacent if and only if $|p-r|+|q-s|=1.$
We say that a collection $C$ of cells is \emph{connected} if for any cells $c, d \in C$
there is a sequence $c=\c_1, \c_2, \ldots, \c_j=d$ where $\c_i\in C$ and $\c_i$ and $\c_{i+1}$ are adjacent for $1 \le i \le j-1.$  A set consisting of just one cell is also considered to be connected.  Cells $(p,q)$ and $(r,s)$ are \emph{diagonally adjoining} if both $r=p+1$ and $s=q+1$ or both $p=r+1$ and $q=s+1$.
\end{defn}

At times, it will be convenient to construct diagrams for shapes obtained via prefix removal.  In particular, at times we encounter a pair of sequences $\mu \in \mathbb{Z}^k$, and $\nu \in \mathbb{Z}_{\ge 0}^k$ such that $\nu_{r+1} \ge \nu_{r+2} \ge \hdots \ge \nu_k$.  If the first $r$ parts of $\nu$ are not weakly decreasing, we can remove them and consider parts $r+1$ through $k$ of $\mu$ and $\nu$, which allows us to still construct a \GBPR~diagram for the remaining rows.  The following definition makes this precise. 

\begin{defn}
Let $r\in\mathbb{Z}_{\ge 0}, \mu \in \mathbb{Z}^k$, and $\nu \in \mathbb{Z}_{\ge 0}^k$ such that $\nu_{r+1} \ge \nu_{r+2} \ge \hdots \ge \nu_k$.  The \emph{partial diagram} $D^{(r)}_{\mu/\nu}$ is obtained by constructing the \GBPR~diagram $D_{(\mu_{r+1}, \mu_{r+2} , \hdots , \mu_k)/(\nu_{r+1}, \nu_{r+2} , \hdots,  \nu_k)}$ and then shifting the resulting diagram up by $r$ rows, so that the first nonempty row is in row $r+1$ of the plane.
\end{defn}

The row labels for a shifted diagram correspond to their placement in the $xy$-plane.  Note that any \GBPR~diagram can be considered as a partial diagram by setting $r=0$; that is, $$D_{\mu/\nu} = D^{(0)}_{\mu/\nu}.$$

\begin{ex}
With $D_{\mu/\nu}$
given in Example \ref{Ex:negfour},
$D_{\mu/\nu}^{(2)}$ is given by

\hskip 1.75truein
\vbox{
\Yfillcolour{lightred}
\young({6,1}{6,2},)
\vskip -1.295pt
\Yfillcolour{lightblue}
\young({5,1}{5,2}{5,3},)
\vskip -1.295pt
\Yfillcolour{lightpurple}
\young({4,1},)
\vskip -1.295pt
\Yfillcolour{grey}
\young({3,1}!<\Yfillcolour{lightred}>{3,2}{3,3},)
}
\end{ex}

We now define \emph{\bordercells}, which intuitively are the cells which lie on the boundary of $\nu$.  In other words, a boundary cell is either horizontally or vertically adjacent to at least one cell in $\nu$, or diagonally adjoining a cell in $\nu$ (as in Definition \ref{def:connected}).  Any red or blue cell in row $r+1$ of the plane in the partial diagram $D^{(r)}_{\mu/\nu}$ is also a boundary cell.  
Boundary cells will be used to construct hooks that will play a role similar to the role of \emph{rim hooks} in the combinatorial interpretation of the symmetric inverse Kostka matrix~\cite{EgeRem90}.

\begin{defn}
\label{Def:Bordcells}
Let $\mu \in \mathbb{Z}^k$ and let $r$ be a positive integer such that $1 \le r \le k$.  Let $\nu \in \mathbb{Z}_{\ge 0}^k$ such that $(\nu_r, \nu_{r+1}, \hdots , \nu_k)$ is a partition.  A cell in location $(p,q)$ (with $r \le p \le k$) 
is a \emph{\bordercell}~of $D_{\munu}^{(r-1)}$ iff $$\nu_p+1 \le q\le\nu_{p-1}+1 \; \; \textrm{ (for } p>r)$$
and 
$$\nu_r+1 \le q\le\max\{\nu_r+1, a_r+b_r+c_r\} \; \; \textrm{ (if } p=r).$$
Recall $a_r,$ $b_r$ and $c_r$ are the number of grey cells, blue cells and red cells in row $r$, respectively. 
A \emph{\allowcell} $(p,q)$ of $D^{(r-1)}_{\munu}$
is a \bordercell~such that $q=\nu_p+1.$  
\end{defn}


\noindent 
Note that \bordercells~are not necessarily
cells of $C_{\munu}$ (i.e., \bordercells~may
be red, blue, or purple).  The inequality 
$\nu_r+1\le q\le\max\{\nu_r+1, a_r+b_r+c_r\}$ for row $r$ forces the cell $(r,\nu_r+1)$, as well as all red or blue cells in row $r$, to be boundary cells.  Also note that a cell $(p,q)$ with $p<r$ cannot be a boundary cell of $D^{(r)}_{\mu/\nu}$.

Let $\mathcal{B}_{\munu}^{(r)}$ 
and $\mathcal{T}_{\munu}^{(r)}$
denote the collections of \bordercells~and \allowcells~of $D_{\munu}^{(r)}$, respectively.  Let $\mathcal{N}_{\munu}^{(r)}$ be the set difference $$\mathcal{N}_{\munu}^{(r)}:=\mathcal{B}_{\munu}^{(r)}
\setminus\mathcal{T}_{\munu}^{(r)}.$$ 

\begin{defn}
\label{D:tunhookdef}
Let $\c=(p,q) \in \mathcal{T}_{\munu}^{(r-1)}$ be a \allowcell.  A \allowhook~on $D_{\munu}^{(r-1)}$ is a collection $\h(r,\c)$ consisting of all boundary cells in rows $r$ through $p$. 
 The cell $\c$ (the farthest northwest cell of $\h(r,\c)$) is called the \textit{terminal cell} of $\h(r,\c)$.  The \emph{sign} of \allowhook~$\h(r,\c)$, denoted by $\sign(\h(r,\c)),$ equals $(-1)^{p-r}$.  If cell
$\hat c\in \h(r,\c)$ then we say that $\h(r,\c)$ \emph{covers}  $\hat c$.
\end{defn}

There are a number of hook/strip objects in the literature such as skew hooks, ribbons, and border strips~\cite{Mac95}, and the rim hooks~\cite{EgeRem90} appearing in the combinatorial interpretation of the inverse Kostka matrix in $\Sym$.  In order to generalize these combinatorial objects to the $\NSym$ setting, \allowhooks~need to ``tunnel" into the diagram instead of remaining on the rim.  See Section~\ref{sec:schur} for a detailed comparison of rim hooks and \allowhooks.

\begin{ex}
Let $\mu=(5,4,-4,3,-2,5,3)$, $\nu=(5,4,2,2,2,1,0)$, and $r=2$.  Since $(\nu_3,\nu_4, \hdots, \nu_7)$ is a partition, we have the following \GBPR~diagram $D^{(2)}_{\mu/\nu}$ in which we have selected a \allowhook~$$\h(3,(6,2))=\{(3,8),(3,7),(3,6),(3,5),(3,4),(3,3),$$
$$(4,3),(5,3),(6,3),(6,2)\}.$$

\vskip -.125truein
\hskip .45truein
\begin{picture}(300,75)
\put(0,0){
\vbox{
\Yfillcolour{grey}
\young(
!<\Yfillcolour{lightblue}>\ \ \    )
\vskip -1.295pt
\young(\ !<\Yfillcolour{lightblue}>\ \ \ \ )
\vskip -1.295pt
\Yfillcolour{grey}
\young(\  \ !<\Yfillcolour{lightred}>\ \ \ \ )
\vskip -1.295pt
\Yfillcolour{grey}
\young(\  \ !<\Yfillcolour{lightblue}>\   )
\vskip -1.295pt
\Yfillcolour{grey}
\young(\  \ !<\Yfillcolour{lightred}>\ \ \ \ \ \ )
}
}


\put(90,3){\line(1,0){152}}
\put(90,3){\line(0,1){39}}
\put(60,42){\line(1,0){30}}

\end{picture}

\vskip 5pt

\noindent
The \bordercells~in the above diagram are $$\mathcal{B}_{\munu}^{(2)}=\{(3,3),(3,4),(3,5),(3,6),(3,7),(3,8),$$
$$(4,3),(5,3),(6,2),(6,3),(7,1),(7,2)\},$$
the \allowcells~are
$$\mathcal{T}_{\munu}^{(2)}=\{(3,3),(4,3),(5,3),(6,2),(7,1)\},$$
and $$\mathcal{N}_{\munu}^{(2)}=\{(3,4),(3,5),(3,6),(3,7),(3,8),(6,3),(7,2) \}.$$
\label{Ex:munu}
\end{ex}

The following lemmas will be useful in later proofs. 

\begin{lem}{\label{lem:bdrydiag}}
If $(p,q) \in \mathcal{B}^{(r)}_{\munu}$, then $(p+1,q+1) \notin \mathcal{B}^{(r)}_{\munu}$.
\end{lem}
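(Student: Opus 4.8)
The plan is to read the result directly off the defining inequalities of Definition~\ref{Def:Bordcells}, by comparing the constraint that places $(p,q)$ into row $p$ with the constraint that would be needed to place $(p+1,q+1)$ into row $p+1$. The single observation driving everything is that membership of $(p,q)$ in $\mathcal{B}^{(r)}_{\munu}$ forces the lower column bound $q \ge \nu_p+1$, whereas membership of $(p+1,q+1)$ in $\mathcal{B}^{(r)}_{\munu}$ would force the upper column bound $q+1 \le \nu_{(p+1)-1}+1 = \nu_p+1$, i.e. $q \le \nu_p$. Since $q \ge \nu_p+1$ and $q \le \nu_p$ cannot hold simultaneously, $(p+1,q+1)$ cannot be a boundary cell, and the lemma follows with essentially no computation.

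First I would settle the index bookkeeping. Definition~\ref{Def:Bordcells} describes the boundary cells of $D^{(r-1)}_{\munu}$, so the cells of $\mathcal{B}^{(r)}_{\munu}$ are exactly those $(p,q)$ with $r+1 \le p \le k$ satisfying the stated inequalities after replacing $r$ by $r+1$. The point I must verify carefully is that the lower bound on the column of a boundary cell in row $p$ is uniformly $\nu_p+1$: for $p > r+1$ this is immediate, and for $p = r+1$ the lower bound $\nu_{r+1}+1$ equals $\nu_p+1$. Hence every $(p,q) \in \mathcal{B}^{(r)}_{\munu}$ satisfies $q \ge \nu_p+1$, whichever case of the definition applies. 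I would also dispose of the edge case $p = k$ at once: then $(p+1,q+1)$ lies in row $k+1$, which is outside the allowed range $r+1 \le \cdot \le k$, so $(p+1,q+1)$ is automatically not a boundary cell.

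Next, assuming $r+1 \le p \le k-1$, I note that $(p+1,q+1)$ sits in row $p+1$ with $r+2 \le p+1 \le k$; in particular $p+1 > r+1$, so the only relevant defining inequality for $(p+1,q+1)$ is the generic one, $\nu_{p+1}+1 \le q+1 \le \nu_p+1$. Its upper bound yields $q \le \nu_p$, which directly contradicts the bound $q \ge \nu_p+1$ established above. Therefore $(p+1,q+1) \notin \mathcal{B}^{(r)}_{\munu}$, completing the argument.

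I expect the only delicate point — and the main thing to get right — to be the bookkeeping just described: confirming that the special row-$(r+1)$ formula, with its upper bound $\max\{\nu_{r+1}+1,\, a_{r+1}+b_{r+1}+c_{r+1}\}$, modifies only the upper bound and leaves the lower bound equal to $\nu_p+1$, and confirming that $(p+1,q+1)$ always falls into the generic case rather than this special one (since $p+1 \ge r+2 > r+1$). Once these are in place, the incompatibility of $q \ge \nu_p+1$ with $q \le \nu_p$ closes the proof immediately.
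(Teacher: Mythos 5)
Your proof is correct and follows essentially the same route as the paper's: both extract the lower bound $q \ge \nu_p+1$ from the membership of $(p,q)$ and contradict the upper bound $q+1 \le \nu_p+1$ that row $p+1$ would require of $(p+1,q+1)$. The extra bookkeeping you supply (the $p=k$ edge case and the check that the special row-$(r+1)$ clause only alters the upper bound) is sound and slightly more careful than the paper's one-line version of the same argument.
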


\begin{proof}
Let $(p,q) \in \mathcal{B}^{(r)}_{\munu}$.  Then $\nu_p + 1 \le q$ by Definition \ref{Def:Bordcells}.  Adding $1$ to both sides implies that $\nu_p+2 \le q+1$.  Then $\nu_p+1 < \nu_p+2 \le q+1$ contradicts the inequality $q+1 \le \nu_p+1$ necessary for $(p+1,q+1) \in \mathcal{B}^{(r)}_{\munu}$.  Therefore, $(p+1,q+1) \notin \mathcal{B}^{(r)}_{\munu}$ and the proof is complete.
\end{proof}

Note that Lemma~\ref{lem:bdrydiag} implies the set of all boundary cells does not contain any $2 \times2$ rectangles.  Another consequence of Lemma~\ref{lem:bdrydiag} is that for any cell $(p,q)$ contained in a \allowhook~in the diagram $D^{(r)}_{\munu}$, the cell $(p+1,q+1)$ is not a \bordercell~in $D^{(r)}_{\munu}$.  The following lemma shows that every \allowhook~is connected.

\begin{lem}{\label{lem:takesall}}
Let $\h(r,\c)$ be a \allowhook~in $D^{(r)}_{\munu}$ terminating at cell $\c=(p,q)$.  Then $\h(r,\c)$ is connected.
\end{lem}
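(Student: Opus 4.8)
The plan is to establish connectivity by a two-step argument: first show that the boundary cells lying in any single row of the hook form a contiguous horizontal run, and then show that each consecutive pair of rows of the hook is glued together by a single vertically adjacent pair of boundary cells. Combining these two facts lets one travel between any two cells of $\h(r,\c)$ through a sequence of adjacent boundary cells, which is precisely the notion of connectedness from Definition \ref{def:connected}. Throughout I would use that, by Definition \ref{D:tunhookdef}, $\h(r,\c)$ consists of all boundary cells in rows $r$ through $p$, where $\c=(p,q)$ is a \allowcell, so $q=\nu_p+1$.

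For the first step, fix a row $i$ with $r<i\le p$. By Definition \ref{Def:Bordcells} the boundary cells in row $i$ are exactly the cells $(i,q')$ with $\nu_i+1\le q'\le \nu_{i-1}+1$. Since $(\nu_r,\nu_{r+1},\ldots,\nu_k)$ is a partition we have $\nu_{i-1}\ge\nu_i$, so this range of columns is nonempty and consists of consecutive integers; hence the boundary cells of row $i$ occupy consecutive columns and any two of them are joined by a horizontal chain of (horizontally) adjacent cells. The bottom row $i=r$ is treated the same way using the alternative bound $\nu_r+1\le q'\le\max\{\nu_r+1,a_r+b_r+c_r\}$, which again describes a contiguous block of columns starting at $\nu_r+1$. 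In particular every row $r\le i\le p$ of the hook contains at least the cell $(i,\nu_i+1)$ (respectively $(r,\nu_r+1)$) and is therefore connected within itself.

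For the second step, I would exhibit, for each $i$ with $r<i\le p$, a vertically adjacent pair of boundary cells linking row $i$ to row $i-1$. The rightmost boundary cell of row $i$ is $(i,\nu_{i-1}+1)$, while the leftmost boundary cell of row $i-1$ is $(i-1,\nu_{i-1}+1)$; note that when $i-1=r$ this is $(r,\nu_r+1)$, which is still the column $\nu_{i-1}+1=\nu_r+1$. These two cells share the same column and their row indices differ by one, so they are adjacent by Definition \ref{def:connected}, and both are boundary cells by the inequalities recorded above. Hence rows $i$ and $i-1$ lie in the same connected component.

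Finally I would assemble the pieces: each row of $\h(r,\c)$ is internally connected by the first step, and each consecutive pair of rows is linked by the second step, so the union of rows $r$ through $p$ is connected. The degenerate single-row case $p=r$ is immediate from the first step alone. The only point needing care is the separate bookkeeping for the bottom row $r$, whose boundary cells obey a different upper bound; but since that block still begins at column $\nu_r+1$, the gluing of row $r+1$ to row $r$ at column $\nu_r+1$ goes through verbatim, so this is a minor subtlety rather than a genuine obstacle.
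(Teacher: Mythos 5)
Your proposal is correct and follows essentially the same argument as the paper: each row's boundary cells form a contiguous horizontal block beginning at column $\nu_i+1$, and consecutive rows are glued by the vertically adjacent pair $(i,\nu_{i-1}+1)$ and $(i-1,\nu_{i-1}+1)$, with the bottom row handled via its leftmost cell $(r,\nu_r+1)$. No gaps.
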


\begin{proof}
For $r<i < p$, the boundary cells $Y_{i}$ and $Y_{i+1}$ of rows $i$ and $i+1$ are, respectively,
$$Y_i=\{(i,\nu_{i}+1),(i,\nu_{i}+2),\ldots,(i,\nu_{i-1}+1)\}$$ and
$$Y_{i+1}=\{(i+1,\nu_{i+1}+1),(i+1,\nu_{i+1}+2),\ldots,(i+1,\nu_{i}+1)\}.$$
  Each of these collections is connected.  Since $(i,\nu_{i}+1)$ and $(i+1,\nu_{i}+1)$ are connected, the union $Y_{i} \cup Y_{i+1}$ is connected, and hence the collection $\h(r,\c)$ is connected as long as the collection $Y_r$ of boundary cells in row $r$ is connected to the boundary cells $Y_{r+1}$ in row $r+1$.  Since $(r,\nu_r+1) \in Y_r$ and $(r+1, \nu_r+1) \in Y_{r+1}$, the union $Y_r \cup Y_{r+1}$ is connected and the proof is complete.
\end{proof}

\begin{defn}{\label{D:bank}}
Given a row $i$ of $D^{(r)}_{\munu}$,
define
\begin{align}
    \spin_{\munu}(i)=&(\text{\# of blue cells in row $i$) --
    (\text{\# of red cells in row $i$})}\notag
\end{align}
\end{defn}
\sa

\noindent
With $\h(r,\c)$ a \allowhook~in the partial diagram $D^{(r)}_{\munu}$ terminating at cell $\c=(p,q)$,
set
\begin{equation}
\Delta(\h(r,\c))=\spin_{\munu}(r) + (\nu_r+1-q)+(p-r).
\label{E:Delta}
\end{equation}
Note that $(\nu_r+1-q)+(p-r)$ is the \emph{taxicab}
(or \emph{Manhattan}) distance from the cell $(r,\nu_r+1)$
to the cell $\c=(p,q)$ \cite{kra86}.
Therefore, set 
\begin{equation}
\taxi(\h(r,\c))= (\nu_r+1-q)+(p-r).
\label{Def:taxi}
\end{equation}
In Equation~\eqref{E:Delta}, since $\nu$ is a partition (whose parts might be $0$),
the cell $(p,q) \in {\calT}^{(r)}_{\munu}$ will be weakly west of 
$(r,\nu_r+1)$.

\begin{ex}
Consider the partial diagram $D^{(2)}_{\munu}$ depicted in Example \ref{Ex:munu}.  The \allowhook~shown has
$\Delta(\h(3,(6,2)))=-6+4=-2,$
since there are six red cells in row $3$ and  $\taxi(\h(3,(6,2)))=4.$  Similarly (but not shown), $\Delta(\h(3,(7,1)))=-6+6=0$, $\Delta(\h(3,(5,3)))=-6+2=-4$, $\Delta(\h(3,(4,3)))=-6+1=-5,$ and $\Delta(\h(3,(3,3)))=-6+0=-6.$
\label{Ex:Deltas}
\end{ex}

The proof of the following lemma is immediate since there is only one \allowcell~in each row and moving up a row strictly increases the value of $\Delta(\h(r,\c))$.  

\begin{lem}{\label{T:UniqueLengths}}
Given a partial diagram $D^{(r)}_{\mu/\nu}$ with $\mu$ a sequence and $\nu$ a partition, for any fixed $j \in \mathbb{Z}$, there is at most one \allowcell~$\c$ such that $j=\Delta(\h(r,\c))$.
\end{lem}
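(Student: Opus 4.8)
The plan is to reduce the statement to a one-line monotonicity estimate, following the heuristic noted just before the lemma. First I would recall from Definition~\ref{Def:Bordcells} that the unique tunnel cell in row $p$ is the boundary cell $\c=(p,\nu_p+1)$; in particular each row contains at most one tunnel cell, so a tunnel cell is determined entirely by its row index $p$. Hence it suffices to prove that the assignment $p \mapsto \Delta(\h(r,\c))$ is injective, and for this I will show that it is in fact strictly increasing in $p$.

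Substituting the tunnel-cell coordinate $q=\nu_p+1$ into the defining formula~\eqref{E:Delta}, and using that $\spin_{\munu}(r)$ depends only on the fixed starting row $r$ and not on the terminal cell, I obtain
$$\Delta(\h(r,\c)) = \spin_{\munu}(r) + (\nu_r-\nu_p) + (p-r),$$
so that all dependence on $\c$ is carried by the taxicab term $\taxi(\h(r,\c)) = (\nu_r-\nu_p)+(p-r)$ of~\eqref{Def:taxi}. The key computation is then to compare consecutive rows: writing $g(p):=(\nu_r-\nu_p)+(p-r)$, a direct subtraction gives
$$g(p+1)-g(p) = (\nu_p-\nu_{p+1})+1.$$
Because $\nu$ is a partition we have $\nu_p \ge \nu_{p+1}$, so this difference is at least $1$, hence strictly positive; telescoping across the rows above $r$ then shows that $g$, and therefore $\Delta$, is strictly increasing in $p$. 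An injective function takes each value at most once, which is exactly the claim.

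I do not expect a genuine obstacle here: the entire argument is the single estimate $\nu_p-\nu_{p+1}\ge 0$. The only point deserving care is that this inequality is precisely where the hypothesis that $\nu$ is a partition enters — without the weak-decrease condition on $\nu$ the difference $g(p+1)-g(p)$ could vanish or turn negative, so that two distinct tunnel cells could share a $\Delta$-value and injectivity would fail. A trivial bookkeeping remark, which I would mention but not belabor, is that for a partition $\nu$ the cell $(p,\nu_p+1)$ is genuinely a boundary cell for every $p$ in the relevant range (since $\nu_p+1\le\nu_{p-1}+1$), so the tunnel cells occupy a contiguous block of rows and the comparison of consecutive rows covers all of them.
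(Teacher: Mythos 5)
Your proof is correct and takes exactly the route the paper does: the paper dispenses with the lemma in one sentence ("there is only one tunnel cell in each row and moving up a row strictly increases the value of $\Delta(\h(r,\c))$"), and your argument simply makes that monotonicity explicit via the computation $g(p+1)-g(p)=(\nu_p-\nu_{p+1})+1\ge 1$. Your added remark pinpointing where the partition hypothesis on $\nu$ is used is accurate and a worthwhile clarification, but the substance is identical to the paper's.
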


The following iterative procedure provides a method for constructing a {\emph \allowhookfilling~($\SHF$)} of the diagram $D_{\munu}$.  

\begin{proc}{\label{A:THC}}
Consider a sequence $\mu=(\mu_1, \mu_2, \hdots , \mu_k) \in \mathbb{Z}^k$ and a partition $\nu=(\nu_1, \nu_2 , \hdots , \nu_k)$.

\begin{enumerate}

\item Construct the partial \GBPR~diagram $D^{(0)}_{\munu}$ of shape $\munu$.  Set $\nu^{(0)}=\nu$.

\item Repeat the following steps, once for each value of $r$ from $1$ to $k$.

\begin{enumerate}

\item Choose a \allowhook~$\h(r,\c_r)$ in $D^{(r-1)}_{\mu/\nu^{(r-1)}}$ and set $$\alpha_r:=\Delta(\h(r,\c_r)).$$

\item{\label{St:grey}} For each $1 \le i \le k$, let $\eta_i^{(r)}$ be the number of cells in row $i$ of $\h(r,\c_r)$ and let $\nu^{(r)}$ be the sequence defined for $1 \le i \le k$ by$$\nu_{i}^{(r)}=\nu_{i}^{(r-1)}+\eta_{i}^{(r)}.$$

\item{\label{St:gbpr}} Construct the partial \GBPR~diagram $D^{(r)}_{\mu/\nu^{(r)}}$.  

\end{enumerate}
\item Let $\gamma$ denote the resulting \allowhookfilling~and set \newline $\Delta(\gamma) = (\alpha_1, \hdots , \alpha_k).$ 
\end{enumerate}
\end{proc}

Step~\ref{St:grey} appends the cells covered by $\h(r,\c)$ to $\nu^{(r-1)}$.  Although $\nu^{(r)}$ is not necessarily a partition, we will prove in Lemma~\ref{L:partition} that $\nu^{(r)}_{r+1} \ge \nu^{(r)}_{r+2} \ge \nu^{(r)}_{k}$, as is necessary in order to construct the partial diagram $D^{(r)}_{\mu/\nu^{(r)}}$. 

In the diagram $D^{(r)}_{\mu/\nu^{(r)}}$ constructed in Step~\ref{St:gbpr}, the newly appended cells become grey.  Additional red cells are appended to the remaining rows in Step~\ref{St:gbpr}. One additional red cell is appended to row $i$ for every purple cell covered by $\h(r,\c)$ in row $i$ and two additional red cells are appended to row $i$ for every red cell covered by $\h(r,\c)$ in row $i$.

\begin{lem}{\label{L:partition}} 
Let $\mu \in \mathbb{Z}^k$ be an integer sequence and $\nu^{(r)}$ be a sequence of nonnegative integers produced during Step~\ref{St:grey} of Procedure~\ref{A:THC}.  Then $(\nu^{(r)}_{r+1}, \nu^{(r)}_{r+2} , \hdots , \nu^{(r)}_k)$ is a partition.
\end{lem}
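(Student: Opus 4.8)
The plan is to argue by induction on $r$, so that the statement proved at step $r-1$ furnishes exactly the hypothesis needed at step $r$. The base case is the initialization $\nu^{(0)}=\nu$, for which $(\nu^{(0)}_1,\ldots,\nu^{(0)}_k)=\nu$ is a partition by assumption. For the inductive step I would assume that $(\nu^{(r-1)}_r,\nu^{(r-1)}_{r+1},\ldots,\nu^{(r-1)}_k)$ is a partition; this is both the conclusion of the lemma at the previous step and precisely what makes the partial diagram $D^{(r-1)}_{\mu/\nu^{(r-1)}}$ well-defined, so that Step~\ref{St:grey} can be carried out. Abbreviating $\nu_i:=\nu^{(r-1)}_i$, the inductive hypothesis reads $\nu_r\ge\nu_{r+1}\ge\cdots\ge\nu_k\ge 0$.

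First I would pin down $\eta^{(r)}_i$, the number of cells of the chosen hook $\h(r,\c_r)$ in each row $i$, writing $\c_r=(p,q)$ for its terminal cell. By Definition~\ref{D:tunhookdef} the hook collects \emph{all} boundary cells in rows $r$ through $p$, and for a row $r<i\le p$ the characterization in Definition~\ref{Def:Bordcells} (exactly as extracted in the proof of Lemma~\ref{lem:takesall}) shows these occupy columns $\nu_i+1$ through $\nu_{i-1}+1$; hence $\eta^{(r)}_i=\nu_{i-1}-\nu_i+1$. For $i>p$ the hook meets row $i$ in no cells, so $\eta^{(r)}_i=0$. Feeding this into the update rule $\nu^{(r)}_i=\nu_i+\eta^{(r)}_i$ of Step~\ref{St:grey} gives the telescoping identity
$$\nu^{(r)}_i=\nu_{i-1}+1\quad(r<i\le p),\qquad \nu^{(r)}_i=\nu_i\quad(i>p),$$
which is the key observation: on the rows the hook touches, the new profile is just the old profile shifted up by one row and raised by $1$.

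Granting this, checking that $(\nu^{(r)}_{r+1},\ldots,\nu^{(r)}_k)$ is weakly decreasing reduces to three short comparisons. For $r+1\le i<p$ we get $\nu^{(r)}_i-\nu^{(r)}_{i+1}=(\nu_{i-1}+1)-(\nu_i+1)=\nu_{i-1}-\nu_i\ge 0$ by the inductive hypothesis; at the terminal row, $\nu^{(r)}_p-\nu^{(r)}_{p+1}=(\nu_{p-1}+1)-\nu_{p+1}\ge 0$ since $\nu_{p-1}\ge\nu_p\ge\nu_{p+1}$; and for $i>p$ we get $\nu^{(r)}_i-\nu^{(r)}_{i+1}=\nu_i-\nu_{i+1}\ge 0$ again by the inductive hypothesis. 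Nonnegativity of the tail is immediate because each $\nu_i\ge 0$ and each $\eta^{(r)}_i\ge 0$, so the tail is a partition and the induction closes.

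The hard part is really just the bookkeeping of $\eta^{(r)}_i$ across the three regions---rows strictly inside the hook, the terminal row $p$, and the rows below it. The one point demanding care is that the terminal row $p$ obeys the same count $\eta^{(r)}_p=\nu_{p-1}-\nu_p+1$ as the interior rows: although the terminal cell $(p,\nu_p+1)$ is merely the \emph{leftmost} boundary cell of row $p$, the hook nonetheless claims every boundary cell of that row. I would also dispose of the degenerate case $p=r$ (the hook lies entirely in row $r$, all tail entries are unchanged, and the conclusion is immediate from the inductive hypothesis) and note that row $r$ itself is intentionally omitted from the claim, so no comparison involving $\nu^{(r)}_r$ is ever required.
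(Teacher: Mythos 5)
Your proof is correct and follows essentially the same route as the paper's: induction on $r$, with the inductive step reduced to comparing the column extents of the boundary cells in consecutive rows covered by the hook. The only difference is cosmetic — you make the identity $\nu^{(r)}_i=\nu^{(r-1)}_{i-1}+1$ on hook rows explicit, whereas the paper phrases the same comparison in terms of the largest column index of a boundary cell in rows $i$ and $i+1$ — and your handling of the base case ($\nu^{(0)}=\nu$ an arbitrary partition) is if anything slightly more careful than the paper's.
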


\begin{proof}
First note that for $r=0$, the sequence $(\nu^{(0)}_1, \nu^{(0)}_2, \hdots , \nu^{(0)}_k) = (0,0, \hdots , 0)$ and therefore $(\nu^{(0)}_1, \nu^{(0)}_2, \hdots , \nu^{(0)}_k)$ is a partition.  
Assume that $(\nu_{j+1}^{(j)}, \nu_{j+2}^{(j)}, \hdots , \nu_{k}^{(j)})$ is a partition for $0 \le j \le r-1$ and let $\nu^{(r)}$ be a sequence of nonnegative integers produced during Step~\ref{St:grey} of Procedure~\ref{A:THC} for the partial \GBPR~diagram $D^{(r-1)}_{\mu/\nu^{(r-1)}}$.  Let $\h(r,\c_r)$ be the \allowhook~constructed during this process.  We will prove that $\nu^{(r)}_{i} \ge \nu^{(r)}_{i+1}$ for $r+1 \le i < k$.

First note that since $(\nu_{r}^{(r-1)}, \nu_{r+1}^{(r-1)}, \hdots , \nu_{k}^{(r-1)})$ is a partition by assumption, $\nu^{(r-1)}_{i} \ge \nu^{(r-1)}_{i+1}$ for $r \le i < k$.  Assume $r<i < k$ and notice this means we don't need to consider cells from row $r$, which eliminates the need to consider the second inequality in the boundary cell definition (Definition~\ref{Def:Bordcells}).  If $\h(r,\c_r)$ does not include cells from row $i+1$, then $\nu^{(r)}_{i+1} = \nu^{(r-1)}_{i+1} \le  \nu^{(r-1)}_i  \le \nu^{(r)}_i$ so that $\nu^{(r)}_{i+1} \le \nu^{(r)}_i,$ as desired.

If $\h(r,\c_r)$ includes cells from row $i+1$, it must also include cells from row $i$.  Recall the definition of \allowhooks~states that every boundary cell from an included row must be contained in $\h(r,\c_r)$.  The largest column index for a boundary cell in row $i+1$ is $\nu^{(r-1)}_{i+1-1}+1 = \nu^{(r-1)}_i+1$, whereas the largest column index for a boundary cell in row $i$ is $\nu^{(r-1)}_{i-1}+1$.  Since $\nu^{(r-1)}_{i}+1 \le \nu^{(r-1)}_{i-1}+1 $, the largest column index for a boundary cell in row $i+1$ is weakly smaller than the largest column index for a boundary cell in row $i$.  Therefore, $\nu^{(r)}_{i+1} \le \nu^{(r)}_i$, and the proof is complete.
\end{proof}

\begin{ex}
We describe a \allowhookfilling~of $D_{\munu}$ when 
$\munu=(-3,5,5,\\ 0,5,-2,4,6)/(2,1)$.  
First, we give the \GBPR~diagram of $D^{(0)}_{\munu}$.
We then use a table to provide details for a particular \allowhookfilling. 
Finally, we illustrate this particular $\THF$ on the \GBPR~diagram itself.  
\vskip .125truein

The \GBPR~diagram of $D^{(0)}_{\munu}$ is shown below, with each cell labelled by its corresponding row and column.
\vskip .125truein

\hskip .675truein
\hbox{
\vbox{
\Yfillcolour{lightblue}
\young({8,1}{8,2}{8,3}{8,4}{8,5}{8,6},)
\vskip -1.295pt
\Yfillcolour{lightblue}
\young({7,1}{7,2}{7,3}{7,4},)
\vskip -1.295pt
\Yfillcolour{lightred}
\young({6,1}{6,2},)
\vskip -1.295pt
\Yfillcolour{lightblue}
\young({5,1}{5,2}{5,3}{5,4}{5,5},)
\vskip -1.295pt
\Yfillcolour{lightpurple}
\young({4,1},)
\vskip -1.295pt
\Yfillcolour{lightblue}
\young({3,1}{3,2}{3,3}{3,4}{3,5},)
\vskip -1.295pt
\Yfillcolour{grey}
\young({2,1}!<\Yfillcolour{lightblue}>{2,2}{2,3}{2,4}{2,5},)
\vskip -1.295pt
\Yfillcolour{grey}
\young({1,1}{1,2}!<\Yfillcolour{lightred}>{1,3}{1,4}{1,5}{1,6}{1,7})
}}
\label{Fig:mu0nu0start}

\vskip .125truein
\noindent
The following table records the process of decomposing the \GBPR~diagram $D_{\munu}$ into \allowhooks.  
Each row $r$ indicates the situation before the $r^{th}$ \allowhook~is placed.  Here $\c_r$ is the \allowcell~at which the \allowhook~beginning in the $r^{th}$ row of the partial diagram terminates.  Notice that although $\nu^{(r-1)}$ is not always a partition, the last $r$ terms of $\nu^{(r-1)}$ do form a partition, so that $D^{(r-1)}_{\mu/\nu^{(r-1)}}$ is a valid partial \GBPR~diagram.  

\begin{center}
\begin{tabular}{|c|c|c|c|c|}
\hline
$r$ & $(\mu_r,\mu_{r+1}, \hdots , \mu_k)$  & $\nu^{(r-1)}$ &   $\c_r$ &  
$\Delta(\h(r, \c_r))$\\
\hline
$1$ & $(-3,5,5,0,5,-2,4,6)$ & $(2,1,0,0,0,0,0,0)$ & $(5,1)$  & $-5+6=1$   \\
\hline
$2$ & $(5,5,0,5,-2,4,6)$ & $(7,3,2,1,1,0,0,0)$ & $(2,4)$   & $2+0=2$  \\
\hline
$3$ & $(5,0,5,-2,4,6)$ & $(7,5,2,1,1,0,0,0)$ & $(4,2)$   & $3+2=5$  \\
\hline
$4$ & $(0,5,-2,4,6)$ & $(7,5,5,3,1,0,0,0)$ & $(5,2)$   & $-3+3=0$ \\
\hline
$5$ & $(5,-2,4,6)$ & $(7,5,5,6,4,0,0,0)$ & $(5,5)$  & $1+0=1$ \\
\hline
$6$ & $(-2,4,6)$ & $(7,5,5,6,5,0,0,0)$  & $(8,1)$  & $-2+2=0$ \\
\hline
$7$ & $(4,6)$ & $(7,5,5,6,5,2,1,1)$  & $(8,2)$   & $3+1=4$ \\
\hline
$8$ & $(6)$ & $(7,5,5,6,5,2,4,2)$  & $(8,3)$   & $4+0=4$\\ 
\hline
\end{tabular}
\end{center}

\noindent
Next, we illustrate $\gamma$, the $\THF$ of $D^{(0)}_{\munu}$ indicated in the previous table.  We leave all the rows in the diagram so that we can depict all the \allowhooks~at once.  We also omit the step of converting the colors of the cells to grey as they are covered by \allowhooks, so that their color as they are covered by a tunnel hook is retained.

\vskip .125truein
\hskip .35truein
\begin{picture}(100,100)
\put(0,0){
\hbox{
\vbox{
\Yfillcolour{lightblue}
\young(\ \ \ \ \ \ ) 
\vskip -1.295pt
\Yfillcolour{lightblue}
\young(\ \ \ \ )
\vskip -1.295pt
\Yfillcolour{lightred}
\young(\ \ )
\vskip -1.295pt
\Yfillcolour{lightblue}
\young(\ \ \ \ \ )
\vskip -1.295pt
\Yfillcolour{lightpurple}
\young(\  !<\Yfillcolour{lightred}>\ !<\Yfillcolour{lightpurple}>\ !<\Yfillcolour{lightred}>\ \ \ )
\vskip -1.295pt
\Yfillcolour{lightblue}
\young(\  \ \ \ \ )
\vskip -1.295pt
\Yfillcolour{grey}
\young(\ !<\Yfillcolour{lightblue}>\ \ \ \ )
\vskip -1.295pt
\Yfillcolour{grey}
\young(\ \ !<\Yfillcolour{lightred}>\ \ \ \ \ )

}}
}
\put(95,3.5){\line(1,0){120}}
\put(95,3.5){\line(0,1){13}}
\put(65,16.5){\line(1,0){30}}
\put(65,16.5){\line(0,1){13}}
\put(35,29.5){\line(1,0){30}}
\put(35,29.5){\line(0,1){26}}
\put(235,0){$\Delta=1$}
\put(125,16.5){\line(1,0){30}}
\put(175,13){$\Delta=2$}
\put(95,29.5){\line(1,0){60}}
\put(95,29.5){\line(0,1){13}}
\put(65,42.5){\line(1,0){30}}
\put(175,25){$\Delta=5$}
\put(125,42.5){\line(1,0){60}}
\put(125,42.5){\line(0,1){13}}
\put(65,55.5){\line(1,0){60}}
\put(205,39){$\Delta=0$}
\put(150,55.5){\line(1,0){10}}
\put(175,52){$\Delta=1$}
\put(35,68.5){\line(1,0){30}}
\put(35,68.5){\line(0,1){26}}
\put(85,65){$\Delta=0$}
\put(65,81.5){\line(1,0){60}}
\put(65,81.5){\line(0,1){13}}
\put(145,77.5){$\Delta=4$}
\put(95,94.5){\line(1,0){90}}
\put(205,90.5){$\Delta=4$}


\end{picture}

\vspace*{.1in}

\noindent
Therefore $\Delta(\gamma)=(1,2,5,0,1,0,4,4)$.
\end{ex}

\section{Theorem \ref{T:main} proof and application to $\QSym$}

In this section, we prove Theorem~\ref{T:main}.  Recall Theorem~\ref{T:NsymJacobiTrudi}~\cite{BBSSZ14}
states that 
$$\I_\mu=\ndet(M_\mu)=\sum_{\sigma\in S_k}
\sign(\sigma) (M_\mu)_{i,\sigma_i}.
$$  Theorem~\ref{T:main} provides a combinatorial interpretation of this decomposition of immaculate functions into complete homogeneous functions using diagram fillings called~\allowhookfillings.  The main idea behind our proof
is a natural bijection between \allowhookfillings~and permutations
(Lemma~\ref{T:perm}).
Furthermore, each \allowhookfilling~is associated to a number which we will show is equal to the subscript of the corresponding complete homogeneous function appearing in the matrix $M_{\mu}$ (Lemma~\ref{T:Mrj}).  
We complete the proof by attaching signs to tunnel hooks and showing that the product of the signs of the \allowhooks~in a \allowhookfilling~is equal to the sign of the corresponding permutation.  We close the section with an application to $\QSym$, obtaining a combinatorial formula for the expansion of monomial quasisymmetric functions in terms of dual immaculates as a corollary to Theorem~\ref{T:main}.

\subsection{Transitions from ${\calT}^{(r)}_{\mu/\nu^{(r)}}$
to
${\calN}^{(r)}_{\mu/\nu^{(r)}}$
}

The first lemma necessary for the proof of Theorem~\ref{T:main} provides foundational insight into how \allowcells~are related to the diagonals parallel to the line $y=x$.  Let
\begin{equation}
\mathcal{L}_{j}=\{(p,q) | p-q+1=j\} = \{(j+m,1+m) | m \in \mathbb{Z}^{\ge 0} \}
\label{E:LA}
\end{equation}
be the collection of cells in the $j^{th}$ diagonal of the first quadrant of the plane, for $1 \le j \le k$.  These diagonals (whose properties are described in the following lemma) will correspond to the entries in the permutations used when computing the determinant of the matrix $M_{\mu}$.
Recall the definition of
border cells $\mathcal{B}^{(r)}_{\mu/\nu^{(r)}}$,
tunnel cells 
${\calT}^{(r)}_{\mu/\nu^{(r)}}$,
and ${\calN}^{(r)}_{\mu/\nu^{(r)}}$
found in 
and immediately following Definition~\ref{Def:Bordcells}.

\begin{lem}
\label{T:Losecells}
Let $D^{(r-1)}_{\mu/\nu^{(r-1)}}$ be a partial \GBPR~diagram for $\mu \in \mathbb{Z}^k$ and $\nu^{(r-1)} \in \mathbb{Z}^k_{\ge 0}$ such that $(\nu^{(r-1)}_r, \nu^{(r-1)}_{r+1}, \hdots , \nu^{(r-1)}_k)$ is a partition.  Assume $r<k$.  Suppose $\tc,\td \in{\calT}^{(r-1)}_{\mu/\nu^{(r-1)}}$, $\tc \ne \td$, and
$\n\in{\calN}^{(r-1)}_{\mu/\nu^{(r-1)}}.$ Furthermore, suppose $\tc =(p_1,q_1),$
$\td =(p_2,q_2),$ and $\n=(p_3,q_3)$.  Finally,
let $\h(r,\tc)$ be a \allowhook~in 
the diagram $D^{(r-1)}_{\mu/\nu^{(r-1)}}.$
Then
\begin{enumerate}[label=\Alph*.]
    \item $(p_1+1,q_1+1)\in {\calN}^{(r)}_{\mu/\nu^{(r)}}.$
    
    \item If $\h(r,\tc)$ does not cover $\td$ then
     $\td\in{\calT}^{(r)}_{\mu/\nu^{(r)}}$.
     
     \item If $\h(r,\tc)$ covers $\td$ then $(p_2+1,q_2+1)\in{\calT}^{(r)}_{\mu/\nu^{(r)}}.$
     
     \item
     If $\h(r,\tc)$ does not cover $\n$ then $\n\in{\calN}^{(r)}_{\mu/\nu^{(r)}}.$
    
     \item
     If $\h(r,\tc)$ covers $\n$ then either $(p_3+1,q_3+1)\in{\calN}^{(r)}_{\mu/\nu^{(r)}}$  or $p_3=k$ in which case $(p_3+1,q_3+1) \notin D^{(r)}_{\mu/\nu^{(r)}}$.
    
\end{enumerate}
\end{lem}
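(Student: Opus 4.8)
The plan is to reduce every claim to the defining inequalities for \bordercells~and \allowcells~(Definition~\ref{Def:Bordcells}), after first recording precisely how the row lengths change when $\h(r,\tc)$ is placed. The one computation that drives everything is this: by Lemma~\ref{lem:takesall} the hook consists of every \bordercell~in rows $r$ through $p_1$, and in each row $i$ with $r<i\le p_1$ those \bordercells~run from column $\nu^{(r-1)}_i+1$ to column $\nu^{(r-1)}_{i-1}+1$; hence Step~\ref{St:grey} of Procedure~\ref{A:THC} yields
\[
\nu^{(r)}_i=\nu^{(r-1)}_{i-1}+1 \ \ (r<i\le p_1),\qquad \nu^{(r)}_i=\nu^{(r-1)}_i \ \ (i>p_1),
\]
together with the monotonicity $\nu^{(r)}_i\ge\nu^{(r-1)}_i$ for all $i$. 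I would also flag the conceptual thread unifying A, C, and~E: the shift $(p,q)\mapsto(p+1,q+1)$ preserves the diagonal index $p-q$, so parts A, C, and~E each describe how the $\calT$/$\calN$ status of a cell propagates to its neighbor one step to the northeast along the same diagonal $\mathcal{L}_j$ (with A turning a \allowcell~into an $\calN$-cell, C preserving \allowcells, and E preserving $\calN$-cells), while part~B records that an uncovered \allowcell~is retained in place.

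Next I would dispatch the three statements about \allowcells, recalling $q_1=\nu^{(r-1)}_{p_1}+1$ and $q_2=\nu^{(r-1)}_{p_2}+1$. For A, row $p_1+1$ is uncovered, so $\nu^{(r)}_{p_1+1}=\nu^{(r-1)}_{p_1+1}$ and $\nu^{(r)}_{p_1}=\nu^{(r-1)}_{p_1-1}+1$; substituting $q_1+1=\nu^{(r-1)}_{p_1}+2$ into the \bordercell~inequalities and invoking $\nu^{(r-1)}_{p_1+1}\le\nu^{(r-1)}_{p_1}\le\nu^{(r-1)}_{p_1-1}$ shows $(p_1+1,q_1+1)$ is a \bordercell, while the same partition inequality forbids the \allowcell~equality $q_1+1=\nu^{(r)}_{p_1+1}+1$, placing the cell in $\calN^{(r)}$ (when $p_1=k$ the image leaves the diagram, as in~E). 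For~B, ``not covered'' forces $p_2>p_1$, so row $p_2$ is untouched, $q_2=\nu^{(r)}_{p_2}+1$ persists, and the only thing to check, the upper bound $\nu^{(r)}_{p_2}\le\nu^{(r)}_{p_2-1}$, is the partition property of $\nu^{(r)}$ from Lemma~\ref{L:partition}. For~C, ``covered'' together with uniqueness of the \allowcell~in each row and $\td\ne\tc$ forces $r\le p_2<p_1$, whence row $p_2+1$ is covered and $\nu^{(r)}_{p_2+1}=\nu^{(r-1)}_{p_2}+1=q_2$; this is exactly the \allowcell~equality for $(p_2+1,q_2+1)$, and its upper \bordercell~bound reduces to $\nu^{(r-1)}_{p_2}\le\nu^{(r-1)}_{p_2-1}$.

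The two statements about $\calN$-cells follow the same template, starting from $\n=(p_3,q_3)\in\calN^{(r-1)}$, which means $q_3\ge\nu^{(r-1)}_{p_3}+2$ and $q_3\le\nu^{(r-1)}_{p_3-1}+1$. For~D, ``not covered'' gives $p_3>p_1$ and $\nu^{(r)}_{p_3}=\nu^{(r-1)}_{p_3}$, so the strict lower bound keeps $\n$ above the new grey cells and off the \allowcell~column, while its upper bound survives because $\nu^{(r)}_{p_3-1}\ge\nu^{(r-1)}_{p_3-1}$; hence $\n\in\calN^{(r)}$. For~E, ``covered'' gives $p_3\le p_1$; if $p_3=k$ then $D^{(r)}_{\mu/\nu^{(r)}}$ has no row $k+1$ and $(p_3+1,q_3+1)$ is not a cell of the diagram, and otherwise I split on $p_3<p_1$ versus $p_3=p_1$ to evaluate $\nu^{(r)}_{p_3+1}$ (namely $\nu^{(r-1)}_{p_3}+1$ or $\nu^{(r-1)}_{p_3+1}$). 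In each subcase the shifted column $q_3+1$ clears the lower \bordercell~bound strictly, so $(p_3+1,q_3+1)$ is a \bordercell~that misses the \allowcell~column and therefore lies in $\calN^{(r)}$.

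The main obstacle is not any single inequality but the base-row bookkeeping: whenever a cell under consideration lies in the base row $r$ of $D^{(r-1)}_{\mu/\nu^{(r-1)}}$ or the base row $r+1$ of $D^{(r)}_{\mu/\nu^{(r)}}$, Definition~\ref{Def:Bordcells} replaces the generic upper bound $\nu_{p-1}+1$ by $\max\{\nu_p+1,\,a_p+b_p+c_p\}$. I would handle these cases in parallel with the generic ones: the lower bounds and the \allowcell~column equalities are unaffected, and since the $\max$ bound is never smaller than the generic bound, each \bordercell~verification carries over. The only genuinely separate point is the degenerate $p_3=k$ situation (and, for~A, $p_1=k$), which is immediate because the partial diagram has no row beyond~$k$.
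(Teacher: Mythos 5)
Your argument is essentially correct and tracks the paper's proof part by part, but it is organized around a different engine. The paper verifies each claim with ad hoc inequalities, using Lemma~\ref{lem:bdrydiag} (no diagonally adjoining \bordercells) together with connectedness (Lemma~\ref{lem:takesall}) to pin down, in parts C and E, exactly which cells of rows $p_2+1$ and $p_3+1$ are covered. You instead derive up front the update rule $\nu^{(r)}_i=\nu^{(r-1)}_{i-1}+1$ for covered rows $r<i\le p_1$ (and $\nu^{(r)}_i=\nu^{(r-1)}_i$ for uncovered rows), after which every part is a substitution into Definition~\ref{Def:Bordcells}; this turns part C, where the paper needs a connectivity argument, into a one-line computation. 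The trade-off is that your formula necessarily excludes the base row, so all the delicate cases concentrate there. (Minor point: the fact that a \allowhook~consists of all \bordercells~in rows $r$ through $p_1$ is Definition~\ref{D:tunhookdef}; Lemma~\ref{lem:takesall} is only the connectedness statement.)

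The one step that does not survive scrutiny is the closing claim that ``the $\max$ bound is never smaller than the generic bound.'' It is false: since $a_p=\nu_p$, the quantity $\max\{\nu_p+1,\,a_p+b_p+c_p\}$ depends only on the cells actually present in row $p$ and can be far smaller than $\nu_{p-1}+1$. This bites exactly in part A when $p_1=r$, since then $(r+1,q_1+1)$ sits in the base row of $D^{(r)}_{\mu/\nu^{(r)}}$: for $\mu=(3,1)$, $\nu^{(0)}=(0,0)$, and $\h(1,(1,1))$, one gets $\nu^{(1)}=(3,0)$, the \bordercells~of row $2$ of $D^{(1)}_{\mu/\nu^{(1)}}$ are just $\{(2,1)\}$, and so $(2,2)\notin\calB^{(1)}_{\mu/\nu^{(1)}}$. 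To be fair, the paper's own proof of part A applies the generic upper bound $\nu^{(r)}_{p_1}+1$ to row $p_1+1$ and thus silently skips the same case, and the downstream use of the lemma (Lemma~\ref{L:diags}) only needs $(p_1+1,q_1+1)\notin\calT^{(r)}_{\mu/\nu^{(r)}}$, which your argument does establish. Still, since you raised the base-row issue explicitly, you should replace the false inequality either by an honest case check or by the weaker (and sufficient) conclusion that the shifted cell is not a \allowcell.
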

\noindent

\begin{proof}[Proof of part A]

First note that $p_1, p_2, p_3 \ge r$ since the diagram begins in row $r$.  Since $\tc=(p_1,q_1)$ is the terminal cell of
$\h(r,\tc)$, we have
$\nu_{p_1}^{(r-1)} = q_1-1,$ $\nu_{p_1}^{(r)} \ge q_1,$
and $\nu_{p_1+1}^{(r-1)}=\nu_{p_1+1}^{(r)}$.  Also, the fact that $(\nu^{(r-1)}_r, \nu^{(r-1)}_{r+1}, \hdots , \nu^{(r-1)}_k)$ is a partition (Lemma~\ref{L:partition}) implies
$$\nu_{p_1+1}^{(r)} = \nu_{p_1+1}^{(r-1)}\le  \nu_{p_1}^{(r-1)}=q_1-1.$$
Thus, $$\nu_{p_1+1}^{(r)} + 1 \le q_1 < q_1+1 \le \nu_{p_1}^{(r)}+1,$$ so that $(p_1+1,q_1+1) \in \mathcal{B}^{(r)}_{\mu/\nu^{(r)}}$.
However, $(p_1+1,q_1+1)\notin \mathcal{T}^{(r)}_{\mu/\nu^{(r)}}$ since $\nu_{p_1+1}^{(r)} \le q_1-1$ implies $\nu_{p_1+1}^{(r)} +1 \not= q_1+1$.  Therefore, $(p_1+1,q_1+1)\in \mathcal{N}^{(r)}_{\mu/\nu^{(r)}}$, recalling that $\mathcal{N}^{(r)}_{\mu/\nu^{(r)}}$ is the set difference $\mathcal{B}^{(r)}_{\mu/\nu^{(r)}} \setminus \mathcal{T}^{(r)}_{\mu/\nu^{(r)}}$.
\end{proof}

\begin{proof}[Proof of part B]
If $\h(r,\tc)$ does not cover 
$\td=(p_2,q_2),$ then $p_2>p_1$ by the \allowhook~definition.  Therefore,
$q_2=\nu_{p_2}^{(r-1)}+1=\nu_{p_2}^{(r)}+1$
so that $\td\in{\calT}^{(r)}_{\mu/\nu^{(r)}}$.
\end{proof}

\begin{proof}[Proof of part C]
If $\h(r,\tc)$ covers
$\td=(p_2,q_2)\in {\calT}^{(r-1)}_{\mu/\nu^{(r-1)}},$ then
$p_2<p_1$ since $\tc \not= \td$
and at most one cell of each row of
$D^{(r-1)}_{\mu/\nu^{(r-1)}}$
belongs to 
${\calT}^{(r-1)}_{\mu/\nu^{(r-1)}}$.
Therefore, $\h(r,\tc)$ includes all boundary cells from row $p_2+1$.  The \allowhook~$\h(r,\tc)$ does not cover
$(p_2+1,q_2+1)$ since $(p_2+1,q_2+1) \notin \mathcal{B}^{(r-1)}_{\mu/\nu^{(r-1)}}$ by Lemma~\ref{lem:bdrydiag}.  Since $\td \in {\calT}^{(r-1)}_{\mu/\nu^{(r-1)}}$, we have
$\nu_{p_2}^{(r-1)}={q_2-1}.$   This means $(p_2, q_2-1) \notin \h(r,\tc)$, so that all cells in row $p_2$ covered by $\h(r,\tc)$ lie weakly to the right of $(p_2,q_2)$. 
 The fact that $q_2+1 > \nu_{p_2}^{(r-1)}$ implies the cells in row $p_2+1$ covered by $\h(r,\tc)$ must lie strictly to the left of $(p_2+1, q_2+1)$.  Since $\h(r,\c_r)$ is connected by Lemma \ref{lem:takesall} and includes cells in row $p_2+1$ (since $p_2+1 \le p_1$), the cell $(p_2+1,q_2)$ must be covered by $\h(r,\tc)$.  Since $(p_2+1,q_2)$ is covered by $\h(r, \tc)$ but $(p_2+1,q_2+1)$ is not covered by $\h(r,\tc),$ we have $q_2+1 = \nu_{p_2+1}^{(r)} +1$ and $(p_2+1,q_2+1) \in \mathcal{T}^{(r)}_{\mu/\nu^{(r)}}$.
\end{proof}

\begin{proof}
[Proof of part D]
Let $\n=(p_3,q_3)$ be a \bordercell~which is not a \allowcell~
in $D^{(r-1)}_{\mu/\nu^{(r-1)}}$; 
i.e., $\n \in {\calN}^{(r-1)}_{\mu/\nu^{(r-1)}}$.  Assume $\h(r,\tc)$ does not cover $\n$.
Therefore, $p_1<p_3$ and $\nu_{p_3}^{(r)}=\nu_{p_3}^{(r-1)}$.  So
$\n$ is a boundary cell in $D^{(r)}_{\mu/\nu^{(r)}}$ but not  a \allowcell~in $D^{(r)}_{\mu/\nu^{(r)}}$ since otherwise $\n$ would be a \allowcell~in $D^{(r-1)}_{\mu/\nu^{(r-1)}}$.  Therefore, $\n \in {\calN}^{(r)}_{\mu/\nu^{(r)}}.$
\end{proof}

\begin{proof}
[Proof of part E]
Let $\n=(p_3,q_3)$ be a \bordercell~which is not a \allowcell; 
i.e., $\n \in {\calN}^{(r-1)}_{\mu/\nu^{(r-1)}}$, and assume $\h(r,\tc)$ covers $\n$.  Assume $p_3<k$; otherwise $(p_3+1,q_3+1) \notin D^{(r)}_{\mu/\nu^{(r)}}$.  We first prove $(p_3+1,q_3+1) \in {\calB}^{(r)}_{\mu/\nu^{(r)}}$ and then show $(p_3+1,q_3+1) \notin {\calT}^{(r)}_{\mu/\nu^{(r)}}.$  

The assumption that $\h(r,\tc)$ covers $\n$ implies $q_3 \le \nu^{(r)}_{p_3}$.  Adding $1$ to each side of this inequality gives 
\begin{equation}{\label{eq:rhsbd}}
    q_3+1 \le \nu^{(r)}_{p_3}+1.
\end{equation}
Since $(p_3,q_3) \in {\calB}^{(r-1)}_{\mu/\nu^{(r-1)}},$ Lemma~\ref{lem:bdrydiag} implies that $(p_3+1,q_3+1) \notin {\calB}^{(r-1)}_{\mu/\nu^{(r-1)}}.$  
Thus, $(p_3+1,q_3+1)$ is not covered by $\h(r,\tc)$, since $\h(r,\tc)$ only covers boundary cells.  
Therefore, $\nu^{(r)}_{p_3+1} <q_3+1$ and thus 
\begin{equation}{\label{eq:lhsbd}}
\nu^{(r)}_{p_3+1}+1 \le q_3+1.
\end{equation}
Together Equations~\eqref{eq:rhsbd} and~(\ref{eq:lhsbd}) imply that 
$$\nu^{(r)}_{p_3+1}+1 \le q_3+1 \le \nu^{(r)}_{p_3}+1,$$ meaning $(p_3+1,q_3+1) \in {\calB}^{(r)}_{\mu/\nu^{(r)}}$.

Now we show $(p_3+1,q_3+1) \notin {\calT}^{(r)}_{\munurplus}$, by proving $q_3+1 \not= \nu^{(r)}_{p_3+1}+1$.  Since $(p_3,q_3) \notin {\calT}^{(r-1)}_{\murmin/\nu^{(r-1)}},$ the fact that $(p_3,q_3)$ is covered by $\h(r,\tc)$ also implies that $(p_3,q_3-1)$ is covered by $\h(r,\tc)$ by the \allowhook~definition.  So $(p_3,q_3-1) \in {\calB}^{(r-1)}_{\murmin/\nu^{(r-1)}}$.  Therefore, we have $(p_3+1, q_3) \notin {\calB}^{(r-1)}_{\murmin/\nu^{(r-1)}}$ by Lemma~\ref{lem:bdrydiag}.  Then, $\nu_{p_3+1}^{(r)} < q_3$, so $\nu_{p_3+1}^{(r)}+1 < q_3+1$.  Therefore, $(p_3+1,q_3+1) \notin {\calT}^{(r)}_{\munurplus}$.  
So $(p_3+1,q_3+1) \in {\calN}^{(r)}_{\munurplus}$, as desired.
\end{proof}

\begin{lem}{\label{lem:notin}}
Suppose that $h(r,\c_{r})$ is a tunnel hook in the partial \GBPR~diagram $D_{\mu/\nu^{(r-1)}}^{(r-1)}$ and consider the partial \GBPR~diagram
 $D_{\mu/\nu^{(r)}}^{(r)}$ resulting from applying Step 2 of Procedure~\ref{A:THC}.
    If 
    $(p,q)\in {\calB}^{(r)}_{\mu/\nu^{(r)}},$ then 
    either $(p,q) \in {\calB}^{(r-1)}_{\mu/\nu^{(r-1)}}$ or $(p-1,q-1)\in \h(r,\c_r).$ 
\end{lem}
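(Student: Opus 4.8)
The plan is to prove the contrapositive-flavored disjunction directly: assuming $(p,q) \in \mathcal{B}^{(r)}_{\mu/\nu^{(r)}}$ but $(p,q) \notin \mathcal{B}^{(r-1)}_{\mu/\nu^{(r-1)}}$, I will deduce that $(p-1,q-1) \in \h(r,\c_r)$. The starting point is the boundary cell inequality from Definition~\ref{Def:Bordcells}. Since $(p,q)$ is a boundary cell of the new diagram, we have $\nu^{(r)}_p + 1 \le q \le \nu^{(r)}_{p-1}+1$ (for $p > r$). The key observation is that Step~\ref{St:grey} of Procedure~\ref{A:THC} only \emph{increases} the $\nu$-values: $\nu^{(r)}_i = \nu^{(r-1)}_i + \eta^{(r)}_i$ with $\eta^{(r)}_i \ge 0$. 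So the left end of the boundary interval can only move right, while the right end (governed by row $p-1$) can also move right. The failure of $(p,q)$ to be a boundary cell in the old diagram must therefore be caused by the right inequality failing before the update, i.e.\ $q > \nu^{(r-1)}_{p-1}+1$, which forces $\eta^{(r)}_{p-1} > 0$, meaning row $p-1$ contains cells of $\h(r,\c_r)$.

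First I would record the two inequalities carefully. From $(p,q) \in \mathcal{B}^{(r)}$ we get $\nu^{(r)}_p+1 \le q \le \nu^{(r)}_{p-1}+1$. From $(p,q) \notin \mathcal{B}^{(r-1)}$ we know at least one of $\nu^{(r-1)}_p+1 \le q$ or $q \le \nu^{(r-1)}_{p-1}+1$ fails. Since $\nu^{(r-1)}_p \le \nu^{(r)}_p$, the lower inequality $\nu^{(r)}_p+1 \le q$ already forces $\nu^{(r-1)}_p + 1 \le q$, so the \emph{lower} bound cannot be the one that fails. Hence the upper bound must fail in the old diagram: $q > \nu^{(r-1)}_{p-1}+1$, that is $q \ge \nu^{(r-1)}_{p-1}+2$. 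Combined with the surviving upper bound $q \le \nu^{(r)}_{p-1}+1 = \nu^{(r-1)}_{p-1} + \eta^{(r)}_{p-1} + 1$, this yields $\eta^{(r)}_{p-1} \ge 1$, so $\h(r,\c_r)$ genuinely covers at least one cell in row $p-1$.

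The remaining work is to pin down that the specific cell $(p-1,q-1)$ is among the covered cells of row $p-1$, not merely that \emph{some} cell of that row is covered. Here I would use that $\h(r,\c_r)$, being a tunnel hook, covers exactly the boundary cells of the old diagram in each included row, which in row $p-1$ form the contiguous block from column $\nu^{(r-1)}_{p-1}+1$ out to column $\nu^{(r-1)}_{p-2}+1$ (using connectedness, Lemma~\ref{lem:takesall}). I then need $\nu^{(r-1)}_{p-1}+1 \le q-1 \le \nu^{(r-1)}_{p-2}+1$. The left inequality is exactly the rearrangement of $q \ge \nu^{(r-1)}_{p-1}+2$ established above. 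For the right inequality, $q-1 \le \nu^{(r-1)}_{p-2}+1$, I would invoke that $(p,q)$ lies in the boundary band together with the partition property $\nu^{(r-1)}_{p-1} \le \nu^{(r-1)}_{p-2}$: from $q \le \nu^{(r)}_{p-1}+1$ and the hook covering all of row $p-1$'s boundary up through column $\nu^{(r)}_{p-1}$, the cell $(p-1, q-1)$ sits within the covered block, so $(p-1,q-1) \in \h(r,\c_r)$.

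I expect the main obstacle to be the boundary-case bookkeeping rather than the core inequality chase: specifically, handling row $p = r$ (where the second, $a_r+b_r+c_r$ clause of Definition~\ref{Def:Bordcells} governs boundary membership and the ``row $p-1$'' reasoning must be replaced) and confirming that the contiguous covered block in row $p-1$ really does extend far enough rightward to include column $q-1$ rather than stopping short. The cleanest way around the latter is to lean on Lemma~\ref{lem:takesall} (connectedness of the hook) together with the fact that the hook terminates at $\c_r$ and absorbs every boundary cell of each row it meets, so once $\eta^{(r)}_{p-1} \ge 1$ the hook takes \emph{all} boundary cells of row $p-1$, which includes $(p-1,q-1)$ by the inequalities just derived.
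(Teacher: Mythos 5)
Your proposal is correct and is essentially the paper's own argument run in the contrapositive direction: the paper assumes $(p-1,q-1)\notin \h(r,\c_r)$ and deduces $(p,q)\in\mathcal{B}^{(r-1)}_{\mu/\nu^{(r-1)}}$, while you assume $(p,q)\notin\mathcal{B}^{(r-1)}_{\mu/\nu^{(r-1)}}$ and deduce $(p-1,q-1)\in\h(r,\c_r)$, but both rest on the identical inequality chain ($\nu^{(r-1)}_p\le\nu^{(r)}_p$ preserves the lower bound, so only the upper bound can change) and on the fact that the hook's cells in row $p-1$ occupy exactly columns $\nu^{(r-1)}_{p-1}+1$ through $\nu^{(r)}_{p-1}$. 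The edge case you flag (the $\max\{\nu_r+1,a_r+b_r+c_r\}$ clause of Definition~\ref{Def:Bordcells}) is likewise left implicit in the paper's proof, so you are on the same footing.
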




\begin{proof}
Suppose $(p,q)\in {\calB}^{(r)}_{\mu/\nu^{(r)}}.$  Thus, from Definition
 \ref{Def:Bordcells}, we have
 $\nu_p^{(r)}+1\le q \le \nu_{p-1}^{(r)}+1.$  Assume $(p-1,q-1) \notin \h(r, \c_r)$, since otherwise we are done.  Since $q \le \nu_{p-1}^{(r)}+1,$ we have $q-1 \le \nu_{p-1}^{(r)}$.  Since $(p-1,q-1) \notin \h(r,\c_r)$ but $q-1 \le \nu_{p-1}^{(r)},$ we must have $q-1 \le \nu_{p-1}^{(r-1)}$.  So $q \le \nu_{p-1}^{(r-1)}+1$.  Also, $\nu_p^{(r-1)} \le \nu_p^{(r)}$, so $\nu_p^{(r-1)}+1 \le \nu_p^{(r)}+1 \le q$.  Therefore $\nu_p^{(r-1)} +1 \le q$.  Putting these together implies $$\nu_p^{(r-1)}+1 \le q \le \nu_{p-1}^{(r-1)}+1.$$  Therefore, by definition, $(p,q) \in \calB_{\mu/\nu^{(r-1)}}^{(r-1)}$.
 \end{proof}

Lemma~\ref{T:Losecells} implies an algorithm for identifying the cells in
$\mathcal{T}^{(r-1)}_{\mu/\nu^{(r-1)}}$ available to become terminal cells at each step $r$ in the construction of a \THF.  Along the way, we uncover a permutation associated with each \THF.  This algorithm to identify \allowcells~and produce the associated permutation is described below.  The subscripts on the $\mathbb{T}$ emphasize the fact that each new collection of cells depends on the choice of cells in the previous step.

\begin{proc}{\label{Pr:perm}}
Let $\mu \in \mathbb{Z}^k$ and set $\nu=\nu^{(0)} = \emptyset$.  The following algorithm constructs a sequence of cells (which we will see are terminal cells for a \allowhookfilling) and also produces a permutation associated to each choice of cells.

\begin{enumerate}
\item Let $\mathbb{T}_{\mu/\nu^{(0)}} = \{(1,1),(2,1),\ldots, (k,1)\}$ be the collection of all cells in the leftmost column of the \GBPR~diagram $D_{\mu}$.

\item Select a cell $\c_1=(p_1,1)$ from $\mathbb{T}_{\mu/\nu^{(0)}}$ and set $$\mathbb{T}_{\mu/\nu^{(1)}}=\{(2,2),(3,2),\ldots, (p_1,2),(p_1+1,1),\ldots,(k,1)\};$$ $\mathbb{T}_{\mu/\nu^{(1)}}$ is the set constructed from $\mathbb{T}_{\mu/\nu^{(0)}}$ by removing $(p_1,1)$ and adding $(1,1)$ to each cell in $\mathbb{T}_{\mu/\nu^{(0)}}$ situated in a row lower than row $p_1$.

\item Let $\sigma_1=p_1-1+1=p_1$.  Note that $\mathcal{L}_{\sigma_1}$ (see Equation~\eqref{E:LA}) is the diagonal containing $(p_1,1)$.

\item Repeat the following steps, once for each value of $r$ from $2$ to $k$.

\begin{enumerate}
\item Select a cell $\c_r=(p_r,q_r)$ from $\mathbb{T}_{\mu/\nu^{(r-1)}}$.

\item{\label{add11}} Construct $\mathbb{T}_{\mu/\nu^{(r)}}$ from $\mathbb{T}_{\mu/\nu^{(r-1)}}$ by removing $\c_r$ and adding $(1,1)$ to each of cell in $\mathbb{T}_{\mu/\nu^{(r-1)}}$ from a row lower than row $p_r$.

\item Let $\sigma_r=p_r-q_r+1$.  Note that $\mathcal{L}_{\sigma_r}$ is the diagonal containing $(p_r,q_r)$.
\end{enumerate}

\item Set $\sigma=(\sigma_1, \sigma_2, \hdots , \sigma_k)$.
\end{enumerate}
\end{proc}

\begin{lem}{\label{L:diags}}
The cells in the set $\mathbb{T}_{\mu/\nu^{(r-1)}}$ are precisely the \allowcells~in $\mathcal{T}^{(r-1)}_{\mu/\nu^{(r-1)}}$ for $1 \le r \le k$.  Furthermore, the resulting sequence $\sigma=(\sigma_1, \hdots , \sigma_k)$ is a permutation in $S_k$.
\end{lem}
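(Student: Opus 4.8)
The plan is to prove the set-equality claim by induction on $r$, using Lemma~\ref{T:Losecells} and Lemma~\ref{lem:notin} as the engines that translate the abstract bookkeeping of Procedure~\ref{Pr:perm} into the geometry of the partial diagrams, and then to deduce that $\sigma$ is a permutation from the single observation that the $(1,1)$-shift $(p,q)\mapsto(p+1,q+1)$ appearing in Procedure~\ref{Pr:perm} preserves the diagonal index $p-q+1$. For the base case $r=1$, since $\nu^{(0)}=(0,\ldots,0)$ the tunnel-cell condition $q=\nu_p+1$ forces $q=1$, and the boundary-cell inequalities of Definition~\ref{Def:Bordcells} collapse to $1\le q\le 1$ for $p>1$ and $1\le q\le\max\{1,a_1+b_1+c_1\}$ for $p=1$; hence each $(p,1)$ with $1\le p\le k$ is a tunnel cell and there are no others, giving $\mathcal{T}^{(0)}_{\mu/\nu^{(0)}}=\{(1,1),\ldots,(k,1)\}=\mathbb{T}_{\mu/\nu^{(0)}}$.

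For the inductive step I would assume $\mathbb{T}_{\mu/\nu^{(r-1)}}=\mathcal{T}^{(r-1)}_{\mu/\nu^{(r-1)}}$ and let $\c_r=(p_r,q_r)$ be the chosen terminal cell of $\h(r,\c_r)$. By construction, $\mathbb{T}_{\mu/\nu^{(r)}}$ is obtained by deleting $\c_r$, keeping each $(p,q)$ with $p>p_r$, and replacing each $(p,q)$ with $p<p_r$ by $(p+1,q+1)$. The kept cells are tunnel cells in rows above $p_r$, hence uncovered by $\h(r,\c_r)$, so part B of Lemma~\ref{T:Losecells} keeps them tunnel cells; the replaced cells are tunnel cells in rows $r\le p<p_r$, hence covered boundary cells, so part C of Lemma~\ref{T:Losecells} makes each $(p+1,q+1)$ a tunnel cell. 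This gives $\mathbb{T}_{\mu/\nu^{(r)}}\subseteq\mathcal{T}^{(r)}_{\mu/\nu^{(r)}}$.

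For the reverse inclusion I would take an arbitrary new tunnel cell $(p,q)$ and use Lemma~\ref{lem:notin}: either $(p,q)\in\mathcal{B}^{(r-1)}_{\mu/\nu^{(r-1)}}$ or $(p-1,q-1)\in\h(r,\c_r)$. In the first case $(p,q)$ is an old boundary cell, so old tunnel or old $\mathcal{N}$; a covered old tunnel cell has $q=\nu^{(r-1)}_p+1\le\nu^{(r)}_p$ and is absorbed into $\nu^{(r)}$ (hence not a boundary cell of $D^{(r)}$), while parts D and E keep $\mathcal{N}$-cells in $\mathcal{N}$, so the only surviving possibility is an uncovered old tunnel cell, i.e.\ a kept cell. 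In the second case $(p-1,q-1)$ is a covered boundary cell; since part A sends the successor of the terminal cell into $\mathcal{N}$ and part E sends covered $\mathcal{N}$-cells into $\mathcal{N}$, the cell $(p-1,q-1)$ must be a covered tunnel cell other than $\c_r$, so $(p,q)$ is a replaced cell. Thus $\mathcal{T}^{(r)}_{\mu/\nu^{(r)}}\subseteq\mathbb{T}_{\mu/\nu^{(r)}}$, completing the induction. For the permutation claim, the $(1,1)$-shift fixes each diagonal $\mathcal{L}_j$ of Equation~\eqref{E:LA}, so a cell never changes its diagonal while it sits in the $\mathbb{T}$-sets; since $\mathbb{T}_{\mu/\nu^{(0)}}$ occupies $\mathcal{L}_1,\ldots,\mathcal{L}_k$, an induction shows $\mathbb{T}_{\mu/\nu^{(r)}}$ occupies exactly $\{1,\ldots,k\}\setminus\{\sigma_1,\ldots,\sigma_r\}$, so each new $\sigma_{r+1}$ is distinct from its predecessors and lies in $\{1,\ldots,k\}$, forcing $\sigma\in S_k$.

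I expect the main obstacle to be the reverse inclusion: one must rule out a new tunnel cell arising from anything other than an uncovered old tunnel cell or the diagonal successor of a covered old tunnel cell. The care lies in using Lemma~\ref{lem:notin} to locate the origin of an arbitrary new boundary cell and then invoking precisely the correct clause of Lemma~\ref{T:Losecells} (together with the absorption of a covered tunnel cell into $\nu^{(r)}$) to exclude the $\mathcal{N}$-cell and terminal-successor cases. A clean alternative that bypasses this case analysis is a counting argument: by Lemma~\ref{L:partition} and the first-row clause of Definition~\ref{Def:Bordcells}, each active row $r+1\le p\le k$ of $D^{(r)}_{\mu/\nu^{(r)}}$ contains exactly one tunnel cell, so $|\mathcal{T}^{(r)}_{\mu/\nu^{(r)}}|=k-r=|\mathbb{T}_{\mu/\nu^{(r)}}|$ and the forward inclusion already yields equality.
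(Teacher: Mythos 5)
Your proposal is correct and follows essentially the same route as the paper: the base case, the forward inclusion via parts B and C of Lemma~\ref{T:Losecells}, and the diagonal-preservation argument for the permutation claim all match, and the cardinality argument you offer as an ``alternative'' for the reverse inclusion ($|\mathcal{T}^{(r)}_{\mu/\nu^{(r)}}|=k-r=|\mathbb{T}_{\mu/\nu^{(r)}}|$ together with the forward inclusion) is exactly what the paper uses. Your more detailed case analysis via Lemma~\ref{lem:notin} is sound but not needed once the counting argument is in hand.
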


\begin{proof}
 Let $\mathbb{T}_{\mu/\nu^{(0)}}=\{(1,1),(2,1),\ldots, (k,1)\}$ be the collection of cells in the leftmost column of $D^{(0)}_{\mu/\nu^{(0)}}$.  At the beginning of Step $r=1$ of the procedure, it is clear that $\mathbb{T}_{\mu/\nu^{(0)}}= \mathcal{T}^{(0)}_{\mu/\nu^{(0)}}$.  Note that each cell of $\mathcal{T}^{(0)}_{\mu/\nu^{(0)}}$
belongs to a unique diagonal $\mathcal{L}_j$ for $1 \le j \le k.$

Let $\tau_1=(p,1) \in \mathcal{L}_p$ be the cell chosen as terminal cell in Step 1 of Procedure~\ref{Pr:perm}.  Then Lemma~\ref{T:Losecells} (A) implies $(p+1,2)\in \mathcal{N}^{(1)}_{\mu/\nu^{(1)}}.$  Furthermore, by repeated application of Lemma~\ref{lem:notin} (D) and (E), no elements of $\mathcal{L}_p$ other than $(p,1)$ can be in any of the collections $\mathcal{T}^{(r)}_{\mu/\nu^{(r)}}$ for any $1 \le r \le k-1.$ 

Consider an arbitrary cell $(s,1)$ with $s \not= p$.  By Lemma~\ref{T:Losecells} (C), if  $\h(1,\tau_1)$ covers
the cell $(s,1)\in \mathcal{T}^{(0)}_{\mu/\nu^{(0)}}$, then $(s+1,2) \in \mathcal{T}^{(1)}_{\mu/\nu^{(1)}}$.  If $\h(1,\tau_1)$ does not cover
the cell $(s,1)$, then Lemma~\ref{T:Losecells} (B) implies
$ (s,1)\in\mathcal{T}^{(1)}_{\mu/\nu^{(1)}}.$
This proves the containment $$\mathbb{T}_{\mu / \nu^{(1)}}=\{(2,2),(3,2),\ldots, (p,2),(p+1,1),\ldots,(k,1)\} \subseteq \mathcal{T}^{(1)}_{\mu/\nu^{(1)}}.$$  Since there are exactly $k-1$ cells in $\mathcal{T}^{(1)}_{\mu / \nu^{(1)}}$, the fact that there are $k-1$ cells in $\mathbb{T}_{\mu / \nu^{(1)}}$ implies that $\mathbb{T}_{\mu / \nu^{(1)}}=\mathcal{T}^{(1)}_{\mu/\nu^{(1)}}.$

This argument can be repeated to show that each collection $\mathbb{T}_{\mu/\nu^{(r)}}$ is equal to the set $\mathcal{T}^{(r)}_{\mu/\nu^{(r)}}$ and includes at most one cell from $\mathcal{L}_j$ for each $1 \le j \le k$, since adding $(1,1)$ to a cell does not change the diagonal in which it lies.  

The construction of the cells in $\mathbb{T}_{\mu/\nu^{(r)}}$ from those in $\mathbb{T}_{\mu/\nu^{(r-1)}}$ removes a cell in row $r$ or higher and increases the row value for each cell lower than the removed cell.   Therefore by induction, the cells in $\mathbb{T}_{\mu/\nu^{(r)}}$ all lie in rows strictly higher than row $r$.

Finally, repeated application of Lemma~\ref{T:Losecells} (A, D, E) implies that if $\mathcal{L}_{\sigma_r}$ is the diagonal containing the cell $\c_r$ removed during Step $r$, then no cell from diagonal $\mathcal{L}_{\sigma_r}$ can appear in $\mathcal{T}^{(i)}_{\mu/\nu^{(i)}}$ for $i>r$.  Therefore each of the $k$ diagonals $\mathcal{L}_{\sigma_1}, \mathcal{L}_{\sigma_2}, \hdots , \mathcal{L}_{\sigma_k}$ removed during the $k$ steps of the procedure is distinct, and satisfy $1 \le \sigma_j \le k$ for all $j$.  This implies that $\sigma$ is indeed a permutation in $S_k$.
\end{proof}

Lemma~\ref{L:diags} in fact proves that Procedure~\ref{Pr:perm} is equivalent to the \THF~construction procedure, since selecting a \allowhook~starting in row $r$ can be done by simply selecting its terminal cell.  Every possible terminal cell for a \allowhook~starting in row $r$ (for the diagram $D^{(r-1)}_{\mu/\nu^{(r-1)}}$ created from the selection of the first $r-1$ \allowhooks) is included in $\mathbb{T}_{\mu/\nu^{(r-1)}}$.  

The following example demonstrates this construction.  

\begin{ex}
Let $\mu$ be a permutation of length $k=10$ and consider step $i=6$ in the \allowhookfilling~construction.  Assume the first $5$ \allowhooks~have been constructed and $\sigma_i \in \{2,3,6,7,9\}$ for $1 \le i \le 5$.  Then the \allowcells~in $\mathbb{T}_{\mu/\nu^{(5)}}$ are in diagonals $\mathcal{L}_1$,
$\mathcal{L}_{4}$, $\mathcal{L}_{5}$, $\mathcal{L}_{8}$, and $\mathcal{L}_{10}$.  If a cell is in $\mathcal{L}_j$, any cell northwest of this cell will be in a higher diagonal, which means that the \allowcell~in row $6$ must be in $\mathcal{L}_1$, the \allowcell~in row $7$ must be in $\mathcal{L}_4$, etc.  Therefore, $$\mathbb{T}_{\mu/\nu^{(5)}} = \{(6,6),(7,4),(8,4),(9,2),(10,1)\}.$$  If, for example, the cell $(8,4)$ is selected as the terminal cell for the next \allowhook, then the new collection of \allowcells~will become 
$$\mathbb{T}_{\mu/\nu^{(6)}} = \{(7,7),(8,5),(9,2),(10,1)\},$$ since $(1,1)$ is added to the first two entries of $\mathbb{T}_{\mu/\nu^{(5)}}$ and $(8,4)$ is removed.

If, instead, the cell $(9,2)$ is selected from $\mathbb{T}_{\mu/\nu^{(5)}}$ for the next terminal cell, the new collection of \allowcells~will become $$\mathbb{T}_{\mu/\nu^{(6)}} = \{(7,7),(8,5),(9,5),(10,1)\},$$ since $(1,1)$ is added to the first three entries of $\mathbb{T}_{\mu/\nu^{(5)}}$ and $(9,2)$ is removed.
\end{ex}

The following proposition, which states that there is a bijection between
\allowhookfillings~and permutations, will be used in the proof of our combinatorial interpretation of the $\NSym$ Jacobi-Trudi determinant.

\begin{prop}
Let $\mu=(\mu_1, \hdots , \mu_k)$ be a sequence.   
There is a bijection between \allowhookfillings~of the \GBPR~diagram for $\mu$ and  permutations $\sigma\in S_k.$
\label{T:perm}
\end{prop}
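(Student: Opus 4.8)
The plan is to exhibit an explicit two-sided inverse to the map sending a \allowhookfilling~to its associated permutation, leveraging the framework already assembled in Lemma~\ref{L:diags}. By the remark immediately following that lemma, building a \allowhookfilling~via Procedure~\ref{A:THC} is the same as running Procedure~\ref{Pr:perm}: a \allowhook~starting in row $r$ is determined by its terminal cell, and the admissible terminal cells at step $r$ are exactly the cells of $\mathbb{T}_{\mu/\nu^{(r-1)}} = \mathcal{T}^{(r-1)}_{\mu/\nu^{(r-1)}}$. Hence the forward map $\Phi$ sending a \allowhookfilling~$\gamma$ to the sequence $\sigma=(\sigma_1,\ldots,\sigma_k)$ produced by Procedure~\ref{Pr:perm} is well-defined and lands in $S_k$ — this is precisely the content of Lemma~\ref{L:diags}. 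So the real work is constructing $\Psi\colon S_k \to \SHF_{\mu}$ and verifying the two maps are mutually inverse.

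First I would define $\Psi$ by reconstructing terminal cells one row at a time. Starting from $\mathcal{T}^{(0)}_{\mu/\nu^{(0)}}$, the leftmost column of $D_{\mu}$, whose cells occupy the distinct diagonals $\mathcal{L}_1,\ldots,\mathcal{L}_k$, at step $r$ I select as $\c_r$ the unique cell of $\mathcal{T}^{(r-1)}_{\mu/\nu^{(r-1)}}$ lying in diagonal $\mathcal{L}_{\sigma_r}$ and take $\h(r,\c_r)$ to be the \allowhook~terminating there. The step requiring the most care — and the one I expect to be the main obstacle — is showing this is well-defined, i.e. that at every step $r$ there genuinely is a (necessarily unique) \allowcell~in diagonal $\mathcal{L}_{\sigma_r}$. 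For this I would track the occupied diagonals: Lemma~\ref{L:diags} gives that $\mathcal{T}^{(r-1)}_{\mu/\nu^{(r-1)}}$ has exactly $k-r+1$ cells in distinct diagonals among $\mathcal{L}_1,\ldots,\mathcal{L}_k$, while repeated application of Lemma~\ref{T:Losecells}(A, D, E) guarantees that once a diagonal supplies a terminal cell no later \allowcell~lies in it. Therefore the diagonals occupied at step $r$ are exactly $\{1,\ldots,k\}\setminus\{\sigma_1,\ldots,\sigma_{r-1}\}$; since $\sigma$ is a permutation, $\sigma_r$ lies in this set, so the required cell exists and is unique. That adding $(1,1)$ in Step~\ref{add11} of Procedure~\ref{Pr:perm} preserves the diagonal of each cell keeps this bookkeeping consistent with the row-shifting.

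Finally I would check that $\Phi$ and $\Psi$ are mutually inverse. Applying $\Phi$ after $\Psi$ returns $\sigma$ by construction, since the cell $\Psi$ selects at step $r$ lies in $\mathcal{L}_{\sigma_r}$ and hence contributes the value $\sigma_r$. Conversely, beginning with a \allowhookfilling~$\gamma$ and its permutation $\sigma=\Phi(\gamma)$, applying $\Psi$ recovers $\gamma$: at step $r$ the terminal cell of $\gamma$ lies in $\mathcal{L}_{\sigma_r}$ and is the only cell of $\mathcal{T}^{(r-1)}_{\mu/\nu^{(r-1)}}$ there, so $\Psi$ selects that same cell and rebuilds the same \allowhook. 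Injectivity and surjectivity follow at once, which establishes the bijection. The only genuinely delicate point is the diagonal-availability argument underpinning the well-definedness of $\Psi$, and it reduces entirely to combining the transition statements of Lemma~\ref{T:Losecells} with the diagonal-preservation already recorded in Lemma~\ref{L:diags}.
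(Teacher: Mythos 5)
Your proposal is correct and follows essentially the same route as the paper: the forward map reads off the diagonal of each terminal cell (justified by Lemma~\ref{L:diags}), and the inverse reconstructs the terminal cells one row at a time using the facts that the $(1,1)$-shift preserves diagonals and that a removed diagonal never reappears. The only difference is presentational — the paper pins down the explicit coordinates $\c_r=(\sigma_r+m,1+m)$ with $m=\#\{i<r : \sigma_i>\sigma_r\}$, whereas you establish existence and uniqueness of the cell in $\mathcal{L}_{\sigma_r}$ by counting occupied diagonals; both arguments rest on the same lemmas.
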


\begin{proof}
We first show how to determine the permutation from the \allowhookfilling~of the \GBPR~diagram.  Let $$\gamma=\{\h(1,(p_1,q_1)),\h(2,(p_2,q_2)),\ldots, \h(k,(p_k,q_k))\}$$ be a \allowhookfilling~(\THC) of the \GBPR~diagram for $\mu$, where $\c_i=(p_i,q_i)$ for $1 \le i \le k$.  Let $\sigma(\gamma)$ be the permutation whose $i^{th}$ part is given by $p_i-q_i+1$.  This is precisely the permutation produced by Procedure~\ref{Pr:perm}. 
 Lemma~\ref{L:diags} guarantees that $\sigma$ is indeed a permutation in $S_k$.

To recover the \THF~from the permutation $\sigma=(\sigma_1, \sigma_2, \hdots , \sigma_k)$, it is enough to determine the terminal cells $\{\c_1, \c_2, \hdots , \c_k\}$.  To this end, set $\c_r = (\sigma_r+m,1+m)$, where 
$$m= \# \{ i \in \{1,\ \hdots, k \} | i<r \textrm{ and } \sigma_i > \sigma_r \}.$$  We show that these cells are indeed the terminal cells for a \THF~by proving inductively that $\c_r \in \mathbb{T}_{\mu/\nu^{(r-1)}}$ for $1 \le r \le k$.  

For the base case, note that $\c_1=(\sigma_1,1)$, since there is no value $i \in \{ 1 , \hdots , k \}$ such that $i<1$.  Since $(\sigma_1, 1) \in \mathbb{T}_{\mu/\nu^{(0)}}$, the base case is satisfied.

Next assume that $\c_j \in \mathbb{T}_{\mu/\nu^{(j-1)}}$ for $1 \le j < r$.  The cell in diagonal $\mathcal{L}_{\sigma_{r}}$ of $\mathbb{T}_{\mu/\nu^{(0)}}$ is $(\sigma_{r},1)$.  With each choice of $\c_j$ (for $j<r$) selected during Procedure~\ref{Pr:perm}, this cell either remains the same (if $\sigma_j < \sigma_r$) or is augmented by $(1,1)$ (if $\sigma_j > \sigma_r$).  But this implies that $\c_{r} \in \mathbb{T}_{\mu/\nu^{(r-1)}}$ since $m$ counts the number of times (in steps $1$ through $r-1$) the coordinates of this cell is increased by $(1,1)$.

Thus, the cells $\{\c_1, \c_2, \hdots , \c_k\}$ are indeed \allowcells~and produce precisely the \allowhookfilling~associated to $\sigma$ from Procedure~\ref{Pr:perm}. 
 Therefore, the map described is indeed a bijection between \allowhookfillings~of $D_{\mu}$ and permutations in $S_k$.
\end{proof}

\begin{ex}
\label{E:gettunnel}
Let $\sigma=(4,7,3,1,6,2,5) \in S_{7}.$  Then $\c_1=(4,1)$.  Since $m=0$ for $\sigma_2$, we have $\c_2=(\sigma_2+0,1+0)=(7,1)$.  For $\sigma_3$, we have $m=2$ since both $\sigma_1$ and $\sigma_2$ are greater than $\sigma_3=3$.  So $\c_3=(3+2,1+2)=(5,3)$.  Continuing this process produces the overall collection $$\{(4,1),(7,1),(5,3),(4,4),(7,2),(6,5),(7,3)\}$$ of terminal cells for the \allowhookfilling~corresponding to $\sigma$.  Note that these terminal cells are in diagonals $\mathcal{L}_4, \mathcal{L}_7, \mathcal{L}_3, \mathcal{L}_1, \mathcal{L}_6, \mathcal{L}_2, \mathcal{L}_5,$ respectively.    
\end{ex}

The bijection between \THF's and permutations allows us to go one step further in giving a combinatorial interpretation for the partitions \\$(\nu^{(r)}_{r+1}, \nu^{(r)}_{r+2}, \hdots , \nu^{(r)}_k)$ appearing in the partial \GBPR~diagrams $D^{(r)}_{\mu/\nu^{(r)}}$ associated to $\sigma=(\sigma_1,\sigma_2,\hdots,\sigma_k).$  In the following, assume $\sigma\in S_k$ and $m<k$.  Let $\beta_{\sigma,m}$ be the collection $\{\sigma_{m+1},\sigma_{m+2}, \cdots, \sigma_{k}\}$ and set $\beta_{\sigma,m,j}$ to be the $j^{th}$ smallest element of $\beta_{\sigma,m}$.

\begin{ex}
If $\sigma  =(4,3,7,9,2,6,5,8,1)$
then $\beta_{\sigma,4}=\{1,2,5,6,8\}$
and $\beta_{\sigma,4,3}=5.$
\end{ex}

Note that the set $\beta_{\sigma,m}$ is the collection of diagonals $\mathcal{L}_j$ remaining after the first $m$ \allowhooks~for the \THF~associated to $\sigma$ have been constructed.

\begin{lem}
Let 
$\sigma=(\sigma_1,\sigma_2,\ldots,\sigma_k)\in S_k$ and let \\
$\{D^{(0)}_{\mu/\nu^{(0)}},D^{(1)}_{\mu/\nu^{(1)}}, \hdots, D^{(k)}_{\mu/\nu^{(k)}} \}$
be the sequence of partial \GBPR~diagrams produced during the construction of the corresponding \allowhookfilling.  Then
$\nu_j^{(r)}$ (for $j > r$) equals the number of entries in
$\{\sigma_1,\sigma_2,\ldots,\sigma_r\}$ that are greater than $\beta_{\sigma,r,j-r}.$
\label{T:countnuj}
\end{lem}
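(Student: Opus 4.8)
The plan is to show that, for $j>r$, the unique tunnel cell of $\mathcal{T}^{(r)}_{\mu/\nu^{(r)}}$ lying in row $j$ occupies the diagonal $\mathcal{L}_{\beta_{\sigma,r,j-r}}$, to read off $\nu_j^{(r)}$ from the column coordinate of that cell, and then to convert the resulting expression into the desired count by a short combinatorial argument.

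First I would pin down which diagonal each surviving tunnel cell lives on. By Lemma~\ref{L:diags}, $\mathbb{T}_{\mu/\nu^{(r)}}=\mathcal{T}^{(r)}_{\mu/\nu^{(r)}}$ contains exactly one tunnel cell in each of the rows $r+1,\dots,k$, and the diagonals removed during the first $r$ steps are exactly $\mathcal{L}_{\sigma_1},\dots,\mathcal{L}_{\sigma_r}$. Since $\{\sigma_1,\dots,\sigma_k\}=\{1,\dots,k\}$, the diagonals occupied after $r$ steps are precisely those indexed by $\beta_{\sigma,r}=\{\sigma_{r+1},\dots,\sigma_k\}$. A tunnel cell in row $j$ has the form $(j,\nu_j^{(r)}+1)$ by the tunnel cell condition $q=\nu_p^{(r)}+1$ in Definition~\ref{Def:Bordcells}, and by \eqref{E:LA} it therefore lies on the diagonal $\mathcal{L}_{j-\nu_j^{(r)}}$. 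Because $(\nu^{(r)}_{r+1},\dots,\nu^{(r)}_k)$ is a partition (Lemma~\ref{L:partition}), the function $j\mapsto j-\nu_j^{(r)}$ is strictly increasing on $\{r+1,\dots,k\}$, so as $j$ runs from $r+1$ to $k$ the occupied diagonal indices $j-\nu_j^{(r)}$ list the elements of $\beta_{\sigma,r}$ in increasing order. Matching the row $j$ (the $(j-r)$-th row from the bottom of the occupied rows) with the $(j-r)$-th smallest element of $\beta_{\sigma,r}$ gives $j-\nu_j^{(r)}=\beta_{\sigma,r,j-r}$, i.e.\ $\nu_j^{(r)}=j-\beta_{\sigma,r,j-r}$.

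It then remains to identify $j-\beta_{\sigma,r,j-r}$ with the stated count. Write $v=\beta_{\sigma,r,j-r}$. Exactly $v$ of the diagonal indices $\{1,\dots,k\}$ are at most $v$; among these, exactly $j-r$ lie in $\beta_{\sigma,r}$, since $v$ is by definition the $(j-r)$-th smallest element of $\beta_{\sigma,r}$ and the values are distinct. Hence exactly $v-(j-r)$ of the indices $\le v$ lie in the complementary set $\{\sigma_1,\dots,\sigma_r\}$, so the number of entries of $\{\sigma_1,\dots,\sigma_r\}$ that exceed $v$ is $r-(v-(j-r))=j-v=j-\beta_{\sigma,r,j-r}=\nu_j^{(r)}$, as claimed.

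The heart of the argument is the geometric step: its two ingredients are that the occupied diagonals after $r$ hooks are exactly $\beta_{\sigma,r}$ (Lemma~\ref{L:diags}) and that they appear in increasing order as the row index increases (the partition property of Lemma~\ref{L:partition}). I expect this monotonicity observation to be the main point to get right, since it is what licenses matching row $j$ to the $(j-r)$-th smallest surviving diagonal; once it is in place, both the formula $\nu_j^{(r)}=j-\beta_{\sigma,r,j-r}$ and the concluding count are routine. An essentially equivalent alternative would be a direct induction on $r$, tracking how placing the $r$-th hook shifts the relevant tunnel cell by $(1,1)$ exactly when $\sigma_r$ exceeds the diagonal of the cell in question, but the diagonal-counting route above seems the most economical.
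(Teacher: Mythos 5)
Your proof is correct and follows essentially the same route as the paper: both identify the row-$j$ tunnel cell after $r$ steps as lying on the $(j-r)$-th smallest surviving diagonal $\mathcal{L}_{\beta_{\sigma,r,j-r}}$ and then read $\nu_j^{(r)}$ off its column coordinate. The only cosmetic difference is that the paper obtains the count dynamically (each removal of a larger diagonal shifts the cell by $(1,1)$, so $q-1$ directly records the number of larger $\sigma_i$), whereas you derive $\nu_j^{(r)}=j-\beta_{\sigma,r,j-r}$ via the monotonicity of $j\mapsto j-\nu_j^{(r)}$ and finish with an equivalent static counting identity.
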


\begin{proof}
The \allowhookfilling~construction algorithm begins with the collection of cells in the leftmost column, one on each diagonal $\mathcal{L}_a$ by Lemma~\ref{lem:notin}.  At each stage of the algorithm, one diagonal is removed and each \allowcell~in a diagonal below the removed diagonal is moved one unit north and one unit east.  This means that after $r$ iterations of the \allowhookfilling~construction, the \allowhook~beginning in row $r+1$ and ending in row $j>r$ terminates in the $(j-r)^{th}$ smallest remaining diagonal $\mathcal{L}_b$, where $b=\beta_{\sigma,r,j-r}$.  The coordinates of this \allowcell~are $(p_{r+1},q_{r+1})$, where $q_{r+1}$ is one more than the number of \allowhooks~terminating in a diagonal greater than $b$.  But since $(p_{r+1},q_{r+1})$ is a \allowcell, we have $\nu_j^{(r)}=q_{r+1}-1$, so $\nu_j^{(r)}$ equals the number of \allowhooks~terminating in a diagonal greater than $\beta_{\sigma,r,j-r}$.  Since $\sigma_i$ equals the diagonal in which the $i^{th}$ \allowhook~terminates, $\nu_j^{(r)}$ equals the number of entries in $\{ \sigma_1, \sigma_2, \hdots , \sigma_r\}$ greater than $\beta_{\sigma,r,j-r}$.
\end{proof}

Recall that the \emph{Lehmer code}~\cite{Leh60} $L(\sigma)$ of a permutation $\sigma \in S_k$ is given by $$L(\sigma) = (L(\sigma_1),L(\sigma_2), \hdots , L(\sigma_k)),$$ where $L(\sigma_i) = \# \{ j>i | \sigma_j < \sigma_i \}$. 
 Note, if $d:=\sum_{i=1}^{n-1}L(\sigma_i)$ counts the total number of inversions of $\sigma,$ then the sign of $\sigma$ is given by $\sign(\sigma)=(-1)^{d}.$

The following lemma provides a method for counting the number of rows covered by a \allowhook.  This will be useful in constructing the sign of a \allowhookfilling.

\begin{lem}
Let $$\h(1,\tau_1),\h(2,\tau_2),\ldots,
\h(k,\tau_k),
$$
be a \THF~of $D_{\mu}$ with corresponding permutation 
$$\sigma=(\sigma_1,\sigma_2,\ldots, \sigma_k)\in S_k.$$
The number of rows covered by $\h(r,\c_r)$
is equal to $L(\sigma_{r})+1$.
\label{T:signs}
\end{lem}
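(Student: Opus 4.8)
The plan is to reduce the statement to a single elementary identity about permutation statistics, using the explicit description of terminal cells furnished by the bijection of Proposition~\ref{T:perm}.

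First I would observe that the number of rows covered by $\h(r,\c_r)$ has a clean description in terms of its terminal cell. Writing $\c_r=(p_r,q_r)$, the hook $\h(r,\c_r)$ consists of all boundary cells in rows $r$ through $p_r$, and each of these rows contains at least one boundary cell (because $(\nu^{(r-1)}_r,\ldots,\nu^{(r-1)}_k)$ is a partition by Lemma~\ref{L:partition}, exactly as in the proof of Lemma~\ref{lem:takesall}). Hence the number of rows covered equals $p_r-r+1$, and it suffices to prove $p_r-r=L(\sigma_r)$.

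Next I would invoke the explicit formula for the terminal cells established in the proof of Proposition~\ref{T:perm}: the $r$th terminal cell is $\c_r=(\sigma_r+m,\,1+m)$, where $m=\#\{i<r:\sigma_i>\sigma_r\}$. In particular $p_r=\sigma_r+m$, so the claim becomes the identity $m=r-\sigma_r+L(\sigma_r)$. I would then finish with a short double-counting argument. Since $\sigma\in S_k$ takes each value exactly once, the $r-1$ positions strictly before $r$ split according to whether the entry exceeds $\sigma_r$ or not:
$$m+\#\{i<r:\sigma_i<\sigma_r\}=r-1.$$
On the other hand, among the $\sigma_r-1$ values strictly smaller than $\sigma_r$, precisely $L(\sigma_r)$ occur after position $r$, so $\#\{i<r:\sigma_i<\sigma_r\}=\sigma_r-1-L(\sigma_r)$. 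Substituting yields $m=r-\sigma_r+L(\sigma_r)$, and therefore $p_r-r+1=\sigma_r+m-r+1=L(\sigma_r)+1$, as required.

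I do not anticipate a serious obstacle: the geometric content (that the number of covered rows is $p_r-r+1$) is immediate from the \allowhook~definition together with Lemma~\ref{lem:takesall}, and the explicit terminal-cell coordinates are already recorded in the proof of Proposition~\ref{T:perm}. The only genuine content is the elementary inversion-count identity, whose one subtlety is correctly partitioning the values smaller than $\sigma_r$ into those appearing before and after position $r$.
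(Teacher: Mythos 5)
Your proof is correct, and it takes a genuinely different (though closely parallel) route from the paper's. The paper argues via the diagonals: the hook $\h(r,\c_r)$ passes through exactly one tunnel cell of $\mathbb{T}_{\mu/\nu^{(r-1)}}$ per covered row, namely those tunnel cells lying in diagonals at most $\sigma_r$; since the remaining diagonals are $\{\sigma_r,\sigma_{r+1},\ldots,\sigma_k\}$, the count is $1+\#\{j>r:\sigma_j<\sigma_r\}=L(\sigma_r)+1$. You instead read off the terminal cell's row coordinate $p_r=\sigma_r+m$ (with $m=\#\{i<r:\sigma_i>\sigma_r\}$) from the explicit inverse map in the proof of Proposition~\ref{T:perm}, note that the number of covered rows is $p_r-r+1$ (justified correctly via Lemma~\ref{L:partition} and the argument of Lemma~\ref{lem:takesall}, since every row from $r$ to $p_r$ contains a boundary cell), and then convert the left-inversion count $m$ into the right-inversion count $L(\sigma_r)$ through the identity $m=r-\sigma_r+L(\sigma_r)$, which your double count establishes cleanly. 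Both arguments rest on the same underlying bookkeeping from Lemma~\ref{L:diags} and Proposition~\ref{T:perm}; yours replaces the paper's geometric observation about which diagonals the hook sweeps through with an explicit coordinate computation plus an elementary counting identity, making the only geometric input the statement that the rows covered are exactly rows $r$ through $p_r$. The paper's version is marginally shorter; yours is arguably more self-contained, since the coordinate formula for $\c_r$ is already written down explicitly earlier in the paper.
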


\begin{proof}
The \allowhook~$\h(r,(p_r,q_r))$ begins in row $r$ and travels through each \allowcell~situated in a diagonal smaller than $\sigma_r$, which means the number of rows covered by $\h(r,(p_r,q_r))$ equals one plus the number of remaining diagonals in $\mathbb{T}_{\mu/\nu^{(r-1)}}$ smaller than $\sigma_r$.  But this is equal to one plus the number of entries in $\{ \sigma_{r+1}, \sigma_{r+2} , \hdots , \sigma_k\}$ which are less than $\sigma_r$.  But this is precisely equal to $L(\sigma_{r})+1$, as desired.
\end{proof}

Notice that the terminal cells for the \allowhooks~are not dependent on $\mu$.  However, the \allowhooks~themselves still vary based on $\mu$ due to the cells in the lowest row of each \allowhook~as well as the new red cells that are potentially introduced, depending on $\mu$.

\begin{ex}
Let $\sigma=(4,7,3,1,6,2,5) \in S_{7}.$  Then $L(\sigma)=(3,5,2,0,2,0,0)$. Recall from Example~\ref{E:gettunnel} that the terminal cells in the \THC~corresponding to $\sigma$ are $$\{(4,1),(7,1),(5,3),(4,4),(7,2),(6,5),(7,3)\}.$$  Consider, for example, the \allowhook~$\h(3,(5,3))$.  This \allowhook~starts in row $3$ and ends in row $5$, so it covers $3$ rows, which equals $L(\sigma_3)+1$.
\end{ex}

Our next step in the proof of Theorem \ref{T:main} is to show that the lengths of the \allowhooks~equal the subscripts of the 
corresponding entries in $M_\mu.$

\begin{lem} 
Let $\mu$ be a sequence of length $k$ and let $M_{\mu}$ be defined as above.  Assume the first $r-1$ \allowhooks~have been constructed so that their terminal cells do not lie on $\mathcal{L}_j$.  Then
\begin{equation}
  (M_{\mu})_{r,j}(=H_{\mu_r-r+j})=H_{\Delta(\h(r,\c))},
\end{equation}
where $\c$ is the unique cell in the diagonal $\mathcal{L}_j$ that is also in $\mathbb{T}_{\murmin/\nu^{(r-1)}}$.
\label{T:Mrj}
\end{lem}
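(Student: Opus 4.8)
The plan is to reduce the statement to a single numerical identity and verify it directly. Since $(M_\mu)_{r,j}=H_{\mu_r+j-r}$ by definition of $M_\mu$, and tunnel hooks carry the subscript $\Delta(\h(r,\c))$, it suffices to prove the equality of subscripts
\[
\Delta(\h(r,\c)) = \mu_r + j - r.
\]
Everything then follows by unwinding the definitions of $\Delta$ and $\taxi$ together with Lemma~\ref{lem:gbprsum}.

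First I would pin down the terminal cell. By Lemma~\ref{L:diags}, the set $\mathbb{T}_{\mu/\nu^{(r-1)}}$ coincides with $\mathcal{T}^{(r-1)}_{\mu/\nu^{(r-1)}}$ and meets each diagonal in at most one cell; since the first $r-1$ terminal cells avoid $\mathcal{L}_j$ by hypothesis, the cell on $\mathcal{L}_j$ survives, so there is a unique $\c=(p,q)\in\mathbb{T}_{\mu/\nu^{(r-1)}}$ on $\mathcal{L}_j$ and it is a \allowcell. By the definition of the diagonals in Equation~\eqref{E:LA}, this cell satisfies $p-q+1=j$, i.e.\ $q=p-j+1$.

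Next I would compute the two summands of $\Delta$. Substituting $q=p-j+1$ into the taxicab formula \eqref{Def:taxi} collapses it cleanly:
\[
\taxi(\h(r,\c)) = (\nu^{(r-1)}_r + 1 - q) + (p-r) = \nu^{(r-1)}_r + 1 - (p-j+1) + (p-r) = \nu^{(r-1)}_r + j - r,
\]
so that Equation~\eqref{E:Delta} gives $\Delta(\h(r,\c)) = \spin_{\mu/\nu^{(r-1)}}(r) + \nu^{(r-1)}_r + j - r$. To finish, I would absorb the grey count: writing $a_r,b_r,c_r$ for the numbers of grey, blue, and red cells in row $r$ of $D^{(r-1)}_{\mu/\nu^{(r-1)}}$, the grey cells in row $r$ are exactly the $\nu^{(r-1)}_r$ shifted cells, so $a_r=\nu^{(r-1)}_r$, while $\spin_{\mu/\nu^{(r-1)}}(r)=b_r-c_r$ by Definition~\ref{D:bank}. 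Hence $\spin_{\mu/\nu^{(r-1)}}(r)+\nu^{(r-1)}_r = a_r+b_r-c_r = \mu_r$ by Lemma~\ref{lem:gbprsum}, and substituting yields $\Delta(\h(r,\c))=\mu_r+j-r$, as required.

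I do not expect a genuine obstacle here; the content is short and computational once the objects are correctly identified. The two points to get right are the bookkeeping conversions: that the diagonal relation $p-q+1=j$ is precisely what turns the taxicab distance into $\nu^{(r-1)}_r+j-r$, and that Lemma~\ref{lem:gbprsum} is exactly the identity absorbing the grey-count $\nu^{(r-1)}_r$ together with the bank $b_r-c_r$ into $\mu_r$. It is worth double-checking the computation against Example~\ref{Ex:Deltas}, where $\c=(6,2)$ lies on $\mathcal{L}_5$, giving $\mu_3+5-3=-4+2=-2$, matching the value $\Delta(\h(3,(6,2)))=-2$ reported there.
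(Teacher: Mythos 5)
Your proof is correct and follows essentially the same route as the paper's: identify the unique cell of $\mathbb{T}_{\mu/\nu^{(r-1)}}$ on $\mathcal{L}_j$, use the diagonal relation to collapse the taxicab distance to $\nu_r^{(r-1)}+j-r$, and then apply Lemma~\ref{lem:gbprsum} with $a_r=\nu_r^{(r-1)}$ to absorb $\spin(r)+\nu_r^{(r-1)}$ into $\mu_r$. The only cosmetic difference is that the paper parametrizes the cell as $(j+s,1+s)$ while you write $(p,q)$ with $q=p-j+1$; the computations are identical.
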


\begin{proof}
First note that there exists a unique $s$ such that $(j+s,1+s)=\c \in \mathbb{T}_{\mu/\nu^{(r-1)}}$ by the arguments given in the proof of Lemma~\ref{L:diags}.  
By the definition of $taxi$ (Equation \ref{Def:taxi}), we have $$taxi(\h(r,\c))=(\nu_r^{(r-1)}+1-(1+s))+(j+s-r)=\nu_r^{(r-1)}-r+j.$$

Next, recognize that $\mu_r - \nu_r^{(r-1)} = b_r-c_r= \spin(r)$, where the first equality is due to Lemma~\ref{lem:gbprsum} (since $\nu_r^{(r-1)}=a_r$) and the second is the definition of $\spin$ (see Definition~\ref{D:bank}).  Therefore, $$\spin(r)+taxi(\h(r,\c)) = \mu_r-\nu_r^{(r-1)} + \nu_r^{(r-1)} - r + j=\mu_r-r+j,$$ which is exactly what is necessary since $\Delta(\h(r,\c))=\spin(r)+taxi(\h(r,\c))$.
\end{proof}

\subsection{Proof of Theorem~\ref{T:main}}

We now have all the pieces we need to complete the proof of Theorem~\ref{T:main}.

\begin{proof}[Proof of Theorem~\ref{T:main}]
Recall~\cite{BBSSZ14} that
$$\I_\mu=\ndet(M_\mu)=\sum_{\sigma\in S_k}
\sign(\sigma) (M_\mu)_{i,\sigma_i}.
$$
 Proposition~\ref{T:perm} gives a bijection between permutations and tunnel hook coverings.
In Lemma~\ref{T:Mrj} we show that the subscripts on the entries of the matrix $M_{\mu}$ are equal to the $\Delta$ values for the corresponding \allowhooks.
All that remains is to show that
$\sign(\sigma)$ equals the product of the signs of the \allowhooks.
Recall that the sign of a permutation $\sigma=(\sigma_1,\sigma_2, \cdots, \sigma_k)$ is given by
$(-1)^d,$ where $d=\sum_i L(\sigma_i)$.  

Lemma~\ref{T:signs}  shows that the sign of each individual \allowhook~$h(r,\tau_i)$
is in fact $(-1)^{L(\sigma)_i},$ since the sign of an individual hook $\h(i,\c)$ is one less than the number of rows covered by $\h(i,\c)$.
Thus, the product of the signs of the \allowhooks~gives the sign of the permutation corresponding to that \allowhookfilling.
\end{proof}

In Examples \ref{Ex:CompSHF313},
\ref{Ex:CompSHF303}, and \ref{Ex:CompSHF3n13},
we give the complete collection
of \allowhookfillings~for shapes $(3,1,3),(3,0,3),$ and $(3,-1,3)$ to provide the decompositions of $\I_{(3,1,3)}$, $\I_{(3,0,3)},$ and $\I_{(3,-1,3)}$ into complete homogeneous noncommutative symmetric functions.

\begin{ex}
Below is a complete decomposition of $\I_{(3,1,3)}.$

\begin{picture}(385,100)
\scalebox{0.965}{
\put(-15,50){
\vbox{
\Yfillcolour{lightblue}
\young(\ \ \  )
\vskip -1.295pt
\young(\ )
\vskip -1.295pt
\young(\ \ \  )
}}

\put(12,53){\line(1,0){70}}
\put(12,66){\line(1,0){13}}
\put(12,79){\line(1,0){70}}

\put(95,49){$H_3$}
\put(35,63){$H_1$}
\put(95,76){$H_3$}

\put(105,50){
\vbox{
\Yfillcolour{lightblue}
\young(\ \ \  )
\vskip -1.295pt
\young(\ )
\vskip -1.295pt
\young(\ \ \  )
}}

\put(130,53){\line(1,0){70}}
\put(135,66){\line(0,1){15}}
\put(162,79){\line(1,0){40}}

\put(215,49){$H_3$}
\put(155,63){$\shortminus H_2$}
\put(215,76){$H_2$}

\put(225,50){
\vbox{
\Yfillcolour{lightblue}
\young(\ \ \  )
\vskip -1.295pt
\young(\ !<\Yfillcolour{lightpurple}>\  )
\vskip -1.295pt
\young(\ \ \  )
}}

\put(255,53){\line(1,0){65}}
\put(255,53){\line(0,1){13}}
\put(280,66){\line(1,0){13}}
\put(250,79){\line(1,0){70}}

\put(335,49){$\shortminus H_4$}
\put(305,63){$H_0$}
\put(335,76){$H_3$}

\put(-15,0){
\vbox{
\Yfillcolour{lightblue}
\young(\ \ \  )
\vskip -1.295pt
\young(\ !<\Yfillcolour{lightpurple}>\  )
\vskip -1.295pt
\young(\ \ \  )
}}

\put(12,3){\line(1,0){65}}
\put(12,3){\line(0,1){13}}
\put(42,16){\line(0,1){13}}
\put(7,29){\line(1,0){35}}
\put(67,29){\line(1,0){13}}

\put(92,-1){$\shortminus H_4$}
\put(62,13){$\shortminus H_2$}
\put(92,26){$H_1$}

\put(105,0){
\vbox{
\Yfillcolour{lightblue}
\young(\ \ \  )
\vskip -1.295pt
\young(\ !<\Yfillcolour{lightpurple}>\  )
\vskip -1.295pt
\young(\ \ \  )
}}

\put(135,3){\line(1,0){65}}
\put(135,3){\line(0,1){26}}
\put(160,16){\line(1,0){13}}
\put(160,29){\line(1,0){40}}

\put(215,-1){$H_5$}
\put(185,13){$H_0$}
\put(215,26){$H_2$}

\put(225,0){
\vbox{
\Yfillcolour{lightblue}
\young(\ \ \  )
\vskip -1.295pt
\young(\ !<\Yfillcolour{lightpurple}>\  )
\vskip -1.295pt
\young(\ \ \  )
}}

\put(255,3){\line(1,0){65}}
\put(255,3){\line(0,1){26}}
\put(285,16){\line(0,1){13}}
\put(310,29){\line(1,0){13}}

\put(335,-1){$H_5$}
\put(305,13){$\shortminus H_1$}
\put(335,26){$H_1$}}
\end{picture}

\vskip .125truein\noindent
Putting this together produces the decomposition $$\I_{(3,1,3)}=
H_{(3,1,3)}-H_{(3,2,2)}-H_{(4,3)}+H_{(4,2,1)}+H_{(5,2)}-H_{(5,1,1)}.$$

\label{Ex:CompSHF313}
\end{ex}

Example~\ref{Ex:CompSHF313} does not include red cells in any of the \allowhookfillings, since no parts are negative and it is not possible in this example for a \allowhook~to cover a purple cell at any point other than in its originating row.  Note that it is, however, possible to introduce a red cell in the decomposition of an immaculate function whose indexing composition has all positive parts; consider for example the \allowhook~decomposition of $\I_{1,2,1}$ (not shown), which includes the term $H_3H_2H_{-1}$. 

The next example illustrates a situation in which it is possible to introduce a red cell, despite the fact that the indexing composition does not contain any negative parts.  

\begin{ex}
A complete decomposition of $\I_{(3,0,3)}.$
\vskip -0.15truein
\begin{picture}(385,100)
\scalebox{0.965}{
\put(-15,50){
\vbox{
\Yfillcolour{lightblue}
\young(\ \ \  )
\vskip -1.295pt
\Yfillcolour{lightpurple}
\young(\ )
\vskip -1.295pt
\Yfillcolour{lightblue}
\young(\ \ \  )
}}

\put(12,53){\line(1,0){70}}
\put(12,66){\line(1,0){13}}
\put(12,79){\line(1,0){70}}

\put(95,49){$H_3$}
\put(35,63){$H_0$}
\put(95,76){$H_3$}

\put(105,50){
\vbox{
\Yfillcolour{lightblue}
\young(\ \ \  )
\vskip -1.295pt
\Yfillcolour{lightpurple}
\young(\ )
\vskip -1.295pt
\Yfillcolour{lightblue}
\young(\ \ \  )
}}

\put(130,53){\line(1,0){70}}
\put(135,66){\line(0,1){15}}
\put(162,79){\line(1,0){40}}

\put(215,49){$H_3$}
\put(155,63){$\shortminus H_1$}
\put(215,76){$H_2$}

\put(225,50){
\vbox{
\Yfillcolour{lightblue}
\young(\ \ \  )
\vskip -1.295pt
\Yfillcolour{lightpurple}
\young(\ !<\Yfillcolour{lightred}>\  )
\vskip -1.295pt
\Yfillcolour{lightblue}
\young(\ \ \  )
}}

\put(255,53){\line(1,0){65}}
\put(255,53){\line(0,1){13}}
\put(280,66){\line(1,0){13}}
\put(250,79){\line(1,0){70}}

\put(335,49){$\shortminus H_4$}
\put(315,63){$H_{\shortminus 1}$}
\put(335,76){$H_3$}

\put(-15,0){
\vbox{
\Yfillcolour{lightblue}
\young(\ \ \  )
\vskip -1.295pt
\Yfillcolour{lightpurple}
\young(\ !<\Yfillcolour{lightred}>\  )
\vskip -1.295pt
\Yfillcolour{lightblue}
\young(\ \ \  )
}}

\put(12,3){\line(1,0){65}}
\put(12,3){\line(0,1){13}}
\put(42,16){\line(0,1){13}}
\put(7,29){\line(1,0){35}}
\put(67,29){\line(1,0){13}}

\put(92,-1){$\shortminus H_4$}
\put(62,13){$\shortminus H_1$}
\put(92,26){$H_1$}

\put(107,0){
\vbox{
\Yfillcolour{lightblue}
\young(\ \ \  )
\vskip -1.295pt
\Yfillcolour{lightpurple}
\young(\ !<\Yfillcolour{lightred}>\  )
\vskip -1.295pt
\Yfillcolour{lightblue}
\young(\ \ \  )
}}

\put(137,3){\line(1,0){65}}
\put(137,3){\line(0,1){26}}
\put(162,16){\line(1,0){13}}
\put(162,29){\line(1,0){40}}

\put(217,-1){$H_5$}
\put(187,13){$H_{\shortminus 1}$}
\put(217,26){$H_2$}

\put(225,0){
\vbox{
\Yfillcolour{lightblue}
\young(\ \ \  )
\vskip -1.295pt
\Yfillcolour{lightpurple}
\young(\ !<\Yfillcolour{lightred}>\  )
\vskip -1.295pt
\Yfillcolour{lightblue}
\young(\ \ \  )
}}

\put(255,3){\line(1,0){65}}
\put(255,3){\line(0,1){26}}
\put(285,16){\line(0,1){13}}
\put(310,29){\line(1,0){13}}

\put(335,-1){$H_5$}
\put(305,13){$\shortminus H_0$}
\put(335,26){$H_1$}}
\end{picture}

\vskip .1truein \noindent
Including signs and recalling that $H_j=0$ if $j \in \mathbb{Z}^-$,
we see that
$$\I_{(3,0,3)}=
H_{(3,3)}-H_{(3,1,2)}+H_{(4,1,1)}-H_{(5,1)}.$$

\label{Ex:CompSHF303}
\end{ex}

Finally, Example~\ref{Ex:CompSHF3n13} depicts the complete decomposition of $\I_{(3,-1,3)}$ into \allowhooks.  In this situation, even though we begin with a red cell, two of the six \allowhookfillings~result in nonnegative indices.

\begin{ex}
A complete decomposition of $\I_{(3,-1,3)}:$

\begin{picture}(385,100)
\scalebox{0.965}{
\put(-15,50){
\vbox{
\Yfillcolour{lightblue}
\young(\ \ \  )
\vskip -1.295pt
\Yfillcolour{lightred}
\young(\ )
\vskip -1.295pt
\Yfillcolour{lightblue}
\young(\ \ \  )
}}

\put(12,53){\line(1,0){70}}
\put(12,66){\line(1,0){13}}
\put(12,79){\line(1,0){70}}

\put(95,49){$H_3$}
\put(35,63){$H_{\shortminus 1}$}
\put(95,76){$H_3$}

\put(105,50){
\vbox{
\Yfillcolour{lightblue}
\young(\ \ \  )
\vskip -1.295pt
\Yfillcolour{lightred}
\young(\ )
\vskip -1.295pt
\Yfillcolour{lightblue}
\young(\ \ \  )
}}

\put(130,53){\line(1,0){70}}
\put(135,66){\line(0,1){15}}
\put(162,79){\line(1,0){40}}

\put(215,49){$H_3$}
\put(155,63){$\shortminus H_0$}
\put(215,76){$H_2$}

\put(225,50){
\vbox{
\Yfillcolour{lightblue}
\young(\ \ \  )
\vskip -1.295pt
\Yfillcolour{lightred}
\young(\ \ \  )
\vskip -1.295pt
\Yfillcolour{lightblue}
\young(\ \ \  )
}}

\put(255,53){\line(1,0){65}}
\put(255,53){\line(0,1){13}}
\put(280,66){\line(1,0){40}}
\put(250,79){\line(1,0){70}}

\put(335,49){$\shortminus H_4$}
\put(335,63){$H_{\shortminus 2}$}
\put(335,76){$H_3$}

\put(-15,0){
\vbox{
\Yfillcolour{lightblue}
\young(\ \ \  )
\vskip -1.295pt
\Yfillcolour{lightred}
\young(\ \ \  )
\vskip -1.295pt
\Yfillcolour{lightblue}
\young(\ \ \  )
}}

\put(15,3){\line(1,0){65}}
\put(15,3){\line(0,1){13}}
\put(45,16){\line(0,1){13}}
\put(13,29){\line(1,0){32}}
\put(45,16){\line(1,0){35}}
\put(70,29){\line(1,0){10}}

\put(92,-1){$\shortminus H_4$}
\put(92,13){$\shortminus H_0$}
\put(92,26){$H_1$}

\put(107,0){
\vbox{
\Yfillcolour{lightblue}
\young(\ \ \  )
\vskip -1.295pt
\Yfillcolour{lightred}
\young(\ \ \ )
\vskip -1.295pt
\Yfillcolour{lightblue}
\young(\ \ \  )
}}

\put(137,3){\line(1,0){65}}
\put(137,3){\line(0,1){26}}
\put(162,16){\line(1,0){40}}
\put(162,29){\line(1,0){40}}

\put(215,-1){$H_5$}
\put(215,13){$ H_{\shortminus 2}$}
\put(215,26){$H_2$}

\put(225,0){
\vbox{
\Yfillcolour{lightblue}
\young(\ \ \  )
\vskip -1.295pt
\Yfillcolour{lightred}
\young(\ \ \  )
\vskip -1.295pt
\Yfillcolour{lightblue}
\young(\ \ \  )
}}

\put(255,3){\line(1,0){65}}
\put(255,3){\line(0,1){26}}
\put(285,16){\line(0,1){13}}
\put(285,16){\line(1,0){35}}
\put(310,29){\line(1,0){10}}

\put(335,-1){$H_5$}
\put(335,13){$\shortminus H_{\shortminus 1}$}
\put(335,26){$H_1$}}
\end{picture}

\vskip .125truein\noindent
Including signs and recalling that $H_j=0$ if $j \in \mathbb{Z}^-$,
we see that
$$\I_{(3,-1,3)}=
-H_{(3,2)} +H_{(4,1)}.$$

\label{Ex:CompSHF3n13}

\end{ex}

\subsection{Quasisymmetric Functions}

The Hopf algebra $\QSym$ of quasisymmetric functions is dual to the algebra $\NSym$, satisfying the pairing $$\langle \cdot, \cdot \rangle : \NSym \times \QSym \rightarrow \mathbb{Q},$$ defined by setting $$\langle H_{\alpha}, M_{\beta} \rangle = \delta_{\alpha,\beta}.$$  Let $\mathcal{C}$ be the set of all compositions.  Any pair of bases $\{X_{\alpha}\}_{\alpha \in \mathcal{C}}$ in $\NSym$ and $\{Y_{\beta}\}_{\beta \in \mathcal{C}}$ in $\QSym$ satisfying $\langle X_{\alpha} , Y_{\beta} \rangle = \delta_{\alpha, \beta}$ are said to be \emph{dual} to one another.  Hence, the complete homogeneous basis for $\NSym$ is dual to the monomial basis for $\QSym$.

The \emph{ribbon Schur basis} $\{R_{\alpha}\}_{\alpha \in \mathcal{C}}$ for $\NSym$ can be defined in terms of the complete homogeneous basis by $$R_{\alpha} = \sum_{\alpha \preceq \beta}(-1)^{\ell(\alpha)-\ell(\beta)} H_{\beta},$$ where $\preceq$ is the refinement ordering on compositions.  The ribbon Schur basis for $\NSym$ is dual to the fundamental basis for $\QSym$~\cite{GKLLRT95,Ges84}. 

The basis in $\QSym$ dual to the immaculate basis is called the \emph{dual immaculate quasisymmetric function basis}~\cite{BBSSZ14}.  Elements of this basis are denoted by $\dI_{\alpha}$ and expand positively in the monomial, fundamental, and Young quasisymmetric Schur bases~\cite{LMvW13,AHM18}.  The following combinatorial formula for the expansion of the monomial basis for $\QSym$ into the dual immaculate basis is an immediate corollary of Theorem~\ref{T:main}, due to duality.  Intuitively, to expand $M_{\alpha}$ into dual immaculates, the shapes being covered may vary but the values associated to the tunnel hook coverings must be equal to the composition $\alpha$.

\begin{cor}
The decomposition of the monomial quasisymmetric functions into the dual immaculate quasisymmetric functions is given by the following formula.

\begin{equation}
M_{\alpha}=\sum_{\mu \models | \alpha|} \sum_{\substack{\gamma \in \SHF_{\mu}, \\ \Delta(\gamma)=\alpha}} \prod_{r=1
}^k \ \sign(\h(r,\c_r))\ \dI_{\Delta(\h(r,\c_r))},
\end{equation}
where $\mu$ is a composition of $| \alpha|$, $\SHF_{\mu}$ denotes the collection of \allowhookfillings~of a diagram of shape $\mu,$ $\h(r,\c_r)\in \gamma$, and the sign $\sign(\h(r,\c_r))$ and integer value $\Delta(\h(r,\c_r))$ associated to each \allowhook~$\h(r,\c_r)$ are as described above.
\end{cor}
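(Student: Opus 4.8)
The plan is to derive this formula from Theorem~\ref{T:main} purely by the duality between $\NSym$ and $\QSym$, so that essentially no new combinatorics is required. I would work in a single fixed degree $n = |\alpha|$. Recall that $\{\I_\mu : \mu \models n\}$ is a basis of the degree-$n$ component of $\NSym$ whose dual basis under $\langle\,\cdot\,,\,\cdot\,\rangle$ is $\{\dI_\mu : \mu \models n\}$, while $\{H_\beta : \beta \models n\}$ and $\{M_\beta : \beta \models n\}$ are dual bases of the degree-$n$ components of $\NSym$ and $\QSym$. Since $M_\alpha$ is homogeneous of degree $n$, its dual-immaculate expansion is a \emph{finite} sum $M_\alpha = \sum_{\mu \models n} c_\mu\, \dI_\mu$, and pairing both sides with $\I_\mu$ extracts the coefficient as $c_\mu = \langle \I_\mu, M_\alpha\rangle$. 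Thus it suffices to evaluate each such pairing.

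Next I would substitute the expansion of $\I_\mu$ supplied by Theorem~\ref{T:main},
\[
\I_\mu = \sum_{\gamma \in \SHF_\mu} \Big(\prod_{r=1}^k \sign(\h(r,\c_r))\Big)\, H_{\Delta(\gamma)}, \qquad \Delta(\gamma) = \big(\Delta(\h(1,\c_1)),\ldots,\Delta(\h(k,\c_k))\big),
\]
and then use bilinearity of the pairing together with $\langle H_\beta, M_\alpha\rangle = \delta_{\alpha,\beta}$. This immediately reduces $c_\mu$ to a signed count of exactly those \allowhookfillings~$\gamma$ whose associated index sequence pairs nontrivially with $M_\alpha$.

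The one step that needs genuine care --- and the main, if mild, obstacle --- is reconciling the fact that $\Delta(\gamma)$ is an integer \emph{sequence} with the fact that the pairing $\langle H_\beta, M_\alpha\rangle$ is indexed by compositions. Here I would invoke the standing conventions $H_0 = 1$ and $H_j = 0$ for $j < 0$: a covering whose $\Delta(\gamma)$ has a negative entry contributes $H_{\Delta(\gamma)} = 0$ and drops out, while zero entries contribute trivial factors $H_0 = 1$ and are deleted, so $H_{\Delta(\gamma)}$ equals $H$ of the composition obtained from $\Delta(\gamma)$ by erasing its zeros. Hence $\langle H_{\Delta(\gamma)}, M_\alpha\rangle$ equals $1$ precisely when that composition is $\alpha$, i.e. exactly when $\Delta(\gamma) = \alpha$, and $0$ otherwise. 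Combining these observations gives $c_\mu = \sum_{\gamma \in \SHF_\mu,\ \Delta(\gamma)=\alpha} \prod_{r=1}^k \sign(\h(r,\c_r))$, and summing over all $\mu \models n$ yields exactly the stated expansion of $M_\alpha$ into the dual immaculate basis.
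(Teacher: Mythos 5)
Your proposal is correct and is essentially the paper's own argument: the paper presents this corollary as an immediate consequence of Theorem~\ref{T:main} by duality, which is exactly the computation you carry out (extracting $c_\mu = \langle \I_\mu, M_\alpha\rangle$, substituting the tunnel hook expansion of $\I_\mu$, and applying $\langle H_\beta, M_{\alpha}\rangle = \delta_{\beta,\alpha}$). The one point needing care---converting the integer sequence $\Delta(\gamma)$ to a composition via $H_0=1$ and $H_j=0$ for $j<0$, under which the condition ``$\Delta(\gamma)=\alpha$'' is to be read as ``$\Delta(\gamma)$ flattens to $\alpha$''---is one you handle correctly, and in more detail than the paper does.
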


\begin{ex}
Let $\alpha=(2,1,2)$. 
 The following are all \THF~$\gamma$ such that $\Delta(\gamma)=(2,1,2)$.


\hskip -.25truein
\begin{picture}(380,140)
\put(65,70){
\vbox{
\Yfillcolour{lightblue}
\young(\  )
\vskip -1.295pt
\Yfillcolour{lightblue}
\young(\ )
\vskip -1.295pt
\Yfillcolour{lightblue}
\young(\  )
\vskip -1.295pt
\Yfillcolour{lightblue}
\young(\  )
\vskip -1.295pt
\Yfillcolour{lightblue}
\young(\  )
}}

\put(95,73){\line(0,1){13}}
\put(90,99){\line(1,0){13}}
\put(95,112){\line(0,1){13}}

\put(140,70){
\vbox{
\Yfillcolour{lightblue}
\young(\ \  )
\vskip -1.295pt
\Yfillcolour{lightblue}
\young(\ )
\vskip -1.295pt
\Yfillcolour{lightblue}
\young(\  )
\vskip -1.295pt
\Yfillcolour{lightblue}
\young(\  )
}}

\put(170,73){\line(0,1){13}}
\put(165,99){\line(1,0){13}}
\put(165,112){\line(1,0){40}}

\put(235,70){
\vbox{
\Yfillcolour{lightblue}
\young(\  )
\vskip -1.295pt
\Yfillcolour{lightblue}
\young(\ )
\vskip -1.295pt
\Yfillcolour{lightblue}
\young(\ \  )
\vskip -1.295pt
\Yfillcolour{lightblue}
\young(\  )
}}

\put(265,73){\line(0,1){13}}
\put(290,86){\line(1,0){13}}
\put(265,100){\line(0,1){13}}

\put(50,10){
\vbox{
\Yfillcolour{lightblue}
\young(\ \ )
\vskip -1.295pt
\Yfillcolour{lightblue}
\young(\ \  )
\vskip -1.295pt
\Yfillcolour{lightblue}
\young(\  )
}}
\put(80,13){\line(0,1){13}}
\put(105,26){\line(1,0){13}}
\put(80,40){\line(1,0){40}}

\put(140,10){
\vbox{
\Yfillcolour{lightblue}
\young(\  )
\vskip -1.295pt
\Yfillcolour{lightblue}
\young(\  )
\vskip -1.295pt
\Yfillcolour{lightblue}
\young(\   )
\vskip -1.295pt
\Yfillcolour{lightblue}
\young(\ \  )
}}

\put(165,13){\line(1,0){40}}
\put(165,26){\line(1,0){13}}
\put(170,40){\line(0,1){13}}

\put(235,10){
\vbox{
\Yfillcolour{lightblue}
\young(\ \ )
\vskip -1.295pt
\Yfillcolour{lightblue}
\young(\  )
\vskip -1.295pt
\Yfillcolour{lightblue}
\young(\ \  )
}}

\put(265,13){\line(1,0){40}}
\put(265,26){\line(1,0){13}}
\put(265,40){\line(1,0){40}}
\end{picture}
Therefore, $M_{212} = \dI_{11111}-\dI_{1112}+\dI_{1211}-\dI_{122}-\dI_{2111}+\dI_{212}$.
\end{ex}

\section{Immaculate functions indexed by skew shapes}
\label{S:modifynu}

We now extend Theorem~\ref{T:NsymJacobiTrudi} to introduce a definition of skew immaculate functions, just as the Jacobi-Trudi formula can be used to define skew Schur functions in terms of the complete homogeneous symmetric functions. 

\begin{defn}{\label{Def:Imunu}} Let $\mu, \nu \in \mathbb{Z}^k$ be sequences of integers.  Recalling Theorem \ref{T:SymJacobiTrudi}, we define $(M_{\mu/\nu})_{i,j}=H_{(\mu_i-i)-(\nu_j-j)}$ and\begin{equation}\I_{\mu/\nu}=\ndet(M_{\mu/\nu}).\end{equation}
where the determinant $\ndet$ is expanded using Laplace expansion
starting in the top row and continuing sequentially to the bottom row.
\end{defn}

For example, if $\mu=(2,5,3)$ and $\nu=(1,3,0)$, then $$M_{\mu/\nu} = \begin{bmatrix} H_1 & H_0 & H_4  \\ H_{3} & H_{2} & H_6  \\ H_{0} & H_{-1} & H_3  \end{bmatrix}.$$  Recalling that $H_a=0$ if $a \in \mathbb{Z}^{-}$ and $H_0=1$, the resulting decomposition of $\I_{\mu/\nu}$ into the complete homogeneous basis for $\NSym$ is therefore $$\I_{\mu/\nu}=H_{(1,2,3)}-H_{(3,3)}+H_{(6)} - H_{(4,2)}.$$

 Notice that the skew Schur function $s_{(2,5,3)/(1,3)}$ becomes $s_{(4,3,3)/(2,2)}$ under the well-known \emph{straightening algorithm} which states that for any integer sequences $\lambda', \lambda''$ and any integers $a$ and $b$, $$s_{(\lambda',a,b,\lambda'')}=-s_{(\lambda',b-1,a+1,\lambda'')}.$$  This property arises by swapping rows in the Jacobi-Trudi determinant for Schur functions.  Note that this relationship does not generally apply to immaculate functions due to noncommutativity.  For example, $\I_{(a,b)}=H_{(a,b)}-H_{(a+1,b-1)}$ while $\I_{(b-1,a+1)}=H_{(b-1,a+1)}-H_{(b,a)}$ so that $\I_{(a,b)} \not= - \I_{(b-1,a+1)}$.
 
 Decomposing the Schur function from our example into the complete homogeneous symmetric functions produces the expansion $$s_{(4,3,3)/(2,2)} = h_{(3,2,1)}-h_{(3,3)}+h_{(6)}-h_{(4,2)},$$ which is exactly the decomposition obtained by applying the forgetful map to the expansion produced by our construction in $\NSym$.  This is true in general since the starting matrices are identical, the only difference between the $H-$expansion of $\I_{\mu/\nu}$ and the $h-$expansion of $s_{\mu/\nu}$ being that the indices for the $H-$basis expansions do not commute whereas the indices for the $h-$basis expansions do commute.  
 
It is important to note that the Hopf algebra operation $\rightharpoonup$ for skewing elements of $\NSym$ by elements of $\QSym$ (often referred to as the transpose of multiplication~\cite{Mon93}), another natural candidate for skew immaculates, does not always coincide with this construction.  For example, skewing $\I_{(2,5,3)}$ by $\dI_{(1,3)}$ produces the $H-$decomposition $$\dI_{(1,3)} \rightharpoonup \I_{(2,5,3)} =\I_{(1,2,3)}+\I_{(1,3,2)}+\I_{(1,4,1)}+\I_{(1,5)}+\I_{(2,4)} \not= \I_{(2,5,3)/(1,3)}.$$  One open problem is to classify the pairs of compositions for which the two skew candidates coincide.

 
 Theorem~\ref{T:mainskew} states that when we restrict $\nu$ to be a partition (regardless of whether it is contained inside $\mu$), we can still apply \allowhookfillings~to generate the determinantal decomposition combinatorially.  That is, 

$$\I_{\mu/\nu}=\sum_{\gamma\in \SHF_{\mu/\nu}} \prod_{\h(r,\c_r)\in \gamma} \ \sign(\h(r,\c_r))\ H_{\Delta(\h(r,\c_r))},$$
where $\SHF_{\mu/\nu}$ denotes the set of all \allowhook~coverings of the \GBPR~diagram $D_{\mu/\nu}.$

Section~\ref{sec:minors} describes how to utilize submatrices to produce the homogeneous function expansion of an immaculate function indexed by a skew shape.  In Section~\ref{sec:skewproof}, we apply these submatrices to prove Theorem~\ref{T:mainskew}.  In Section~\ref{S:nuweakdec}, we discuss how to extend this approach to the situation in which $\nu$ is an arbitrary sequence.  Section~\ref{sec:schur} explains how to recover the decomposition of a Schur function into the complete homogeneous symmetric functions in $\Sym$.

\subsection{Submatrices and immaculates
indexed by skew shapes}{\label{sec:minors}}

Recall that the Jacobi-Trudi formula stated in Theorem \ref{T:NsymJacobiTrudi}~\cite{BBSSZ14}  requires the determinant of $M_\mu$ be computed expanding
using Laplace expansion row by row starting with the first row; the order in which the expansion occurs is important since the $H_a$
are not commutative.
Recalling this convention, we have
$\I_\mu=\ndet(M_{\mu})$ where $(M_{\mu})_{i,j} = H_{\mu_i+j-i}$
and $1 \le i,j \le n.$
Using the permutation expansion of the determinant, we have
\begin{align}
\I_\mu=&\sum_{\sigma\in S_k} \sign(\sigma) (M_{\mu})_{1,\sigma_1} (M_{\mu})_{2,\sigma_2}\cdots (M_{\mu})_{k,\sigma_k}
\cr
=&\sum_{\sigma\in S_k} \sign(\sigma)
H_{\mu_1+\sigma_1-1} H_{\mu_2+\sigma_2-2}\cdots H_{\mu_k+\sigma_k-k},
\end{align}
where $\sign(\sigma)$ is the sign of $\sigma.$
Due to properties of the Jacobi-Trudi matrix,
it is possible to describe the submatrices of $M_\mu$
in terms of permutations.  
Recall that with $\sigma\in S_k$ and $m<k$, the set $\beta_{\sigma,m}$ is the collection $\{\sigma_{m+1},\sigma_{m+2}, \cdots, \sigma_{k}\}$
and $\beta_{\sigma,m,j}$ is the $j^{th}$ smallest element of $\beta_{\sigma,m}$.

Let $M(i|j)$ be the submatrix obtained from the matrix $M$
by deleting  row $i$ and column $j$.
With this notation, the submatrix 
obtained by deleting row $i$ and column $\sigma_i$ for $1 \le i \le m$ is
$$M_\mu^{(m,\sigma)}:=(M_{\mu})(1|\sigma_1)(2|\sigma_2)\ldots(m|\sigma_{m}).$$  Notice that $\beta_{\sigma,m}$ is the set of columns that were not deleted in the construction of $M_\mu^{(m,\sigma)}$.  

In the following proposition, we use notation $\tilde{\mu}$ and $\tilde{\nu}$ to denote the last $k-m$ parts of $\mu$ and $\nu^{(m)}$, respectively.  Although this process of removing the first $m$ parts relies on $m$, we suppress the $m$ in our notation for clarity of exposition. 

\begin{prop}{\label{P:minordet}}
Let $\mu=(\mu_1, \hdots , \mu_k) \in \mathbb{Z}^k$ and let $\sigma=(\sigma_1, \sigma_2, \hdots , \sigma_k)$ be a permutation in $S_{k}$.  Let $\tilde{\mu}=(\mu_{m+1}, \mu_{m+2}, \hdots , \mu_k)$ be the sequence obtained by deleting the first $m$ parts of $\mu$ and let $\tilde{\nu} = (\nu_{m+1}^{(m)}, \nu_{m+2}^{(m)}, \hdots , \nu_k^{(m)})$ be the partition consisting of the last $k-m$ parts of the sequence obtained during the construction of the first $m$ \allowhooks~corresponding to the first $m$ entries in the permutation $\sigma$.  Then $$(M_{\mu}^{(m, \sigma)})_{i,j}=H_{(\tilde{\mu}_{i}-i)-(\tilde{\nu}_{j}-j)}.$$
\end{prop}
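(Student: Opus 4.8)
The plan is to unwind the re-indexing hidden in the submatrix $M_\mu^{(m,\sigma)}$ and then match subscripts using Lemma~\ref{T:countnuj} together with one elementary count on the permutation $\sigma$. First I would pin down which entry of the original matrix $M_\mu$ occupies position $(i,j)$ of the submatrix. Deleting rows $1,\ldots,m$ leaves the rows indexed in order by $m+1,\ldots,k$, so the $i$-th surviving row is the original row $m+i$. Deleting columns $\sigma_1,\ldots,\sigma_m$ leaves exactly the columns whose indices lie in $\beta_{\sigma,m}=\{\sigma_{m+1},\ldots,\sigma_k\}$; listing these $k-m$ distinct indices in increasing order, the $j$-th surviving column is the original column $\beta_{\sigma,m,j}$. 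Hence
$$(M_\mu^{(m,\sigma)})_{i,j}=(M_\mu)_{m+i,\,\beta_{\sigma,m,j}}=H_{\mu_{m+i}+\beta_{\sigma,m,j}-(m+i)}.$$
Since $\tilde\mu_i=\mu_{m+i}$, a direct comparison with the target subscript $(\tilde\mu_i-i)-(\tilde\nu_j-j)$ shows it suffices to verify $\beta_{\sigma,m,j}-m=j-\tilde\nu_j$, equivalently $\tilde\nu_j=m+j-\beta_{\sigma,m,j}$.

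Next I would invoke Lemma~\ref{T:countnuj} with $r=m$ and row index $m+j>m$, which gives that $\tilde\nu_j=\nu^{(m)}_{m+j}$ equals the number of entries of $\{\sigma_1,\ldots,\sigma_m\}$ exceeding $\beta_{\sigma,m,j}$. The proposition therefore reduces to the purely combinatorial claim that, writing $b=\beta_{\sigma,m,j}$, the set $\{\sigma_1,\ldots,\sigma_m\}$ contains exactly $m+j-b$ values larger than $b$. This I would prove by double-counting the values in $\{1,\ldots,k\}$ exceeding $b$: there are $k-b$ such values in total, and since $\sigma$ is a permutation they split between the deleted indices $\{\sigma_1,\ldots,\sigma_m\}$ and the surviving indices $\beta_{\sigma,m}$. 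Because $b$ is by definition the $j$-th smallest of the $k-m$ elements of $\beta_{\sigma,m}$, exactly $(k-m)-j$ elements of $\beta_{\sigma,m}$ exceed $b$; subtracting yields $(k-b)-((k-m)-j)=m+j-b$ values exceeding $b$ among $\{\sigma_1,\ldots,\sigma_m\}$. Substituting $\tilde\nu_j=m+j-b$ into $(\tilde\mu_i-i)-(\tilde\nu_j-j)$ collapses it to $\mu_{m+i}+\beta_{\sigma,m,j}-(m+i)$, matching the entry computed above.

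I expect the main obstacle to be bookkeeping rather than mathematical depth: the delicate point is correctly tracking which original row and column correspond to the fresh indices $i$ and $j$ after the deletions, and in particular confirming that the surviving columns, read in increasing order, are exactly $\beta_{\sigma,m,1}<\cdots<\beta_{\sigma,m,k-m}$. Once that identification is secured, the only genuine content is the single counting identity $\tilde\nu_j=m+j-\beta_{\sigma,m,j}$, whose verification is the short double-count above, with Lemma~\ref{T:countnuj} supplying the bridge back to the tunnel hook construction.
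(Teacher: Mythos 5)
Your proposal is correct and follows essentially the same route as the paper's proof: both identify $(M_\mu^{(m,\sigma)})_{i,j}=H_{\mu_{m+i}-(m+i)+\beta_{\sigma,m,j}}$, reduce the claim to the identity $\nu^{(m)}_{m+j}=m+j-\beta_{\sigma,m,j}$, and establish it by the same double-count of entries exceeding $\beta_{\sigma,m,j}$ combined with Lemma~\ref{T:countnuj}. No substantive differences.
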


\begin{proof}
We use the Laplace expansion
for computing the Jacobi-Trudi determinant.  
After expanding through the first $m$ rows following the ordering given by the permutation $\sigma\in S_k$, the entries of the resulting submatrix
 $M_{\mu}^{(m, \sigma)}$ are given by the formula
\begin{equation}
    (M_{\mu}^{(m, \sigma)})_{i,j}= H_{\mu_{m+i}-(m+i)+\beta_{\sigma,m,j}},
    \label{E:RhoExp}
\end{equation}
with $1 \le i,j \le k-m$.
For $1\le j \le k-m$,  we set 
$$\zeta_{j}^{(m,\sigma)}=m+
j-\beta_{\sigma,m,j}.$$
There are $k-(\beta_{\sigma,m,j})$ entries larger than $\beta_{\sigma,m,j}$
in $\sigma\in S_k$ with exactly $k-m -j$ of them in $\{ \sigma_{m+1},\ldots,\sigma_{k} \}.$  Therefore, the number of entries larger than $\beta_{\sigma,m,j}$ in $\{ \sigma_1, \hdots , \sigma_m\}$ equals $$k-(\beta_{\sigma,m,j}) - (k-m-j) = m+j-(\beta_{\sigma,m,j}) = \zeta_{j}^{(m,\sigma)}.$$

Substituting $m$ for $r$ and $j+m$ for $j$ in Lemma~\ref{T:countnuj} implies $\nu_{j+m}^{(m)}= \zeta_{j}^{(m,\sigma)}$.  Then $\beta_{\sigma,m,j} = m+j-\nu_{m+j}^{(m)}$.  Equation~\eqref{E:RhoExp} now produces $$(M_{\mu}^{(m, \sigma)})_{i,j}= H_{\mu_{m+i}-(m+i)+(m+j)-\nu_{m+j}^{(m)}}=H_{(\mu_{m+i}-i)-(\nu_{m+j}^{(m)}-j)},$$ so that $$(M_{\mu}^{(m, \sigma)})_{i,j}=H_{(\tilde{\mu}_{i}-i)-(\tilde{\nu}_{j}-j)},$$ as desired.
\end{proof}

Proposition~\ref{P:minordet} together with Equation~\eqref{Def:Imunu} imply that the skew immaculate $\I_{\tilde{\mu}/\tilde{\nu}}$ is equal to the determinant of the submatrix $M_{\mu}^{(m,\sigma)}$.  This provides a way to start with a sequence $\mu$ and produce a skew immaculate $\I_{\tilde{\mu}/\tilde{\nu}}$.  In the next section, we discuss how to start with the skew shape, modify it to create a starting un-skewed sequence, and then apply the \allowhookfilling~techniques to construct its $H$-decomposition combinatorially.

\subsection{Proof of Theorem~\ref{T:mainskew}}{\label{sec:skewproof}}

In the following, $\lambda^T$ denotes the transpose of the partition $\lambda$, obtained by reflecting $\lambda$ across the main diagonal (southwest to northeast).

\begin{proof}
[Proof of Theorem~\ref{T:mainskew}]
Let $\mu \in \mathbb{Z}^k$ and let $\lambda$ be a partition with $\ell$ nonzero parts, where $\ell \le k$.  If $\ell <k$, set $\lambda_{i}=0$ for $\ell < i \le k$.  Prepend $\lambda_1$ rows of length $\lambda_1$ to the front of $\mu$ to obtain $\underline{\mu} = (( \lambda_1)^{\lambda_1}, \mu_1, \hdots, \mu_k)$.  Applying Theorem~\ref{T:main} to the composition $\underline{\mu}$ produces a collection of \allowhookfillings~whose signed weights produce the decomposition of $\I_{\underline{\mu}}$ into complete homogeneous functions.  Consider the \allowhookfillings~of $\underline{\mu}$ whose first $\lambda_1$ \allowhooks~are the collection $$\{\h(1,\c_1), \h(2, \c_2), \hdots , \h(\lambda_1,\c_{\lambda_1})\},$$ where $\c_j=(\lambda_1+(\lambda^T)_j,j)$. 
 
 To see that $(\lambda_1+(\lambda^T)_j,j) \in \mathbb{T}_{\underline{\mu}/\nu^{(j-1)}}$ for each $1 \le j \le \lambda_1,$ first note that $\c_1=(\lambda_1+(\lambda^T)_1,1)$ is in $\mathbb{T}_{\mu/\nu^{(0)}}$, since every cell in the leftmost column of $D_{\mu/\nu^{(0)}}$ is in $\mathbb{T}_{\mu/\nu^{(0)}}$.  The \allowhook~$\h(1,\c_1)$ is shaped like the letter L; it is comprised precisely of the cells $$\{ (1,1),(1,2), \hdots , (1,\lambda_1), (2,1), (3,1), \hdots , (\lambda_1+(\lambda^T)_1,1) \}.$$  In particular, the cell $(\lambda_1+(\lambda^T)_2,1)$ is contained in $\h(1,\c_1)$ since $(\lambda^T)_2 \le (\lambda^T)_1$.  But then $(\lambda_1+(\lambda^T)_2-1,1) \in \h(1, \c_1)$ and is not the terminal cell for $\h(1, \c_1)$ since $(\lambda_1+(\lambda^T)_2-1,1)$ lies immediately below $(\lambda_1+(\lambda^T)_2,1)$ and $\h(1,\c_1)$ is L-shaped.  Step~\ref{add11} of Procedure~\ref{Pr:perm} implies that $(\lambda_1+(\lambda^T)_2,2) \in \mathbb{T}_{\mu/\lambda^{(1)}}$ since $(\lambda_1+(\lambda)^T_2,2)$ is obtained from $(\lambda_1+(\lambda^T)_2-1,1)$ by adding $(1,1)$.  
 
 The \allowhook~$\h(2,\c_2)$ is then also shaped like the letter L, consisting of the cells $$\{(2,2),(2,3), \hdots , (2,\lambda_1), (3,2), (4,2), \hdots , (\lambda_1+(\lambda^T)_2,2) \}.$$  We can yet again use the fact that $\lambda^T$ is a partition to conclude that $(\lambda_1+(\lambda^T)_3,2) \in \h(2, \c_2)$.  This implies that $(\lambda_1+(\lambda^T)_3,3) \in \mathbb{T}_{\mu/\nu^{(2)}}$.  Repeating this argument shows that the cells $\{\c_1, \c_2, \hdots , \c_{\lambda_1} \}$ are indeed terminal cells, and so the collection $\{\h(1,\c_1), \h(2, \c_2), \hdots , \h(\lambda_1,\c_{\lambda_1})\}$ is indeed a valid \allowhookfilling.

Next, consider the partial \GBPR~diagram obtained after $\lambda_1$ iterations of Procedure~\ref{A:THC} (Step 2).  The first $\lambda_1$ rows of $\underline{\mu}$ have been removed and the remaining nonzero rows of $\nu^{(\lambda_1)}$ are equal to the rows of $\lambda$, since their columns are the rows of $\lambda^T$.  Therefore, removing the first $\lambda_1$ rows entirely from this diagram produces precisely the \GBPR~diagram $D^{(0)}_{\mu/\lambda}$.

Finally, we prove that the determinant $\ndet\left(M_{\mu/\lambda}\right)$ is equal to the determinant of the submatrix of $M_{\underline{\mu}}$ obtained by removing rows $1$ through $\lambda_1$ and columns $\{\lambda_1+(\lambda^T)_1,\lambda_1+(\lambda^T)_2-1, \hdots , \lambda_1+(\lambda^T)_{\lambda_1}-\lambda_1+1\}.$  Let $\sigma$ be the permutation whose first $\lambda_1$ parts are $(\lambda_1+(\lambda^T)_1,\lambda_1+(\lambda^T)_2-1, \hdots , \lambda_1+(\lambda^T)_{\lambda_1}-\lambda_1+1)$. These parts are obtained by listing the diagonals containing the terminal cells; that is, cell $(\lambda_1+(\lambda^T)_j,j)$ lies in diagonal $\mathcal{L}_{\lambda_1+(\lambda^T)_j-j+1}$.  Proposition~\ref{P:minordet} implies that $$\ndet(M^{(\lambda_1,\sigma)}_{\underline{\mu}}) =\ndet\left((H_{(\mu_i-i)-(\lambda_j-j)})_{i,j}\right)= \I_{\mu/\lambda},$$ as desired, since $\lambda_j=\nu_{\lambda_1+j}^{(\lambda_1)},$ as observed above.
\end{proof}

Note that there are other choices for the prefix of $\mu$ which yield the same result.  This particular choice is taken so that none of the first $\lambda_1$ rows of the matrix $M_{\underline{\mu}}$ contains the entry $H_0$.  In particular, observe that for the \allowhooks~given in the proof of Theorem~\ref{T:mainskew}, $\Delta(\h(j , \c_j)) > 0$ for $1 \le j \le \lambda_1$.  To see this, first note that $\h(j , \c_j)$ contains $\lambda_1-j+1$ blue cells in row $j$.  This means $\spin(j)=\lambda_1-j+1>0$.  Since $\Delta(\h(j , \c_j)) \ge \spin(j)$, we have $\Delta(\h(j , \c_j)) > 0$ for $1 \le j \le \lambda_1$.

This allows us to determine the $H$-decomposition of $\I_{\mu/\lambda}$ by first computing the $H$-decomposition of $\I_{\underline{\mu}}$ and then identifying the terms in this decomposition beginning with $$H_{2 \lambda_1 - 1 + (\lambda^T)_1} H_{2 \lambda_1 - 3 + (\lambda^T)_2} \cdots H_{2 \lambda_1 - (2 \lambda_1 - 1) + (\lambda^T)_{\lambda_1}}$$ (and deleting all other terms).  Removing this initial product from each of the identified terms and then summing the resulting terms produces the $H$-decomposition of $\I_{\mu/\lambda}$.

\begin{ex}
Let $\mu=(5,-1,3,4)$ and $\lambda=(3,1,0,0)$.  Then $\ell=2$ and $\lambda^T=(2,1,1,0)$.  Therefore, $\underline{\mu}=(3,3,3,5,-1,3,4)$ and the initial \GBPR~diagram for $\underline{\mu}$ is 

\hskip 1.05truein
\begin{picture}(300,100)
\put(0,0){
\hbox{
\vbox{
\Yfillcolour{lightblue}
\young(\ \ \ \ )
\vskip -1.295pt
\Yfillcolour{lightblue}
\young(\ \ \ )
\vskip -1.295pt
\Yfillcolour{lightred}
\young(\ )
\vskip -1.295pt
\Yfillcolour{lightblue}
\young(\ \ \ \ \ )
\vskip -1.295pt
\Yfillcolour{lightblue}
\young(\  \ \ )
\vskip -1.295pt
\Yfillcolour{lightblue}
\young(\  \ \ )
\vskip -1.295pt
\Yfillcolour{lightblue}
\young(\  \ \ )
}}
}
\end{picture}

\vspace*{.1in}

The first $3$ (since $\lambda_1=3$) \allowhooks~terminate at the \allowcells~
$\{(5,1),(4,2),(4,3)\}$ (since $\lambda_1+(\lambda^T)_1=3+2=5, \lambda_1+(\lambda^T)_2=3+1=4,$ and $\lambda_1+(\lambda^T)_3=3+1=4$), producing the partial \allowhookfilling

\hskip 1.05truein
\begin{picture}(300,100)
\put(0,0){
\hbox{
\vbox{
\Yfillcolour{lightblue}
\young(\ \ \ \ )
\vskip -1.295pt
\Yfillcolour{lightblue}
\young(\ \ \ )
\vskip -1.295pt
\young(!<\Yfillcolour{grey}>\  !<\Yfillcolour{lightred}>\ \  )
\vskip -1.295pt
\young(!<\Yfillcolour{grey}>\ \ \ !<\Yfillcolour{lightblue}>\ \ )
\vskip -1.295pt
\Yfillcolour{grey}
\young(\  \ \  )
\vskip -1.295pt
\Yfillcolour{grey}
\young(\  \ \ )
\vskip -1.295pt
\Yfillcolour{grey}
\young(\  \ \ )}}}

\put(35,3){\line(1,0){60}}
\put(35,3){\line(0,1){55}}

\put(65,16){\line(1,0){30}}
\put(65,16){\line(0,1){26}}

\put(95,29){\line(0,1){13}}

\end{picture}

\vspace*{.1in}

The skew immaculate $\I_{(5,-1,3,4)/(3,1,0,0)}$ can be obtained by selecting the terms appearing in $\I_{(3,3,3,5,-1,3,4)}$ whose first three terms are $H_{7} H_{4} H_{2}$.

Notice that the submatrix $M_{\underline{\mu}}(1|5)(2|3)(3|2)$ of the matrix $$M_{\underline{\mu}} = \begin{bmatrix} H_3 &H_4 & H_5 & H_6 & H_7 & H_8 & H_9   \\ H_2 & H_3 & H_4 & H_5 & H_6 & H_7 & H_8 \\
H_1 & H_2 & H_3 & H_4 & H_5 & H_6 & H_7  \\
H_2 & H_3 & H_4 & H_5 & H_6 & H_7 & H_8 \\ H_{-5} & H_{-4} & H_{-3} & H_{-2} & H_{-1} & H_0 & H_1 \\ H_{-2} & H_{-1} & H_0 & H_1 & H_2 & H_3 & H_4 \\ H_{-2} & H_{-1} & H_0 & H_1 & H_2 & H_3 & H_4 \end{bmatrix}$$ (obtained by deleting the first through third rows and columns $5,3,$ and $2$) is given by $$\begin{bmatrix} H_2 & H_5 & H_7 & H_8 \\ H_{-5} & H_{-2} & H_0 & H_1 \\ H_{-2} & H_1 & H_3 & H_4 \\ H_{-2} & H_1 & H_3 & H_4 \end{bmatrix}$$ which is precisely the matrix defined by $(M_{\mu/\lambda})_{i,j} = H_{(\mu_i-i)-(\lambda_j-j)}$
\end{ex}

\subsection{Skewing by non-partition shapes}{\label{S:nuweakdec}}

First assume that $\lambda$ is an integer sequence, all of whose parts are nonnegative.  As in the analogous $\Sym$ situation, if we set  
$$\hat \lambda=(\lambda_1,\lambda_2,\ldots,\lambda_{p-1},\lambda_{p+1}-1,\lambda_{p}+1,\lambda_{p+2},\ldots,\lambda_k)$$
and $(M_{\mu/\hat{\lambda}})_{i,j}=H_{{(\mu_i-i)-(\hat\lambda_j-j)}},$
then
$(M_{\mu/\lambda})_{i,j}=(M_{\mu/\hat{\lambda}})_{i,j}$ for $j \notin \{p, p+1\}.$
Furthermore, since $\hat\lambda_{p+1}=\lambda_p+1$
and $\hat\lambda_p=\lambda_{p+1}-1$, we have
$$(M_{\mu/\lambda})_{i,p}=H_{(\mu_i-i)-(\lambda_p-p)}=H_{{(\mu_i-i)-(\hat\lambda_{p+1}-p-1)}}=(M_{\mu/\hat{\lambda}})_{i,p+1}$$
and 
$$(M_{\mu/\lambda})_{i,p+1}=H_{(\mu_i-i)-(\lambda_{p+1}-(p+1))}=H_{{(\mu_i-i)-(\hat\lambda_{p}+1-(p+1))}}=(M_{\mu/\hat{\lambda}})_{i,p}$$ for all $1 \le i \le k$.
Thus, $\I_{\mu/\lambda}=-\I_{\mu/\hat \lambda},$ since $M_{\mu/\hat{\lambda}}$ is obtained from $M_{\mu/\lambda}$ by swapping columns $p$ and $p+1$.

Note that if $\lambda_{i+1}=\lambda_i+1$, for some $1 \le i < k$,
then columns $i$ and $i+1$ of $M_{\mu/\lambda}$ are identical.  Thus the row by row expansion of $\ndet(M_{\mu/\lambda})$ starting in the first row and continuing down yields that
$\I_{\mu/\lambda}=\ndet(M_{\mu/\lambda})=0.$

When taken together, the results of the two
previous paragraphs imply that, without loss of generality, we may assume that the entries of $\lambda$ are weakly decreasing.  If the entries of $\lambda$ are not weakly decreasing apply the straightening operator $\hat{\lambda}$ (adjusting the sign each time) until the result is a partition (in which case Theorem~\ref{T:mainskew} applies) or a sequence containing a one-step increase (in which case the skew immaculate function is zero).

Finally, if any terms of $\lambda$ are negative, let $\lambda_j$ be the smallest part of $\lambda$.  Add $-\lambda_j$ to every part of $\mu$ and every part of $\lambda$; call the resulting sequences $\aug_{\lambda_j}(\mu)$ and $\aug_{\lambda_j}(\lambda)$ respectively.  Then $M_{\aug_{\lambda_j}(\mu) / \aug_{\lambda_j}(\lambda)} = M_{\mu/\lambda}$, so 
\begin{equation}
\label{E:Getridofnegatives}
\I_{\mu/\lambda}=\I_{\aug_{\lambda_j}(\mu) / \aug_{\lambda_j}(\lambda)}
\end{equation}
and we can apply the above techniques to $\I_{\aug_{\lambda_j}(\mu) / \aug_{\lambda_j}(\lambda)}$ to find the decomposition of $\I_{\mu/\lambda}$ into the complete homogeneous noncommutative symmetric functions.

\begin{ex}
Let $\mu=(2,-5,0,1)$ and $\lambda=(2,-3,1,6)$.  To determine $\I_{\mu/\lambda}$ combinatorially, we must apply several steps before finding the tunnel hook fillings.
\begin{enumerate}
\item Since $\lambda$ includes negative parts (with smallest part equal to $-3$), add $3$ to every part of $\lambda$ and every part of $\mu$ to get $$\aug_3(\mu)/\aug_3(\lambda) = (5,-2,3,4)/(5,0,4,9).$$
\item Since $\aug_3(\lambda)$ is not a partition, apply the straightening operator to get $$\I_{\mu/\lambda} = -\I_{(5,-2,3,4)/(5,0,8,5)} = \I_{(5,-2,3,4)/(5,7,1,5)} $$ $$= -\I_{(5,-2,3,4)/(6,6,1,5)} = \I_{(5,-2,3,4)/(6,6,4,2)}.$$
\end{enumerate}
\end{ex}

The following corollary (an immediate consequence of Proposition~\ref{P:minordet} and Proposition~\ref{T:perm}) provides a way to expand the immaculates $\I_\mu$ in terms of skew immaculates.  A \emph{linear permutation} (or \emph{partial permutation}) of a $k$-element set is an ordered arrangement of an $m$-element subset of a $k$-element set.  (Note that in the literature, these are also sometimes referred to as partial permutations or $k$-permutations.)

Let $A_{k,m}$ be the set of all linear permutations of an $m$-element subset of a $k$-element set.  Define the \emph{sign} $\sign(\pi)$ of a linear permutation $\pi=(\pi_1, \hdots , \pi_m)$ to be $(-1)^{\delta}$, where $\delta$ is the number of pairs $a,b$ such that either $a=\pi_i> \pi_j=b$ with $i<j$ or $a=\pi_i >q$ with $q$ a positive integer not equal to $\pi_j$ for any $j$.

\begin{cor}
Let $\mu \in \mathbb{Z}^k$ and $m$ be a fixed integer such that $1 \le m \le k$.  If $\tilde{\mu}$ is the sequence obtained by deleting the first $m$ parts of $\mu$, then
$$
\I_\mu=
\sum_{\pi
\in A_{k,m}}
\sign(\pi)
\left( \prod_{1 \le i \le m} H_{\mu_i-i+\pi_i} \right)
\I_{\tilde{\mu}/\nu^{(m)}},
$$

\noindent
where $\nu^{(m)}$ is the sequence obtained from the construction of the \allowhooks~corresponding to $\pi$ in the diagram $D_{\mu}$.
\end{cor}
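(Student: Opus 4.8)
The plan is to derive the formula directly from the permutation (Laplace) expansion of $\ndet(M_\mu)$, collecting terms according to the choice of the first $m$ entries of the indexing permutation. Recall from Theorem~\ref{T:NsymJacobiTrudi} that $\I_\mu = \ndet(M_\mu) = \sum_{\sigma \in S_k} \sign(\sigma)\, (M_\mu)_{1,\sigma_1}\cdots (M_\mu)_{k,\sigma_k}$, with the product taken in order of increasing row index because the $H_a$ do not commute. First I would partition $S_k$ according to the initial segment $\pi = (\sigma_1, \ldots, \sigma_m)$, which ranges over all linear permutations in $A_{k,m}$; the remaining entries $(\sigma_{m+1}, \ldots, \sigma_k)$ then range over all bijections from $\{m+1, \ldots, k\}$ onto the complementary set $\beta_{\sigma,m} = \{1,\ldots,k\}\setminus\{\pi_1,\ldots,\pi_m\}$. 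Since rows $1$ through $m$ are expanded first, the leading product $(M_\mu)_{1,\pi_1}\cdots(M_\mu)_{m,\pi_m} = \prod_{1\le i\le m} H_{\mu_i - i + \pi_i}$ factors out on the \emph{left} of every term sharing that initial segment, which respects the noncommutative ordering demanded by $\ndet$.

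Next I would factor the sign. Writing $d = \sum_{i=1}^k L(\sigma_i)$ for the inversion count via the Lehmer code, I split $d = \sum_{i=1}^m L(\sigma_i) + \sum_{i=m+1}^k L(\sigma_i)$. The second sum depends only on the relative order of the completion and will become the internal sign of the submatrix determinant. For the first sum, each $L(\sigma_i)$ with $i \le m$ counts inversions of $\pi_i$ against later $\pi_j$ (with $j\le m$) together with the number of elements of $\beta_{\sigma,m}$ smaller than $\pi_i$; since $\beta_{\sigma,m}$ is exactly the set of positive integers in $\{1,\ldots,k\}$ missing from the image of $\pi$, this quantity is precisely the statistic $\delta$ appearing in the definition of $\sign(\pi)$ for linear permutations (case $a=\pi_i>\pi_j=b$ with $i<j$ gives the interior inversions, and case $a=\pi_i>q$ with $q$ outside the image gives the complementary count). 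Hence $\sum_{i=1}^m L(\sigma_i)$ depends only on $\pi$, and $\sign(\sigma) = \sign(\pi)\cdot(-1)^{\sum_{i=m+1}^k L(\sigma_i)}$. Verifying that the somewhat unusual definition of $\sign(\pi)$ matches this partial inversion count is the one step demanding genuine care; everything else is bookkeeping.

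With the sign and the leading product extracted, the residual sum over completions $(\sigma_{m+1},\ldots,\sigma_k)$ is exactly $\ndet(M_\mu^{(m,\pi)})$, the Laplace expansion of the submatrix obtained by deleting rows $1,\ldots,m$ and columns $\pi_1,\ldots,\pi_m$. Proposition~\ref{P:minordet} identifies its entries as $(M_\mu^{(m,\pi)})_{i,j} = H_{(\tilde\mu_i - i) - (\tilde\nu_j - j)}$, where $\tilde\nu$ consists of the last $k-m$ parts of the sequence $\nu^{(m)}$ built from the first $m$ \allowhooks~associated to $\pi$; by Definition~\ref{Def:Imunu} this determinant is precisely $\I_{\tilde\mu/\nu^{(m)}}$. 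Finally, Proposition~\ref{T:perm} guarantees that linear permutations $\pi \in A_{k,m}$ correspond bijectively to the admissible choices of the first $m$ \allowhooks~in $D_\mu$, which is what permits $\nu^{(m)}$ to be read off combinatorially from Procedure~\ref{A:THC}. Summing over all $\pi \in A_{k,m}$ then yields $\I_\mu = \sum_{\pi\in A_{k,m}} \sign(\pi)\left(\prod_{1\le i\le m} H_{\mu_i-i+\pi_i}\right)\I_{\tilde\mu/\nu^{(m)}}$, as claimed.
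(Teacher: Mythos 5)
Your proposal is correct and follows essentially the same route as the paper, which states the corollary as an immediate consequence of Proposition~\ref{P:minordet} and Proposition~\ref{T:perm}: group the Laplace expansion of $\ndet(M_\mu)$ by the initial segment $\pi$, factor the leading product on the left, split the Lehmer-code sign into $\sign(\pi)$ times the sign of the completion, and identify the residual sum with $\ndet(M_\mu^{(m,\pi)})=\I_{\tilde\mu/\nu^{(m)}}$. Your verification that $\sum_{i=1}^m L(\sigma_i)$ equals the statistic $\delta$ defining $\sign(\pi)$ is exactly the detail the paper leaves implicit, and it checks out.
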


For example, when $\mu=(4,3,3,2)$ and $m=2$, we have the following decomposition of $\I_{(4,3,3,2)}$.
\begin{align*}
\I_{(4,3,3,2)}=\
&H_{(4,3)}\I_{(3,2)}
- H_{(4,4)}\I_{(3,2)/(1,0)}
+ H_{(4,5)}\I_{(3,2)/(1,1)}\\
-\ &H_{(5,2)}\I_{(3,2)}
+ H_{(5,4)}\I_{(3,2)/(2,0)}
- H_{(5,5)}\I_{(3,2)/(2,1)}\\
+\ &H_{(6,2)}\I_{(3,2)/(1,0)}
- H_{(6,3)}\I_{(3,2)/(2,0)}
+ H_{(6,5)}\I_{(3,2)/(2,2)}\\
-\ &H_{(7,2)}\I_{(3,2)/(1,1)}
+ H_{(7,3)}\I_{(3,2)/(2,1)}
- H_{(7,4)}\I_{(3,2)/(2,2)}
\end{align*}

Note again that applying the forgetful map to this expansion produces the expansion of a Schur function in terms of skew Schur functions with complete homogeneous symmetric functions as coefficients.  Thus we have the following Schur function decomposition.
\begin{align*}
s_{(4,3,3,2)}=\ 
&h_{(4,3)}s_{(3,2)}
- h_{(4,4)}s_{(3,2)/(1)}
+ h_{(4,5)}s_{(3,2)/(1,1)}\\
-\ &h_{(5,2)}s_{(3,2)}
+ h_{(5,4)}s_{(3,2)/(2)}
- h_{(5,5)}s_{(3,2)/(2,1)}\\
+\ &h_{(6,2)}s_{(3,2)/(1)}
- h_{(6,3)}s_{(3,2)/(2)}
+ h_{(6,5)}s_{(3,2)/(2,2)}\\
-\ &h_{(7,2)}s_{(3,2)/(1,1)}
+ h_{(7,3)}s_{(3,2)/(2,1)}
- h_{(7,4)}s_{(3,2)/(2,2)}
\end{align*}

\subsection{Recovering the Schur function decomposition}{\label{sec:schur}}

The forgetful map applied to immaculate functions produces the Schur functions, and the forgetful map applied to the $H$ basis for $\NSym$ produces the $h$ basis for $\Sym$.  Therefore, applying the forgetful map to the decomposition of $\I_{\mu/\lambda}$ into the $H$ basis (where $\mu$ is a partition and $\lambda$ is a partition such that $\lambda_i \le \mu_i$ for all $i$) produces the decomposition of the skew Schur function $s_{\mu/\lambda}$ into the $h$ basis.

Our approach, therefore, provides an alternative to the special rim hooks appearing in the E\u{g}ecio\u{g}lu-Remmel~\cite{EgeRem90} combinatorial interpretation of the inverse Kostka matrix.  While \allowhooks~and special rim hooks share some similarities, they are not simply shifts or translates of one another.  

Rim hooks are collections of cells in the diagram of a partition satisfying the following properties.
\begin{enumerate}
\item Rim hooks consist of cells on the northeastern rim of the diagram.
\item The cells in a rim hook are connected.
\item A rim hook contains no $2 \times 2$ squares.
\item A rim hook is \emph{special} if every rim hook includes a cell in the leftmost column.
\end{enumerate}

If one attempts to apply rim hooks to immaculates indexed by shapes which are not partitions, 
one must sacrifice either property (1) or (2).  Example~\ref{E:issues} illustrates this obstruction, since the shape $(3,8,4,1)$ must admit a special rim hook filling whose lengths are $6,0,8,2$ respectively.  In order to have a special rim hook of length $8$ beginning in the third row, there cannot also be a connected special rim hook of length $6$ beginning in the top row.

\begin{ex}{\label{E:issues}}
Let $\mu=(3,8,4,1)$ and consider the term $H_{(6,0,8,2)}$.  To fill the shape $(3,8,4,1)$ with special rim hooks, a special rim hook of length $6$ starting in the top row is necessary, but this is not compatible with a special rim hook of length $8$ starting in the third row from the top.

\vskip 5pt
\hskip .425truein
\hbox{
\vbox{
\Yfillcolour{lightblue}
\young(\  )
\vskip -1.295pt
\Yfillcolour{lightblue}
\young(\ \ \ \ )
\vskip -1.295pt
\Yfillcolour{lightblue}
\young(\ \ \ \ \ \ \ \ )
\vskip -1.295pt
\Yfillcolour{lightblue}
\young(\ \ \  )
}
}
\end{ex}

If one relaxes the rules for special rim hooks to try to adress this concern, other lengths (such as an initial rim hook of length $7$) become available which are not legal options since they do not appear as indices in the $H$-decomposition of $\I_{(6,0,8,2)}$.  In fact, Loehr and Niese point out that their special rim hook method (which solely applies to immaculates indexed by partitions) does not compute the content of a diagram simply by listing the lengths in some predetermined order~\cite{LoeNie21}.  


In order to generalize the E\u{g}ecio\u{g}lu-Remmel decomposition idea to composition shapes (for $\NSym$), we need objects that ``tunnel" into the interior of the diagram.  Our \allowhooks~satisfy properties (2) and (3) above, as well as a variant of (1) stating that \allowhooks~are comprised of cells on the South-Western border of the diagram.  See Example~\ref{E:rimhooks} to compare a tunnel hook covering of skew shape $(7,7,7,6,6,4)/(2,2,1,1)$ and the corresponding special rim hook tableau.

\begin{ex}{\label{E:rimhooks}}
Let $\mu=(7,7,7,6,6,4)$ and $\lambda=(2,2,1,1)$.  The \allowhookfilling~which produces the term $H_{(8,10,4,8,0,1)}$ is shown.  

\hskip .6truein
\begin{picture}(300,80)
\put(0,0){
\hbox{
\vbox{
\Yfillcolour{lightblue}
\young(\ \ \ \ )
\vskip -1.295pt
\Yfillcolour{lightblue}
\young(\ \ \ \ \ \ !<\Yfillcolour{lightpurple}>\ )
\vskip -1.295pt
\Yfillcolour{grey}
\young(\  !<\Yfillcolour{lightblue}>\ \ \ \ \ )
\vskip -1.295pt
\Yfillcolour{grey}
\young(\  !<\Yfillcolour{lightblue}>\ \ \ \ \ \ )
\vskip -1.295pt
\Yfillcolour{grey}
\young(\ \ !<\Yfillcolour{lightblue}>\ \ \ \ \ )
\vskip -1.295pt
\Yfillcolour{grey}
\young(\ \ !<\Yfillcolour{lightblue}>\ \ \ \ \ )
}}}

\put(95,3){\line(1,0){120}}
\put(95,3){\line(0,1){26}}
\put(65,29){\line(1,0){30}}

\put(125,16){\line(1,0){90}}
\put(125,16){\line(0,1){26}}
\put(65,42){\line(1,0){60}}
\put(65,42){\line(0,1){13}}
\put(35,55){\line(1,0){30}}

\put(155,29){\line(1,0){60}}
\put(155,29){\line(0,1){13}}

\put(185,42){\line(0,1){13}}
\put(95,55){\line(1,0){90}}
\put(95,55){\line(0,1){13}}
\put(35,68){\line(1,0){60}}

\put(210,55){\line(1,0){10}}

\put(120,68){\line(1,0){10}}

\end{picture}

\vskip 2pt

\noindent The corresponding special rim hook tableau is shown below.  Note that this produces the term $h_{(10,8,8,4,1,0)}$ due to the commutativity of $\Sym$.  There are no other rim hook tableaux of this shape corresponding to $h_{(10,8,8,4,1,0)}$.

\hskip .65truein
\begin{picture}(300,80)
\put(0,0){
\hbox{
\vbox{
\Yfillcolour{lightblue}
\young(\ \ \ \ )
\vskip -1.295pt
\Yfillcolour{lightblue}
\young(\ \ \ \ \ \ )
\vskip -1.295pt
\Yfillcolour{grey}
\young(\  !<\Yfillcolour{lightblue}>\ \ \ \ \ )
\vskip -1.295pt
\Yfillcolour{grey}
\young(\  !<\Yfillcolour{lightblue}>\ \ \ \ \ \ )
\vskip -1.295pt
\Yfillcolour{grey}
\young(\ \ !<\Yfillcolour{lightblue}>\ \ \ \ \ )
\vskip -1.295pt
\Yfillcolour{grey}
\young(\ \ !<\Yfillcolour{lightblue}>\ \ \ \ \ )
}}}

\put(95,3){\line(1,0){60}}
\put(95,3){\line(0,1){13}}

\put(125,16){\line(1,0){60}}
\put(185,3){\line(0,1){13}}
\put(185,3){\line(1,0){30}}
\put(125,16){\line(0,1){13}}
\put(65,29){\line(1,0){60}}

\put(155,29){\line(1,0){60}}
\put(155,29){\line(0,1){13}}
\put(215,16){\line(0,1){13}}
\put(95,42){\line(1,0){60}}
\put(95,42){\line(0,1){13}}
\put(35,55){\line(1,0){60}}

\put(60,42){\line(1,0){10}}

\put(185,42){\line(0,1){13}}
\put(125,55){\line(1,0){60}}
\put(125,55){\line(0,1){13}}
\put(35,68){\line(1,0){90}}

\end{picture}

\end{ex}

Our \allowhook~approach also allows us to generalize to ribbon decompositions of certain immaculate functions, improving upon results of Campbell~\cite{Cam17}.

\subsection{Schur functions in noncommuting variables}

This subsection uses  \cite{ALvW22} for foundational material and notation.
The space of symmetric functions with noncommuting variables (NCSYM) has analogues for the common
symmetric functions in SYM.
Source skew functions in noncommuting variables $s_{[\lambda/\mu]}$ satisfy a variation of the Jacobi-Trudi identity:
\begin{equation}
 s_{[\mu/\nu]}=\mathfrak{det}\left(
 \frac{1}{(\mu_i-i-(\nu_j-j))}h_{[\mu_i-i-(\nu_j-j)]}\right)
\end{equation}
where $h_{[a]}$ is a noncommuting analogue of
the complete homogeneous symmetric function
$h_a$ and $\mu$ and $\nu$ are integers associated
with \textit{set partitions}.
Thus, we can use Theorem \ref{T:mainskew}
to give a combinatorial interpretation of the
corresponding inverse Kostka matrix.

Note that the definition of the NCSYM skew Schur functions is
$\delta \odot s_{[\lambda/\mu]}$ where $\delta \in \mathfrak{S}_n$.  Note that $\delta$ acts on the 
set partition $[\lambda/\mu]$ but it is not always true that $\delta \odot [\lambda/\mu]= \delta ([\lambda/\mu])$ (\cite{ALvW22}, Corollary 4.8).

\section{Ribbon decompositions of immaculate functions}{\label{sec:ribbon}}

The ribbon basis for $\NSym$ is another generalization of Schur functions to $\NSym$.  There are a number of excellent sources for background on ribbons in $\Sym$ and $\NSym$~\cite{GKLLRT95,Mac95}.  Our work in this section extends results of Campbell~\cite{Cam17}.

  We take the following formula as our definition for the ribbon basis.

 \begin{defn}
 The \emph{ribbon basis} for $\NSym$ is given by the formula $$R_{\alpha} = \sum_{\beta \succeq \alpha} (-1)^{\ell(\beta)-\ell(\alpha)}H_{\beta},$$ where $\succeq$ is the refinement order on compositions and where $\ell(\alpha)$ is the length (i.e. number of parts) of $\alpha$.   Here
for $a \in \mathbb{Z}$, $R_a=H_a$ and $R_0=H_0=1$.
\end{defn}  

We will also make use of the formula that converts a complete homogeneous function in $\NSym$ indexed by a strong composition into the ribbon basis for $\NSym$~\cite{GKLLRT95}).  That is, $$H_{\alpha} = \sum_{\beta \succeq \alpha} R_{\beta}.$$

 Multiplication in the $H$ basis is fairly straightforward; simply concatenate the indexing compositions.  Multiplication in the ribbon basis requires two steps, which are described below.

\begin{defn}{\cite{GKLLRT95}}
Let $\alpha=(\alpha_1,\alpha_2,\cdots,\alpha_k)$
and 
$\beta=(\beta_1,\beta_2,\cdots, \beta_j)$ be integer sequences.  Then the concatenation of $\alpha$ and 
$\beta,$ denoted by $\alpha \cdot \beta,$
is given by
\begin{equation*}
    \alpha\cdot \beta=
    (\alpha_1,\alpha_2,\cdots,\alpha_k,\beta_1,\beta_2,\cdots, \beta_j).
\end{equation*}
 Define \emph{near concatenation} $\odot$ by
 \begin{equation*}
    \alpha\odot \beta=
    {(\alpha_1,\alpha_2,\cdots,\alpha_k+\beta_1,\beta_2,\cdots, \beta_j)}.
\end{equation*}
 The product of two ribbons, $R_{\alpha}$ and $R_{\beta}$ (where $\alpha$ and $\beta$ are compositions), is given by
 \begin{equation}
     R_{\alpha} R_{\beta}=
      R_{\alpha\cdot\beta}+R_{\alpha\odot\beta}.
    \label{E:Rstar}
 \end{equation}
 \end{defn}

We now describe a method for expanding the \emph{refinement} order to weak compositions compositions, by first defining a coarsening of a sequence (generalizing the notion of coarsening compositions).

\begin{defn}
Let $\alpha=(\alpha_1, \alpha_2, \hdots , \alpha_k)$ be an integer sequence.  Given a subset $S=\{i_1, i_2, \hdots i_t \}$ of $[k-1]$, the \emph{coarsening $\alpha_S$ of $\alpha$ with respect to $S$} is given by
$$\Theta(\alpha,S)=(\alpha_1 \star_1 \alpha_2 \star_2 \alpha_3 \star_3 \cdots \star_{k-2} \alpha_{k-1} \star_{k-1} \alpha_k)$$ where

\begin{equation}
  \star_i =\begin{cases}
    + & \text{if $i \in S$}.\\
    , & \text{otherwise}.
  \end{cases}
\end{equation}

The \emph{refinement} ordering on weak compositions is given by $\beta \succeq \alpha$ if and only if $\beta$ is a \emph{coarsening} of $\alpha$.  If $\beta \succeq \alpha$, we say that $\alpha$ is a \emph{refinement} of $\beta$. 
\end{defn}

For example, let $\alpha=(5,2,1,4,3,3,2,6,2,3)$ and let $S=\{2,3,5,8\}$.  Then the coarsening of $\alpha$ with respect to $S$ is $(5 , 2 + 1 + 4 , 3 + 3 , 2 , 6 + 2 , 3)=(5,7,6,2,8,3)$.  Therefore $(5,7,6,2,8,3) \succeq (5,2,1,4,3,3,2,6,2,3)$.  Note that as long as $\alpha$ is a strong composition (meaning no parts of $\alpha$ are equal to zero), there is a unique subset producing each distinct coarsening.

When we expand an immaculate function indexed by a strong composition into the homogeneous basis for $\NSym$, some of the terms are indexed by weak compositions.  Since $H_0=1$, we usually simply delete the zeros and consider the indices to be shorter strong compositions.  However, the proofs in this section are aided by keeping track of the positions of the zeros in the indexing compositions.  If $\alpha=(\alpha_1, \alpha_2, \hdots  , \alpha_k)$ is a weak composition whose nonzero parts are $\alpha_{j_1}, \alpha_{j_2}, \hdots , \alpha_{j_{s}}$ (listed in increasing order), let $\fl(\alpha)$ be the composition $(\alpha_{j_1}, \alpha_{j_2}, \hdots , \alpha_{j_{s}})$.  It will be useful to think of $\fl(\alpha)$ as a coarsening; to do so, we standardize notation for the coarsening set.

In the following, we assume the first part of the weak composition is nonzero.  All of the compositions appearing in this section will begin with a nonzero part since this section applies specifically to immaculate functions indexed by compositions rather than weak compositions or sequences.

\begin{defn}
Let $\Delta=(\Delta_1, \hdots , \Delta_{\ell})$ be a weak composition with $\Delta_1 >0$ and let the set $\{i_1, \hdots , i_j\}$ be the collection of indices whose corresponding part is $0$.  Let $Z=\{i_1-1, \hdots , i_j-1\} \subseteq [\ell-1]$.  Then an \emph{allowable flat coarsening subset} for $\Delta$ is any subset of $[\ell-1]$ containing $Z$.  
\end{defn}

For example, let $\alpha=(5,0,3,0,1,5,0,4)$ and consider the strong composition $(5,3,1,9)$ obtained by flattening and coarsening $\alpha$.  The only allowable flat coarsening subset producing $(5,3,1,9)$ is $S=\{1,3,6,7\}$.    Although the coarsening of $\alpha$ with respect to the set $Q=\{ 2,3,6,7\}$ also produces the strong composition $(5,0+3+0,1,5+0+4)=(5,3,1,9)$, $Q$ is not an allowable flat coarsening subset since $\alpha_2=0$ but $2-1 \notin Q$.

Notice that for any composition $\alpha$ and any allowable flat coarsening subset $S$, the coarsening $\Theta(\alpha,S)$ will be a strong composition.  The main reason for introducing the allowable flat coarsening subset terminology is to provide a unique way to represent the coarsening of a weak composition.

\begin{lem}
Given a weak composition $\Delta=(\Delta_1, \hdots , \Delta_{\ell})$ such that $\Delta_1 \not= 0$, and a coarsening $\Delta'$ of $\fl(\Delta)$, there is a unique allowable flat coarsening subset $S$ such that $\Theta(\Delta,S)=\Delta'$.
\end{lem}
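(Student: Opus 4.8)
The plan is to reduce the statement for the weak composition $\Delta$ to the already-noted uniqueness of coarsening subsets for \emph{strong} compositions, applied to $\fl(\Delta)$. Write $\fl(\Delta)=(a_1,\ldots,a_s)$ and let $j_1<j_2<\cdots<j_s$ be the positions in $[\ell]$ of the nonzero parts of $\Delta$, so that $\Delta_{j_t}=a_t$ and $j_1=1$ (since $\Delta_1\neq 0$). Because $\fl(\Delta)$ is a strong composition and $\Delta'$ is a coarsening of it, there is a unique $T\subseteq[s-1]$ with $\Theta(\fl(\Delta),T)=\Delta'$. I would then construct $S$ from $T$ and show it is the unique allowable flat coarsening subset with $\Theta(\Delta,S)=\Delta'$.

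First I would record a structural fact: for any allowable $S$ (that is, $Z\subseteq S$), every maximal block of parts joined by $+$ in $\Theta(\Delta,S)$ begins with a nonzero part, so its sum is nonzero and $\Theta(\Delta,S)$ is a strong composition. Indeed, any zero of $\Delta$ at a position $q$ has $q-1\in Z\subseteq S$, hence is joined to its left neighbor and cannot begin a block, while $\Delta_1\neq 0$ rules out the first position. This guarantees that $\Theta(\Delta,S)$ is comparable to $\Delta'$ part by part.

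The heart of the argument is a local observation about consecutive nonzero parts $a_t,a_{t+1}$, sitting at positions $j_t,j_{t+1}$ with only zeros strictly between them. Each of the positions $j_t,j_t+1,\ldots,j_{t+1}-2$ equals $q-1$ for some zero position $q$, so all of them are forced into $S$ by allowability; the only remaining position in that range is $j_{t+1}-1$. Consequently $a_t$ and $a_{t+1}$ lie in the same block of $\Theta(\Delta,S)$ precisely when $j_{t+1}-1\in S$. Requiring the nonzero parts of $\Delta$ to be grouped exactly as $T$ groups the parts of $\fl(\Delta)$ therefore forces, for each $t\in[s-1]$,
$$ j_{t+1}-1\in S \iff t\in T. $$
Since every $p\in[\ell-1]$ satisfies either $\Delta_{p+1}=0$ (whence $p\in Z$) or $\Delta_{p+1}\neq 0$ (whence $p=j_{t+1}-1$ for a unique $t$), the conditions $Z\subseteq S$ and the displayed equivalence determine $S$ completely, giving uniqueness. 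For existence I would set $S:=Z\cup\{\,j_{t+1}-1 : t\in T\,\}$; the two sets in this union are disjoint (one arising from zero positions, the other from nonzero positions), so $|S|=(\ell-s)+|T|$ and hence $\ell-|S|=s-|T|$ matches the number of parts of $\Delta'$, while by construction the interspersed zeros contribute nothing to the block sums, so $\Theta(\Delta,S)$ agrees with $\Theta(\fl(\Delta),T)=\Delta'$ part by part.

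The step I expect to be most delicate is the local observation isolating $j_{t+1}-1$ as the unique free position between consecutive nonzero parts, and thereby translating the block structure of $\Theta(\Delta,S)$ into the coarsening set $T$ of $\fl(\Delta)$. Once this correspondence is pinned down, both existence and uniqueness follow by the bookkeeping above, leaning on the uniqueness of coarsening subsets for the strong composition $\fl(\Delta)$.
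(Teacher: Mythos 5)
Your proof is correct, but it takes a genuinely different route from the paper's. The paper argues uniqueness directly by contradiction: it takes two allowable flat coarsening subsets $S\neq\hat S$ with the same coarsening, looks at the smallest position $s$ lying in exactly one of them, notes that allowability forces $\Delta_{s+1}>0$, and then compares the block ending (or not ending) at $s$ to conclude that the corresponding parts of the two coarsenings differ; it does not explicitly address existence. You instead set up an explicit bijection between allowable subsets $S$ and subsets $T\subseteq[s-1]$ for the strong composition $\fl(\Delta)$, by observing that allowability forces every position of $[\ell-1]$ into $S$ except the positions $j_{t+1}-1$ immediately preceding the later nonzero parts, so that the grouping of the nonzero parts is governed entirely by which of these free positions lie in $S$; uniqueness then reduces to the (already recorded) uniqueness of coarsening subsets for strong compositions. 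Your local analysis of the forced versus free positions is sound, including the degenerate case of adjacent nonzero parts and the trailing zeros. What your approach buys is an existence proof and an explicit description of $S$ as $Z\cup\{j_{t+1}-1:t\in T\}$, which the paper leaves implicit; what the paper's approach buys is brevity, since the first-disagreement argument needs no case analysis of the positions of the zeros.
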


\begin{proof}

Suppose $S$ and $\hat S$ are two allowable flat coarsening subsets such that $\Theta(\Delta,S)=\Delta' = \Theta(\Delta,\hat{S})$.  Let $s$ be the smallest integer that is in exactly one of $S$ and $\hat S$.  Without loss of generality, assume that $s \in S.$  Furthermore, we have  $\Delta_{s+1}>0,$ since if $\Delta_{s+1}=0$ then $s$ must be a member of any allowable flat coarsening subset for $\Delta$.  Now, let $s'$  be the smallest of the set of consecutive positive integers in $S$ that ends with $s$ (producing the $i^{th}$ part of the coarsening of $\Delta$ with respect to $S$).  Then
$$
\Delta_{s'}+\Delta_{s'+1}+
\cdots +\Delta_{s}
<\Delta_{s'}+\Delta_{s'+1}+
\cdots +\Delta_{s}+\Delta_{s+1}.
$$
Since $\Delta_{s'}+\Delta_{s'+1}+ \cdots +\Delta_{s}$ is $i^{th}$ part of $\Theta(\Delta,\hat{S})$ while $\Delta_{s'}+\Delta_{s'+1}+ \cdots +\Delta_{s}+\Delta_{s+1}$ is the $i^{th}$ part of $\Theta(\Delta,S)$, it is not possible for these two coarsenings to be the same.  Therefore there must be a unique allowable flat coarsening subset $S$ such that $\Theta(\Delta,S)=\Delta'$.
\end{proof}

\subsection{Ribbon expansions of immaculate functions}

The ribbon basis expands positively into the immaculate basis via standard immaculate tableaux~\cite{BBSSZ14}, but the expansion of the immaculate basis into the ribbon basis is only known for certain special cases.  In particular, Campbell provides the following formulas; one for the ribbon expansion of immaculate functions indexed by rectangles and one for immaculate functions indexed by products of two rectangles satisfying certain size conditions.  

\begin{thm}{\cite{Cam17}}
The ribbon expansion of an immaculate function indexed by a rectangle $m^n$ is given by $$\I_{m^n} = \sum_{\sigma \in S_n} \sign(\sigma) R_{(m-1+\sigma_1, m-2+\sigma_2, \hdots , m-n+\sigma_n)},$$ with the convention that $R_{\alpha}$ vanishes if $\alpha$ contains any nonpositive parts.
\end{thm}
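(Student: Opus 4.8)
The plan is to start from the determinantal expansion and then pass from the complete homogeneous basis to the ribbon basis by a single sign-reversing involution. Writing $\alpha(\sigma)=(m-1+\sigma_1,\,m-2+\sigma_2,\ldots,m-n+\sigma_n)$ for $\sigma\in S_n$, Theorem~\ref{T:NsymJacobiTrudi} (equivalently Theorem~\ref{T:main} specialized to the rectangle $m^n$, with Lemma~\ref{T:Mrj} identifying the $\Delta$-values) gives
$$\I_{m^n}=\sum_{\sigma\in S_n}\sign(\sigma)\,H_{\alpha(\sigma)},$$
under the standing conventions $H_a=0$ for $a<0$ and $H_0=1$. Since $m\ge 1$ and $\sigma_1\ge 1$, the leading part $m-1+\sigma_1$ is always strictly positive, so every $\alpha(\sigma)$ with no negative part has nonzero first entry and the flattening machinery of the previous subsection applies.

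First I would rewrite each surviving term in the ribbon basis. If $\alpha(\sigma)$ has a negative part then both $H_{\alpha(\sigma)}=0$ and (as a nonpositive part) $R_{\alpha(\sigma)}=0$, so such $\sigma$ contribute nothing to either side. Otherwise I expand $H_{\alpha(\sigma)}=H_{\fl(\alpha(\sigma))}=\sum_{\gamma\succeq\fl(\alpha(\sigma))}R_\gamma$, which by the uniqueness lemma for allowable flat coarsening subsets becomes $\sum_{S}R_{\Theta(\alpha(\sigma),S)}$, the sum running over all $S\subseteq[n-1]$ containing $Z(\alpha(\sigma))$, the set of indices one less than a zero-position of $\alpha(\sigma)$. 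The unique diagonal contribution, from $S=\emptyset$ when $\alpha(\sigma)$ is strictly positive, is exactly $R_{\alpha(\sigma)}$; conversely $R_{\alpha(\sigma)}$ is nonzero precisely when all parts of $\alpha(\sigma)$ are positive. Thus it remains to show that the non-diagonal contributions cancel, i.e.\ that
$$\sum_{\substack{\sigma\in S_n,\ \alpha(\sigma)\ \mathrm{part}\text{-wise}\ \ge 0\\ S\supseteq Z(\alpha(\sigma)),\ S\ne\emptyset}}\sign(\sigma)\,R_{\Theta(\alpha(\sigma),S)}=0.$$

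To prove this I would construct an involution on the index pairs $(\sigma,S)$. Given such a pair, set $i=\min S$ and let $\sigma'=\sigma\cdot(i,i+1)$ be $\sigma$ with its $i$-th and $(i+1)$-st one-line values swapped. The crucial algebraic point is that $\alpha(\sigma)_i+\alpha(\sigma)_{i+1}=2m-2i-1+\sigma_i+\sigma_{i+1}$ is symmetric in $\sigma_i,\sigma_{i+1}$; since $i\in S$ the positions $i,i+1$ lie in one block of $\Theta(\cdot,S)$, whose value is therefore unchanged, while all other parts of $\alpha(\sigma')$ agree with those of $\alpha(\sigma)$, so $R_{\Theta(\alpha(\sigma'),S)}=R_{\Theta(\alpha(\sigma),S)}$. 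Because $\sigma_i\ne\sigma_{i+1}$ we get $\sign(\sigma')=-\sign(\sigma)$, and $S$ is left fixed so $\min S$ and hence the pairing are preserved and the map is an involution with no fixed points; the paired terms cancel.

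The main obstacle is verifying that $(\sigma',S)$ is again a legitimate index pair, i.e.\ that $\alpha(\sigma')$ is part-wise nonnegative and $S\supseteq Z(\alpha(\sigma'))$. The swap sends $\alpha(\sigma)_i\mapsto\alpha(\sigma)_{i+1}+1>0$ and $\alpha(\sigma)_{i+1}\mapsto\alpha(\sigma)_i-1$, so the only way to create a negative part would be $\alpha(\sigma)_i=0$. The fact that rescues the argument is that $i=\min S$ can never be a zero-position: the leading part is always positive, so a zero at position $i$ would force $i-1\in Z(\alpha(\sigma))\subseteq S$, contradicting $i=\min S$. Hence $\alpha(\sigma)_i\ge 1$, the image stays nonnegative, and a short check of the positions $j\ne i,i+1$, $j=i$, and $j=i+1$ confirms $Z(\alpha(\sigma'))\subseteq S$. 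This completes the cancellation and yields $\I_{m^n}=\sum_{\sigma\in S_n}\sign(\sigma)\,R_{\alpha(\sigma)}$ with the stated convention that $R_{\alpha(\sigma)}$ vanishes when $\alpha(\sigma)$ has a nonpositive part.
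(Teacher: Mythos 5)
Your proof is correct, and it reaches the same sign-reversing involution as the paper does --- pairing $(\text{term},S)$ with the term obtained by the adjacent transposition $s_i$ at $i=\min S$, with the cancellation justified by $i\in S$ forcing positions $i$ and $i+1$ into one block of the coarsening --- but it gets there by a noticeably leaner route. The paper runs the whole argument through tunnel hook coverings: it invokes Lemma~\ref{lem:rect} to show that any hook of a rectangle with nonnegative $\Delta$ is either a lone purple cell or lies entirely inside the diagram, proves a uniqueness-of-flattening claim to set up its bookkeeping on pairs $(\Delta,S)$, and verifies that $f_i(\Delta)$ stays nonnegative by a geometric discussion of where red cells can appear. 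You instead index everything by $(\sigma,S)$ directly from the Jacobi--Trudi expansion, where the bookkeeping is automatically unambiguous (so the uniqueness-of-flattening step is not needed at all), and you replace the red-cell geometry by the one-line computation $\alpha(\sigma')_{i+1}=\alpha(\sigma)_i-1\ge 0$, with $\alpha(\sigma)_i\ge 1$ following from $i=\min S\supseteq Z(\alpha(\sigma))$ and the positivity of the leading part. What the paper's version buys is uniformity: the tunnel-hook formulation of the involution is exactly what carries over to Theorems~\ref{thm:diagonal} and~\ref{T:im2rib}, where the geometric control of red cells does real work. What your version buys is economy for the rectangle itself: the entire well-definedness check reduces to arithmetic on $m-i+\sigma_i$, and as a byproduct you avoid having to establish that the $H$-expansion is cancellation-free, which the stated identity does not actually require.
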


\begin{thm}{\cite{Cam17}}
The ribbon expansion of an immaculate function indexed by the product $\alpha=(a^b, c^d)$ of rectangles satisfying $b \le c$ and $b \le a$ is given by $$\I_{\alpha} = \sum_{\sigma \in S_{k}} \sign(\sigma) R_{(\alpha_1-1+\sigma_1, \alpha_2-2+\sigma_2, \hdots , \alpha_{k}-k+\sigma_{k})},$$ with the convention that $R_{\alpha}$ vanishes if $\alpha$ contains any nonpositive parts.
\end{thm}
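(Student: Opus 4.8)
The plan is to expand $\I_\alpha$ in the complete homogeneous basis via the noncommutative Jacobi--Trudi formula, convert every term to the ribbon basis, and then cancel all contributions indexed by proper coarsenings through a sign-reversing involution on permutations. Writing $c(\sigma)=(\alpha_1-1+\sigma_1,\ldots,\alpha_k-k+\sigma_k)$ for $\sigma\in S_k$ with $k=b+d$, Theorem~\ref{T:NsymJacobiTrudi} (or equivalently the permutation form of Theorem~\ref{T:main}) gives $\I_\alpha=\sum_{\sigma\in S_k}\sign(\sigma)H_{c(\sigma)}$, where $H_{c(\sigma)}=H_{c_1(\sigma)}\cdots H_{c_k(\sigma)}$ with $H_0=1$ and $H_a=0$ for $a<0$. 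A negative part annihilates the whole product, and by the stated convention $R_{c(\sigma)}=0$ in exactly the same case, so I restrict throughout to permutations with $c_i(\sigma)\ge0$ for all $i$. For such $\sigma$ one has $H_{c(\sigma)}=H_{\fl(c(\sigma))}$, and applying $H_\gamma=\sum_{\beta\succeq\gamma}R_\beta$ together with the uniqueness lemma for allowable flat coarsening subsets rewrites the contribution of $\sigma$ as $\sum_{S}R_{\Theta(c(\sigma),S)}$, the sum running over all allowable flat coarsening subsets $S$ of $c(\sigma)$.

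Next I would identify where the rectangle hypotheses enter and pin down the fixed points. Since $\alpha_i=a$ and $i\le b\le a$ for $i\le b$, we get $c_i(\sigma)=a-i+\sigma_i\ge a-b+1\ge1$ for every $\sigma$, so the first $b$ coordinates of $c(\sigma)$ are always strictly positive; in particular no flattening ever happens inside the first block. An allowable flat coarsening subset must contain $Z=\{i-1:c_i(\sigma)=0\}$, so $S=\emptyset$ is allowable precisely when all parts of $c(\sigma)$ are strictly positive, and then $\Theta(c(\sigma),\emptyset)=c(\sigma)$. Thus the terms $R_{c(\sigma)}$ appearing in the claimed formula correspond exactly to the pairs $(\sigma,\emptyset)$ with $c(\sigma)$ strictly positive, and it remains to show that every pair $(\sigma,S)$ with $S\neq\emptyset$ cancels.

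The cancellation is effected by the involution that, given $(\sigma,S)$ with $S\neq\emptyset$, selects the smallest $i\in S$ and transposes $\sigma_i$ and $\sigma_{i+1}$, leaving the subset $S$ unchanged. Minimality of $i$ forces $c_i(\sigma)\ge1$ (a zero at position $i$ would put $i-1\in Z\subseteq S$), so the transposition flips $\sign(\sigma)$ while preserving $c_i+c_{i+1}$ and hence the merged part of $\Theta(\cdot,S)$; since $\sigma_i\neq\sigma_{i+1}$ there are no fixed points with $S\neq\emptyset$, and the map is manifestly its own inverse because $S$ is unchanged and its least element is still $i$. The main obstacle, and the only place real work is required, is checking that the swapped sequence $c(\sigma')$ again has all parts nonnegative and that $S$ remains an allowable flat coarsening subset for it: the altered coordinate $c_{i+1}(\sigma')$ equals $c_i(\sigma)-1$ when $i$ and $i+1$ lie in the same block, and equals $c-(b+1)+\sigma_b$ when $i=b$ sits on the block boundary, and it is precisely the inequalities $b\le a$ (first block) and $b\le c$ (boundary) that keep this value $\ge0$; a resulting zero at position $i+1$ is harmless since $i\in S$ already, so $Z'\subseteq S$ still holds. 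Once this verification is in place, all pairs with $S\neq\emptyset$ cancel in sign-reversing pairs, leaving $\sum_{\sigma:\,c(\sigma)>0}\sign(\sigma)R_{c(\sigma)}$, which is the asserted expansion after the vanishing convention reabsorbs the omitted permutations.
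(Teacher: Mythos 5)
Your argument is correct, and its skeleton is the same one the paper uses: expand via the noncommutative Jacobi--Trudi determinant, pass from $H$ to $R$ by indexing each coarsening with its unique allowable flat coarsening subset, and cancel all pairs $(\sigma,S)$ with $S\neq\emptyset$ by transposing $\sigma_i$ and $\sigma_{i+1}$ for $i=\min S$. The difference is one of packaging. The paper never proves the two-rectangle statement directly; it is obtained as a special case (take $J=c$) of the more general Theorem~\ref{T:im2rib}, whose proof runs the same involution but verifies the two essential facts --- that the swapped sequence stays componentwise nonnegative and that $S$ remains an allowable flat coarsening subset --- geometrically, via tunnel hook coverings, the bijection with permutations, and Lemma~\ref{lem:rect} on red cells in rectangular tails. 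You verify those same facts by direct arithmetic on $c(\sigma)=(\alpha_1-1+\sigma_1,\ldots,\alpha_k-k+\sigma_k)$: in-block the altered coordinate is $c_i(\sigma)-1\ge 0$ by minimality of $i$ in $S$, and at the block boundary it is $c-(b+1)+\sigma_b\ge c-b\ge 0$, which is exactly where $b\le a$ and $b\le c$ enter. Your route is shorter and self-contained for this specific shape, at the cost of not yielding the paper's broader class of compositions (those with $\alpha_\ell\ge\ell$ for $\ell\le J$ and $\alpha_\ell=J$ thereafter); the paper's tunnel-hook formulation is what lets the same cancellation argument scale to that generality.
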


It is not true in general that 
\begin{equation}\label{E:fullsize}
\I_{\alpha} = \sum_{\sigma \in S_{k}} \sign(\sigma) R_{(\alpha_1-1+\sigma_1, \alpha_2-2+\sigma_2, \hdots , \alpha_{k}-k+\sigma_{k})}. 
\end{equation} 
One open question is to classify the compositions for which Equation~\eqref{E:fullsize} is true.  In this section, we provide a partial solution to this question by developing a large class of compositions for which this decomposition applies.

Let $\alpha$ be a composition.  We describe a function $f_i$ from a tunnel hook covering $\gamma$ of $\alpha$ to a tunnel hook covering $f_i(\gamma)$ of $\alpha$.  Recall that $s_i$ is the transposition $(i,i+1)$ which swaps $\sigma_i$ and $\sigma_{i+1}$ in the permutation $\sigma=\sigma_1 \cdots \sigma_k$ (using one-line notation).

\begin{defn}{\label{D:involution}}
Let $\mu$ be an arbitrary composition and $\gamma \in \SHF_{\mu}$.  Let $\sigma$ be the permutation associated to $\gamma$.  Set $f_i(\gamma)$ to be the tunnel hook covering of $\mu$ whose associated permutation is $s_i (\sigma)$.  If $\Delta$ is the sequence associated to the tunnel hook covering $\gamma$, then let $f_i(\Delta)$ denote the sequence associated to the tunnel hook covering $f_i(\gamma)$.  
\end{defn}

It is immediate that $f_i$ is an involution, since $s_i(s_i(\sigma))=\sigma$.  We now describe how applying $f_i$ to a tunnel hook covering impacts the tunnel hooks.  Note that Lemma~\ref{lem:notin} implies that if $(p_i,q_i)$ and $(p_{i+1},q_{i+1})$ are distinct terminal cells of a tunnel hook  covering, then $p_i-q_i \not= p_{i+1}-q_{i+1}$.  Therefore we only need to consider the cases $p_i-q_i < p_{i+1}-q_{i+1}$ and $p_i-q_i > p_{i+1}-q_{i+1}$. 

\begin{prop}{\label{prop:tailswap}}
Let $\mu$ be an arbitrary composition and $\gamma \in \SHF_{\mu}$.  Let $\h(1,\c_1), \\ \hdots, \h(k,\c_k)$ be the tunnel hooks in $\gamma$ with $\c_j=(p_j,q_j).$  Then the terminal cells of $f_i(\gamma)$ are 
$$
\begin{cases}
(\c_1,\ldots,\c_{i-1},\c_{i+1},(p_i+1,q_i+1),\c_{i+2},\ldots,\c_k)&\hbox{if $p_i-q_i<p_{i+1}-q_{i+1},$}\\
(\c_1,\ldots,\c_{i-1},(p_{i+1}-1,q_{i+1}-1),\c_i,\c_{i+2},\ldots,\c_k)&\hbox{if $p_i-q_i>p_{i+1}-q_{i+1}$}.\end{cases}$$
\end{prop}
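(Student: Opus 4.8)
The plan is to work directly with the explicit inverse bijection from the proof of Proposition~\ref{T:perm}, which recovers the terminal cells from the permutation. Recall that if $\sigma=(\sigma_1,\ldots,\sigma_k)$ is the permutation associated to a tunnel hook covering, then its terminal cells are $\c_r=(\sigma_r+m_r,\,1+m_r)$ where $m_r=\#\{j<r : \sigma_j>\sigma_r\}$, and conversely $\sigma_r=p_r-q_r+1$. In particular the hypothesis $p_i-q_i<p_{i+1}-q_{i+1}$ is exactly the condition $\sigma_i<\sigma_{i+1}$, while $p_i-q_i>p_{i+1}-q_{i+1}$ is exactly $\sigma_i>\sigma_{i+1}$. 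Since $f_i(\gamma)$ is by definition the covering whose permutation is $s_i(\sigma)$, it suffices to compute the recovered terminal cells of $s_i(\sigma)$ and compare them to those of $\sigma$.

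First I would dispense with the unchanged coordinates. For $r\le i-1$ the prefix $(\sigma_1,\ldots,\sigma_{r-1})$ is untouched by $s_i$, so $m_r$, and hence $\c_r$, are unchanged. For $r\ge i+2$ the multiset $\{\sigma_1,\ldots,\sigma_{i+1}\}$ is fixed (only two entries trade places) and $\sigma_r$ itself is fixed, so $m_r=\#\{j<r:\sigma_j>\sigma_r\}$ is again unchanged and $\c_r$ is unchanged. This leaves only positions $i$ and $i+1$ to analyze.

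For those two positions I would compute the counts directly, carefully tracking the indicator contributed by the pair $\{\sigma_i,\sigma_{i+1}\}$. Writing $a=\#\{j<i:\sigma_j>\sigma_i\}$ and $b=\#\{j<i:\sigma_j>\sigma_{i+1}\}$, in $\gamma$ one has $m_i=a$ and $m_{i+1}=b+[\sigma_i>\sigma_{i+1}]$, while in $f_i(\gamma)$ the new counts are $m_i'=b$ and $m_{i+1}'=a+[\sigma_{i+1}>\sigma_i]$. In the case $\sigma_i<\sigma_{i+1}$ this gives $m_{i+1}=b=m_i'$, so the new $i$-th terminal cell equals the old $(i+1)$-st, namely $\c_{i+1}$; and $m_{i+1}'=a+1=m_i+1$, so the new $(i+1)$-st cell is $(\sigma_i+m_i+1,\,2+m_i)=(p_i+1,q_i+1)$. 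Symmetrically, when $\sigma_i>\sigma_{i+1}$ one finds $m_i'=b=m_{i+1}-1$, giving new $i$-th cell $(p_{i+1}-1,q_{i+1}-1)$, and $m_{i+1}'=a=m_i$, giving new $(i+1)$-st cell $\c_i$. These match the two cases in the statement.

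The only real obstacle is the bookkeeping of the indicator $[\sigma_i>\sigma_{i+1}]$ inside the counts $m_i,m_{i+1}$ and the correct translation of the resulting $(\sigma,m)$-coordinates back to $(p,q)$-coordinates via $\sigma_r=p_r-q_r+1$; the $\pm1$ shifts in both coordinates arise precisely from this indicator. Once that translation is set up, each case is a one-line substitution. Since $f_i$ is already known to be an involution, one could instead verify only the case $\sigma_i<\sigma_{i+1}$ and deduce the other case by applying $f_i$ a second time, but the direct computation is short enough that I would simply carry out both.
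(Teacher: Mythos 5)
Your computation is correct, and it checks out in detail: with $a=\#\{j<i:\sigma_j>\sigma_i\}$ and $b=\#\{j<i:\sigma_j>\sigma_{i+1}\}$ the four counts $m_i=a$, $m_{i+1}=b+[\sigma_i>\sigma_{i+1}]$, $m_i'=b$, $m_{i+1}'=a+[\sigma_{i+1}>\sigma_i]$ are right, and substituting into $\c_r=(\sigma_r+m_r,1+m_r)$ yields exactly the two cases of the statement. Your route is genuinely different from the paper's. The paper argues geometrically: in the case $\sigma_i<\sigma_{i+1}$ it observes that choosing $(p_{i+1},q_{i+1})$ as the terminal cell of the $i$-th hook forces that hook to cover $(p_i,q_i)$, and then invokes Lemma~\ref{T:Losecells}(C) to conclude that the tunnel cell on diagonal $\mathcal{L}_{\sigma_i}$ migrates to $(p_i+1,q_i+1)$; the reverse case is then dispatched by the involution property of $f_i$ rather than recomputed. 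You instead work entirely through the explicit inverse of the bijection in Proposition~\ref{T:perm}, reducing everything to inversion-counting arithmetic; this buys a uniform, purely mechanical treatment of both cases and of the unchanged positions $r\le i-1$ and $r\ge i+2$ (which the paper leaves implicit), at the cost of leaning on the closed-form inverse rather than on the diagrammatic dynamics of tunnel cells. Both are complete proofs; if you write yours up, state explicitly that the formula $\c_r=(\sigma_r+m_r,1+m_r)$ is the inverse established in the proof of Proposition~\ref{T:perm}, since that is the one external fact your argument needs.
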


\begin{proof}
Assume $(\c_1, \hdots , \c_{i-1}, (p_i,q_i),(p_{i+1},q_{i+1}), \c_{i+2}, \hdots , \c_k)$ are the terminal cells for $\gamma$ with associated permutation $\sigma$.  Recall Proposition~\ref{T:perm} implies that $\sigma_i=p_i-q_i+1$.  First assume $p_i-q_i < p_{i+1}-q_{i+1}$ (which also means $\sigma_i<\sigma_{i+1}$).  Then applying $s_i$ to the associated permutation $\sigma$ corresponds to selecting terminal cell $(p_{i+1},q_{i+1})$ for the $i^{th}$ tunnel hook.  In this case the $i^{th}$ tunnel hook now covers cell $(p_i,q_i)$ and therefore by Lemma~\ref{T:Losecells}(C), $(p_i+1, q_i+1)$ becomes the terminal cell for the tunnel hook corresponding to $(f_i(\sigma))_{i+1} = p_i-q_i+1$ (writing $f_i(\sigma)$ to denote the permutation associated to $f_i(\gamma)$).

If $p_i-q_i > p_{i+1}-q_{i+1}$, then $f_i(\gamma)$ has permutation $f_i(\sigma)$ with $(f_i(\sigma))_i < (f_i(\sigma))_{i+1}$.  Therefore applying the involution $f_i$ to $\gamma$ effectively ``undoes" the operation in the first situation.  So the terminal cell for the $i^{th}$ tunnel hook is $(p_{i+1}-1,q_{i+1}-1)$ and the terminal cell for the $(i+1)^{th}$ tunnel hook is $(p_i,q_i)$, as desired.
\end{proof}


\begin{lem}{\label{lem:signs}}
$$- \prod_{\h(r,\c_r)\in \gamma} \ \sign(\h(r,\c_r))= \prod_{\h(r,\c_r)\in f_i(\gamma)} \ \sign(\h(r,\c_r))$$
\end{lem}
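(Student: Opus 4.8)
The plan is to reduce the statement to a fact already contained in the proof of Theorem~\ref{T:main}: the signed product of the tunnel hooks of a covering equals the sign of its associated permutation. Concretely, if $\gamma \in \SHF_{\mu}$ has associated permutation $\sigma \in S_k$, then
$$\prod_{\h(r,\c_r)\in \gamma} \sign(\h(r,\c_r)) = \sign(\sigma).$$
By Definition~\ref{D:involution} the covering $f_i(\gamma)$ has associated permutation $s_i(\sigma)$, and since $s_i$ is a transposition, $\sign(s_i\sigma) = -\sign(\sigma)$. Combining these yields
$$\prod_{\h(r,\c_r)\in f_i(\gamma)} \sign(\h(r,\c_r)) = \sign(s_i\sigma) = -\sign(\sigma) = -\prod_{\h(r,\c_r)\in \gamma} \sign(\h(r,\c_r)),$$
which is exactly the claim. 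This is the shortest route and requires no new computation beyond invoking the two cited facts.

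A more self-contained route, and the one that actually uses Proposition~\ref{prop:tailswap}, is to compute the change in the signed product directly from the terminal cells. Recall that $\sign(\h(r,\c_r)) = (-1)^{p_r - r}$ when $\c_r = (p_r,q_r)$, since the $r$-th hook begins in row $r$; hence $\prod_r \sign(\h(r,\c_r)) = (-1)^{\sum_r (p_r - r)}$. For $j \notin \{i,i+1\}$ the terminal cell $\c_j$ and the starting row $j$ are both unchanged under $f_i$, so those factors agree for $\gamma$ and $f_i(\gamma)$; only positions $i$ and $i+1$ can contribute a change. When $p_i - q_i < p_{i+1} - q_{i+1}$, Proposition~\ref{prop:tailswap} replaces the pair of terminal cells $(p_i,q_i),(p_{i+1},q_{i+1})$ (in hooks starting in rows $i,i+1$) by $(p_{i+1},q_{i+1}),(p_i+1,q_i+1)$, so the exponent contribution changes from $(p_i - i)+(p_{i+1}-(i+1))$ to $(p_{i+1}-i)+((p_i+1)-(i+1))$, a net increase of exactly $1$. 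The case $p_i - q_i > p_{i+1} - q_{i+1}$ is symmetric: the pair becomes $(p_{i+1}-1,q_{i+1}-1),(p_i,q_i)$ and the exponent changes by $-1$. In either case the parity of $\sum_r (p_r - r)$ flips, so the signed product is negated.

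The only point requiring care is the bookkeeping that isolates the change to positions $i$ and $i+1$: one must confirm that each hook indexed by $j \notin \{i,i+1\}$ is genuinely unaffected (same terminal cell, same starting row $j$), which is immediate from the explicit list in Proposition~\ref{prop:tailswap}, and that the shifted terminal cell moves its row coordinate by $\pm 1$ while its hook's starting index remains fixed, which is precisely what produces the parity flip. I expect no real obstacle beyond this: the conceptual content, namely that $f_i$ is realized by a single transposition of the underlying permutation, has already been packaged into Definition~\ref{D:involution} and Proposition~\ref{prop:tailswap}, so the argument amounts to either citing the sign identity from the proof of Theorem~\ref{T:main} or carrying out the short two-case exponent computation above.
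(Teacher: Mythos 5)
Your first route is exactly the paper's proof: the paper observes that each tunnel hook covering corresponds to a term of the permutation expansion of $\ndet(M_\mu)$, that the product of the hook signs equals $\sign(\sigma)$ (as established via Lemma~\ref{T:signs} in the proof of Theorem~\ref{T:main}), and that $f_i$ replaces $\sigma$ by $s_i(\sigma)$, flipping the sign. Your second, self-contained route via the exponent $\sum_r (p_r - r)$ and the two cases of Proposition~\ref{prop:tailswap} is also correct (the net change of $\pm 1$ in the exponent checks out) and is a nice direct verification that does not lean on the determinant identity, but it is not needed given that the first route already closes the argument.
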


\begin{proof}
The non-commutative determinant of $M_{\mu}$ can be computed according to the formula \begin{equation}{\label{eq:detperm}}
\ndet(M_{\mu})=\sum_{\sigma \in S_k} \sign(\sigma) (M_{\mu})_{1,\sigma_1}(M_{\mu})_{2,\sigma_2} \cdots (M_{\mu})_{k, \sigma_k}.
\end{equation}
Each tunnel hook covering corresponds to one of the terms in Equation~(\ref{eq:detperm}).  The map $f_i$ replaces the tunnel hook corresponding to $\sigma$ to the tunnel hook corresponding to $s_i(\sigma)$, which multiplies the sign by $-1$.
\end{proof}

\begin{theorem}{\label{thm:diagonal}}
Let $\alpha=(\alpha_1, \hdots , \alpha_k)$ be any composition such that $\alpha_i \ge i$ for $1 \le i \le k$. Then $$\I_{\alpha} = \sum_{ \sigma \in S_k} \sign(\sigma) R_{(\alpha_1-1+\sigma_1, \alpha_2-2+\sigma_2, \hdots , \alpha_k-k+\sigma_k)}.$$
\label{T:alpha_i_big}
\end{theorem}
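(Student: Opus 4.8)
The plan is to start from the complete-homogeneous expansion of $\I_\alpha$ already furnished by Theorem~\ref{T:main} and convert it term-by-term into the ribbon basis, then cancel all but the ``diagonal'' terms with a sign-reversing involution built from the maps $f_i$. By Proposition~\ref{T:perm} the tunnel hook coverings $\gamma \in \SHF_\alpha$ are indexed by permutations $\sigma \in S_k$, and by Lemma~\ref{T:Mrj} the $r$-th tunnel hook of the covering associated to $\sigma$ satisfies $\Delta(\h(r,\c_r)) = \alpha_r - r + \sigma_r$, while the product of its signs is $\sign(\sigma)$. Writing $\beta(\sigma) := (\alpha_1 - 1 + \sigma_1, \ldots, \alpha_k - k + \sigma_k)$, Theorem~\ref{T:main} then reads $\I_\alpha = \sum_{\sigma \in S_k} \sign(\sigma)\, H_{\beta(\sigma)}$. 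The hypothesis $\alpha_i \ge i$ is exactly what I need here: it forces every part $\alpha_r - r + \sigma_r \ge \sigma_r \ge 1$ to be strictly positive, so each $H_{\beta(\sigma)}$ is an honest product of positive-degree generators (no $H_0$'s and no vanishing $H_{<0}$'s intervene), and, crucially, every coarsening of $\beta(\sigma)$ remains a strong composition.

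Next I would apply the expansion $H_{\beta(\sigma)} = \sum_{S \subseteq [k-1]} R_{\Theta(\beta(\sigma), S)}$, i.e.\ the relation $H_\alpha = \sum_{\beta \succeq \alpha} R_\beta$ from the start of this section, with each coarsening recorded by its merge-set $S$. This turns the expansion into the double sum $\I_\alpha = \sum_{\sigma \in S_k} \sum_{S \subseteq [k-1]} \sign(\sigma)\, R_{\Theta(\beta(\sigma), S)}$. The $S = \emptyset$ terms contribute exactly $\sum_\sigma \sign(\sigma) R_{\beta(\sigma)}$, which is the desired right-hand side, so it remains to show that all terms with $S \neq \emptyset$ cancel.

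For the cancellation I would use the involution $(\gamma, S) \mapsto (f_{\min S}(\gamma), S)$ on the set of pairs with $S \neq \emptyset$, where $f_{\min S}$ is the map of Definition~\ref{D:involution} applied at position $\min S$. Two facts make this work. First, $\sign(f_{\min S}(\gamma)) = -\sign(\gamma)$ by Lemma~\ref{lem:signs}, so the two paired terms carry opposite signs. Second, writing $\sigma$ for the permutation of $\gamma$ and $i = \min S$, the elementary identity $\beta(\sigma)_i + \beta(\sigma)_{i+1} = \beta(s_i\sigma)_i + \beta(s_i\sigma)_{i+1}$ holds, since both sides equal $\alpha_i + \alpha_{i+1} - (2i+1) + \sigma_i + \sigma_{i+1}$. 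Because $\beta(\sigma)$ and $\beta(s_i\sigma)$ agree outside positions $i$ and $i+1$, and because $i \in S$ forces those two positions into a single block of the coarsening, this identity gives $\Theta(\beta(\sigma),S) = \Theta(\beta(s_i\sigma),S)$: the two altered entries lie inside one block whose total is preserved, while all other blocks are untouched. Hence the paired terms produce the identical ribbon with opposite signs and cancel. Since the defining index $\min S$ is left fixed, the map squares to the identity and has no fixed points, so the whole $S \neq \emptyset$ portion of the double sum vanishes, leaving precisely $\sum_\sigma \sign(\sigma) R_{\beta(\sigma)}$.

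The main obstacle, and the only place demanding care, is checking that this really is a fixed-point-free, sign-reversing involution that reproduces the same ribbon: one must confirm that selecting the canonical merge position $i = \min S$ keeps $S$ (hence $\min S$) fixed so the map is an involution, and that when $i \in S$ the block containing positions $i$ and $i+1$ absorbs exactly the two altered entries so the coarsening is literally unchanged — the Proposition~\ref{prop:tailswap} description of how $f_i$ relocates terminal cells is the geometric shadow of the summation identity just used. A secondary point worth stating explicitly is that the positivity guaranteed by $\alpha_i \ge i$ is what licenses the clean $H$-to-$R$ expansion and ensures that no $R_\beta$ in sight is annihilated by the nonpositive-part convention; this is precisely the structural feature that fails in general and explains why Equation~\eqref{E:fullsize} need not hold without the hypothesis.
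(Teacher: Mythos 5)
Your proposal is correct and follows essentially the same route as the paper's proof: expand via Theorem~\ref{T:main} into the $H$-basis, use $H_{\beta}=\sum_{\beta'\succeq\beta}R_{\beta'}$, and cancel all non-trivially-coarsened terms with the sign-reversing involution $(\Delta,S)\mapsto(f_{\min S}(\Delta),S)$, relying on $\Delta_i+\Delta_{i+1}=(f_i(\Delta))_i+(f_i(\Delta))_{i+1}$ and Lemma~\ref{lem:signs}. Your derivation of positivity of the parts directly from $\alpha_r-r+\sigma_r\ge\sigma_r\ge 1$ (via Lemma~\ref{T:Mrj}) is a slightly more algebraic phrasing of the paper's geometric observation that every tunnel hook must originate inside the diagram, but the content is the same.
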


\begin{proof}
Recall the complete homogeneous noncommutative symmetric functions expand into the ribbon basis according to the following formula.  Let $\alpha$ be a strong composition.  Then

\begin{equation}{\label{eq:HtoR}}
H_{\alpha} = \sum_{\beta \succeq \alpha} R_{\beta}, \end{equation} where $\succeq$ is the refinement ordering on compositions. 

Set $\Delta_i=\Delta(\h(i,\tau_i))$
and $\Delta (\gamma)=(\Delta_1,\Delta_2,\cdots,\Delta_k).$ 
 Therefore the homogeneous expansion of the immaculates becomes  \begin{align*}
\I_{\alpha} & = \sum_{\gamma\in \SHF_{\mu}} \prod_{r=1
}^k \ \sign(\h(r,\c_r))\ H_{\Delta_r} \\
& = \sum_{\gamma\in \SHF_{\mu}} \Biggl( \prod_{r=1
}^k \ \sign(\h(r,\c_r)) \Biggr) H_{\Delta(\gamma)} \\
& = \sum_{\gamma\in \SHF_{\mu}} \Biggl( \prod_{r=1
}^k \ \sign(\h(r,\c_r)) \Biggr) \sum_{\beta \succeq \fl({\Delta(\gamma)})} R_{\beta}, \; \; \; \; \textrm{ by Equation~(\ref{eq:HtoR})}.
\end{align*}
Note that since $\alpha_r \ge r$ for all $1 \le r \le k$, it is not possible for all the cells in row $r$ to be covered by tunnel hooks originating in rows $1$ through $r-1$, so every tunnel hook starts within the \GBPR~diagram $D^{(0)}_{\alpha}$.  Therefore $\Delta(\h(r, \c_r)) > 0$ for all $1 \le r \le k$ for every tunnel hook covering $\gamma$.  Every indexing composition $\beta$ in the ribbon expansion can be thought of as a pair $(\Delta, S)$ where $\Delta=(\Delta_1, \Delta_2, \hdots , \Delta_k)$ is the indexing composition corresponding to a tunnel hook covering $\gamma$ and $S$ is the unique allowable flat coarsening subset such that $\beta=\Theta(\Delta,S)$.

We describe a sign-reversing involution on the pairs $(\Delta,S)$ with $S \not=\emptyset$, proving  that every term with less than $k$ parts cancels out.  For an arbitrary pair $(\Delta,S)$ corresponding to a term $R_{\Theta(\Delta,S)}$, let $i$ be the smallest element in $S$ and set $g(\Delta,S)=(f_i(\Delta),S)$.  The map $f_i$ is a sign-reversing involution (by Definition~\ref{D:involution} and Lemma~\ref{lem:signs}) and any coarsening set $S$ is an allowable flat coarsening set for any indexing composition of a tunnel hook covering of $\alpha$ since all parts are nonzero).  Therefore the pair $(f_i(\Delta),S)$ corresponds to the term $-R_{(\Theta(f_i(\Delta),S)}=-R_{\Theta(\Delta,S)}$ since $\Delta_i+\Delta_{i+1} =(f_i(\Delta))_{i+1}+  (f_i(\Delta))_i $.  Therefore the only terms appearing in the ribbon expansion are exactly the $R_{\beta}$ such that the length of the strong composition $\beta$ equals $k$, completing the proof.
\end{proof}

We next use \allowhooks~to provide a combinatorial proof of Campbell's ribbon expansion of immaculate functions~\cite{Cam17}.  To do this, we make use of the following lemma.

\begin{lem}{\label{lem:rect}}
Let $\gamma$ be a tunnel hook covering of the diagram of a composition $\alpha=(\alpha_1, \hdots , \alpha_k)$.  Let $j$ be the smallest value such that $\alpha_i=\alpha_k$ for all $i$ such that  $j \le i \le k$.  If $\h(r,\c_r)$ is a tunnel hook of $\gamma$ with $j \le r \le k$ and such that $\h(r,\c_r)$ includes at least one cell outside of the \GBPR~diagram $D_{\alpha}$, then either $\h(r,\c_r)$ consists of a single purple cell or there exists a tunnel hook $\h(s, \c_{s})$ in $\gamma$ with $r \le s \le k$ such that $\Delta(\h(s,\c_{s})) < 0$.
\end{lem}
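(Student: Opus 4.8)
The plan is to pass from the geometry of $\gamma$ to the permutation $\sigma\in S_k$ attached to it by Proposition~\ref{T:perm}, rewrite both the hypothesis and the two alternatives as inequalities among the entries $\sigma_i$, and then close with a short extremal argument applied to the minimum of a tail of $\sigma$. Throughout I would write $c=\alpha_k$, so that $\alpha_i=c$ for every $j\le i\le k$, and let $\sigma_i$ record the diagonal $\mathcal{L}_{\sigma_i}$ containing the terminal cell $\c_i=(p_i,q_i)$ of the $i$-th tunnel hook. Since $\h(r,\c_r)$ occupies rows $r,\dots,p_r$, and $r\ge j$ forces $[r,p_r]\subseteq[j,k]$, every row of the hook has right boundary at column $c$; this uniformity is what makes the whole reduction to $\sigma$ possible.

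The two formulas I would record first are the engine of the proof. From Lemma~\ref{T:Mrj} together with the bijection of Proposition~\ref{T:perm}, the value attached to a hook is $\Delta(\h(r,\c_r))=\alpha_r-r+\sigma_r$, which in the rectangle reads $\Delta(\h(r,\c_r))=c-r+\sigma_r$. From Lemma~\ref{T:countnuj}, applied with superscript $r-1$ and row $r$, the grey count $\nu_r^{(r-1)}$ equals the number of entries of $\{\sigma_1,\dots,\sigma_{r-1}\}$ exceeding $\beta_{\sigma,r-1,1}=\min\{\sigma_r,\dots,\sigma_k\}$; counting with the fact that $\sigma$ is a permutation of $[k]$ collapses this to the clean identity $\nu_r^{(r-1)}=r-\min\{\sigma_r,\dots,\sigma_k\}$.

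Next I would translate the hypothesis ``$\h(r,\c_r)$ includes a cell outside $D_\alpha$'' into a statement about $\nu_r^{(r-1)}$. Because the boundary cells in rows above $r$ shift weakly leftward (as $(\nu_r^{(r-1)},\dots,\nu_k^{(r-1)})$ is a partition), the rightmost cell of the hook lies in its originating row $r$, at some column $E_r$; hence the hook pokes outside exactly when $E_r>c$. Reading off row $r$ in the \GBPR~diagram, where exactly one of $b_r,c_r$ is nonzero and $b_r-c_r=\alpha_r-\nu_r^{(r-1)}$, one checks $E_r=c$ when $\nu_r^{(r-1)}<c$, and $E_r>c$ precisely when $\nu_r^{(r-1)}\ge c$. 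Combining with the identity above, the hypothesis becomes the single inequality $\min\{\sigma_r,\dots,\sigma_k\}\le r-c$.

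With this in hand the conclusion follows by choosing $s^\ast\in[r,k]$ with $\sigma_{s^\ast}=\min\{\sigma_r,\dots,\sigma_k\}\le r-c$. If $s^\ast>r$, then $\Delta(\h(s^\ast,\c_{s^\ast}))=c-s^\ast+\sigma_{s^\ast}\le r-s^\ast<0$, so $s=s^\ast$ works; if $s^\ast=r$ and $\sigma_r<r-c$, then $\Delta(\h(r,\c_r))<0$ and $s=r$ works. The only remaining possibility is $s^\ast=r$ with $\sigma_r=r-c$: here $\nu_r^{(r-1)}=r-\sigma_r=c$, so $\spin(r)=0$ and $\Delta(\h(r,\c_r))=0$, which via \eqref{E:Delta} forces $\taxi(\h(r,\c_r))=0$, i.e. $p_r=r$ and $q_r=\nu_r^{(r-1)}+1=c+1$; since $\nu_r^{(r-1)}=\alpha_r$ leaves row $r$ with no blue or red cells, the terminal cell $(r,c+1)$ is purple and $\h(r,\c_r)$ is a single purple cell, giving the first alternative. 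I expect the main obstacle to be the geometric-to-combinatorial translation of the third paragraph, namely verifying that the rightmost cell sits in the originating row and that $E_r>c\iff\nu_r^{(r-1)}\ge\alpha_r$; once those two facts and the identity $\nu_r^{(r-1)}=r-\min\{\sigma_r,\dots,\sigma_k\}$ are secured, the final case analysis is essentially forced.
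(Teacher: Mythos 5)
Your proof is correct, but it takes a genuinely different route from the paper's. The paper argues directly on the diagram: a multi-cell \allowhook~that begins outside $D_{\alpha}$ must cover a purple or red cell, which plants a red cell in row $r+1$ when $D^{(r)}_{\alpha/\nu^{(r)}}$ is built; any later hook that terminates in its own starting row while that row contains a red cell has $\spin<0$ and $\taxi=0$, hence negative $\Delta$, and otherwise the red cell propagates upward, so within the remaining rows some hook must go negative. You instead push everything through the permutation model: $\Delta(\h(s,\c_s))=\alpha_s-s+\sigma_s$ from Lemma~\ref{T:Mrj} together with Proposition~\ref{T:perm}, the identity $\nu_r^{(r-1)}=r-\min\{\sigma_r,\dots,\sigma_k\}$ (which is exactly the $\zeta^{(m,\sigma)}_{1}$ computation already carried out in the proof of Proposition~\ref{P:minordet}), and the equivalence that $\h(r,\c_r)$ leaves $D_{\alpha}$ iff $\nu_r^{(r-1)}\ge c$. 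That equivalence -- the step you flag as the main obstacle -- does go through: since $\alpha_i=c$ on all of $[j,k]$ and the hook occupies only rows in $[r,p_r]\subseteq[j,k]$, the boundary cells in rows above $r$ sit in columns at most $\nu_r^{(r-1)}+1$, the rightmost boundary cell lies in row $r$ itself, and it sits strictly east of column $c$ exactly when $\nu_r^{(r-1)}\ge c$. The suffix-minimum argument then forces one of the required outcomes in every case, and your boundary case $\sigma_r=r-c$ correctly recovers the single-purple-cell alternative via $\Delta=\spin=\taxi=0$. What your approach buys is an explicit witness and a quantitative bound: the negative hook is located at the position $s^{\ast}$ of $\min\{\sigma_r,\dots,\sigma_k\}$, with $\Delta(\h(s^{\ast},\c_{s^{\ast}}))\le r-s^{\ast}$. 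What the paper's approach buys is independence from the permutation machinery (it never invokes Lemma~\ref{T:countnuj}) and a red-cell propagation mechanism that is reused almost verbatim in the proofs of Theorems~\ref{T:Rectangle} and~\ref{T:im2rib}.
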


\begin{proof}
Assume that $\h(r, \c_r)$ is a tunnel hook with $j \le r \le k$ containing at least one cell outside of the \GBPR~diagram $D_{\alpha}$.  If $\h(r,\c_r)$ consists of just one cell then we are done.  Therefore we may assume that $\h(r,\c_r)$ contains more than one cell.  We will show that there exists a tunnel hook $\h(s,\c_s)$ in $\gamma$ with $r \le s \le k$ such that $\Delta(\h(s,\c_s)) < 0$.  First, recall that tunnel hooks move in a northwestern (up and to the left) direction.  Since $\alpha_i=\alpha_k$ for all $j \le i \le k$, once a tunnel hook is inside the diagram $D_{\alpha}$ in row $j$ or higher, it cannot ever leave the diagram.  This means that in order to contain a cell outside of the diagram, the tunnel hook $\h(r, \c_r)$ must begin outside of the diagram.  Let $D^{(t)}_{\alpha/\nu^{(t)}} \setminus D_{\alpha}$ be the set of all cells introduced in rows $t$ and above during the construction of the first $t$ tunnel hooks of the tunnel hook covering.  Then, for all $t$ such that $1 \le t \le k$, all cells in $D^{(t)}_{\alpha/\nu^{(t)}} \setminus D_{\alpha}$ must either be purple or red.  If any cells in row $r$ are red and $\h(r, \c_r)$ terminates in row $r$, then $\Delta(\h(r,\c_r)) < 0$ and we are done.  Therefore assume that either no cells in row $r$ are red or $\h(r,\c_r)$ terminates in a row higher than row $r$.  If no cells in row $r$ are red, $\h(r,\c_r)$ must contain exactly one purple cell in row $r$, immediately to the right of a cell contained in $\alpha$. 
 This means $\h(r,\c_r)$ must terminate in a higher row since otherwise $\h(r, \c_r)$ would consist of only one cell.  Therefore we may now assume $\h(r,\c_r)$ terminates in a row higher than row $r$.  Then $\h(r,\c_r)$ includes at least one purple cell in row $r+1$ (the cell immediately above the initial cell for $\h(r,\c_r)$), which creates a red cell in row $r+1$ in Step 2(c) of Procedure~\ref{A:THC}. 

 We have now shown that any tunnel hook $\h(r,\c_i)$ such that $j \le r \le k$ originating outside the diagram of $\alpha$ consisting of more than one cell must result in at least one red cell in row $r+1$.  Now the tunnel hook $\h(r+1,\c_{r+1})$ originating in row $r+1$ includes the red cell in row $r+1$ and therefore can only be nonnegative if it introduces a red cell in row $r+2$.  Repeated iteration of this argument shows that once a red cell is introduced, there exists a positive integer $r \le s \le k$ such that $\Delta(\h(s,\c_s))<0$. 
\end{proof}

We are now ready to use tunnel hook coverings to prove Campbell's Rectangle Theorem.

\begin{theorem}{\label{T:Rectangle}}
Let $\alpha=(m^k)$ be a rectangle with $k$ rows each of length $m$.  Then 
$$\I_{(m^k)}=\sum_{\sigma \in S_n} \sign(\sigma) R_{(m-1+\sigma_1,m-2+\sigma_2, \hdots , m-k+\sigma_k)},$$ with the convention that $R_{\beta}$ vanishes if $\beta$ contains any nonpositive parts.
\end{theorem}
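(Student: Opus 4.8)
The plan is to start from the complete-homogeneous expansion of Theorem~\ref{T:main}, push each term into the ribbon basis via Equation~\eqref{eq:HtoR}, and then kill every ribbon that is not full length by a sign-reversing involution built from the maps $f_i$ of Definition~\ref{D:involution}. Recording each tunnel hook covering $\gamma\in\SHF_{(m^k)}$ as a permutation $\sigma\in S_k$ through Proposition~\ref{T:perm}, Lemma~\ref{T:Mrj} gives $\Delta(\h(r,\c_r))=m-r+\sigma_r$, so
$$\I_{(m^k)}=\sum_{\gamma\in\SHF_{(m^k)}}\Big(\prod_{r=1}^{k}\sign(\h(r,\c_r))\Big)H_{\Delta(\gamma)}.$$
Because $H_a=0$ for $a<0$ and $H_0=1$, only coverings with no negative $\Delta_r$ survive, and for these I would expand $H_{\Delta(\gamma)}=H_{\fl(\Delta(\gamma))}=\sum_{S}R_{\Theta(\Delta(\gamma),S)}$ over allowable flat coarsening subsets $S$; since $\Delta_1=m-1+\sigma_1\ge 1$, the flat-coarsening machinery applies.

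Before attacking the cancellation I would record the structural role of Lemma~\ref{lem:rect} (with $j=1$, as every row of $(m^k)$ has length $m$): in a surviving covering any tunnel hook that leaves $D_{(m^k)}$ must be a single purple cell, for which $\taxi=0$ and $\spin=0$ and hence $\Delta=0$. Consequently the only coverings that can contribute a full-length ($S=\emptyset$) ribbon are those sitting entirely inside $D_{(m^k)}$ with every $\Delta_r>0$, and for such a covering $\Theta(\Delta(\gamma),\emptyset)=\Delta(\gamma)$.

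The heart of the proof is the sign-reversing involution $g(\gamma,S)=(f_{\min S}(\gamma),S)$ on the pairs $(\gamma,S)$ with $\Delta(\gamma)$ nonnegative and $S\neq\emptyset$ allowable. Setting $i=\min S$, allowability forces $\Delta_1,\dots,\Delta_i$ to be strictly positive (a zero in a position $2\le r\le i$ would require $r-1\in S$, impossible since $r-1<\min S$). By Proposition~\ref{prop:tailswap} the swap $f_i$ changes only coordinates $i$ and $i+1$, and I expect the key identities $(f_i(\Delta))_i=\Delta_{i+1}+1$ and $(f_i(\Delta))_{i+1}=\Delta_i-1$ to hold; since $\Delta_{i+1}\ge 0$ and $\Delta_i\ge 1$, the image again has no negative entry, $S$ stays allowable for it (the only possible new zero lands in position $i+1$, which is covered by $i\in S$), and $\min S$ is unchanged, so $g$ really is a fixed-point-free involution on this index set. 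As $f_i$ preserves the block sum $\Delta_i+\Delta_{i+1}$, it preserves the coarsened index $\Theta(\Delta(\gamma),S)$, while Lemma~\ref{lem:signs} flips the sign; hence all $S\neq\emptyset$ terms cancel in pairs.

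After this cancellation only the $S=\emptyset$ terms remain, namely $\sum_{\gamma:\,\Delta(\gamma)>0}\big(\prod_r\sign(\h(r,\c_r))\big)R_{\Delta(\gamma)}$. Translating back through Proposition~\ref{T:perm} turns $\prod_r\sign(\h(r,\c_r))$ into $\sign(\sigma)$ and $\Delta(\gamma)$ into $(m-1+\sigma_1,\dots,m-k+\sigma_k)$, giving $\sum_{\sigma}\sign(\sigma)R_{(m-1+\sigma_1,\dots,m-k+\sigma_k)}$ over those $\sigma$ whose parts are all positive, which under the stated convention ($R_\beta=0$ when $\beta$ has a nonpositive part) is exactly the claimed sum over all of $S_k$. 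The main obstacle, and the place where the case $m<k$ genuinely departs from Theorem~\ref{thm:diagonal}, is the well-definedness of $g$: a careless swap could push an entry negative and send a contributing covering to a noncontributing one, and the resolution is precisely the choice $i=\min S$ together with the two identities above, which force both swapped entries to stay nonnegative so that the involution never escapes the set of contributing coverings.
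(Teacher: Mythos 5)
Your proof is correct and shares the paper's overall architecture: expand $\I_{(m^k)}$ into the $H$ basis via Theorem~\ref{T:main}, convert each surviving term to ribbons indexed by allowable flat coarsening subsets, and cancel every ribbon of length less than $k$ with the sign-reversing involution $(\Delta,S)\mapsto (f_{\min S}(\Delta),S)$. Where you genuinely depart from the paper is in the justification of the one delicate point, namely that $f_i$ (with $i=\min S$) keeps $\Delta$ nonnegative and keeps $S$ allowable. The paper argues geometrically: it invokes Lemma~\ref{lem:rect} with $j=1$ to show that every hook of a nonnegative covering is either a lone purple cell or lies wholly inside $D_{(m^k)}$, then tracks blue and red cells to obtain $(f_i(\Delta))_i>0$ and $(f_i(\Delta))_{i+1}\ge 0$, and it separately proves that distinct nonnegative coverings have distinct flattenings. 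You instead read off from Lemma~\ref{T:Mrj} (using $\alpha_i=\alpha_{i+1}=m$) the exact identities $(f_i(\Delta))_i=\Delta_{i+1}+1$ and $(f_i(\Delta))_{i+1}=\Delta_i-1$; these do hold, and combined with your observation that minimality of $i$ in $S$ forces $\Delta_i\ge 1$, they give nonnegativity, preservation of $\Theta(\Delta,S)$, and preservation of allowability in one stroke. Your route is shorter and makes visible exactly where the rectangle hypothesis enters: for a general composition the shift would be $1+\alpha_i-\alpha_{i+1}$, which can be negative. You also correctly drop the paper's uniqueness-of-flattenings claim, which is not needed since the involution acts on pairs $(\gamma,S)$ rather than on ribbon subscripts, so collisions among subscripts coming from different coverings are harmless. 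What the paper's geometric formulation buys is reusability: essentially the same Lemma~\ref{lem:rect} argument is deployed again for the mixed shapes of Theorem~\ref{T:im2rib}.
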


\begin{proof}
Let $\alpha=(m^k)$ be a rectangle with $k$ rows each of length $m$.  Letting $j=1$ in Lemma~\ref{lem:rect} implies that if $\gamma$ is a tunnel hook covering of $(m^k)$ such that $\Delta(\h(r, \c_r)) \ge 0$ for all $1 \le r \le k$, then every tunnel hook $\h(r,\c_r)$ in $\gamma$ either consists of a single purple cell outside the diagram (so that $\Delta(\h(r,\c_r))=0$) or is contained entirely within the rectangle $(m^k)$ (so that $\Delta(\h(r,\c_r)) >0$).  Since $H_0=1$, any occurrence of $H_0$ will simply disappear in the $H$-expansion of $\I_{\alpha}$.  

We claim that for the rectangle $(m^k)$, we cannot have two different tunnel hook coverings $\gamma \not= \gamma'$ such that $\fl(\Delta(\gamma)) = \fl(\Delta(\gamma'))$ and $(\Delta(\gamma))_i , (\Delta(\gamma'))_i\ge 0$ for $1 \le i \le k$.  To see this, assume $\fl(\Delta(\gamma)) = \fl(\Delta(\gamma'))$ for some $\gamma \not= \gamma'$ such that $(\Delta(\gamma))_i , (\Delta(\gamma'))_i\ge 0$ for all $1 \le i \le k$.  Let $j$ be the smallest positive integer such that $(\Delta(\gamma))_j \not= (\Delta(\gamma'))_j$.  If both $(\Delta(\gamma))_j >0$ and $(\Delta(\gamma'))_j >0$, then their flats must differ at the corresponding position.  One of $(\Delta(\gamma))_j$ or $(\Delta(\gamma'))_j$ must therefore be zero.  Assume without loss of generality that $(\Delta(\gamma))_j=0$.  Then the tunnel hook of $\gamma$ originating in row $j$ must begin with a cell outside of the diagram of $\alpha$.  But then the tunnel hook of $\gamma'$ originating in row $j$ must also begin with a cell outside of the diagram of $\alpha$, and therefore Lemma~\ref{lem:rect} implies that $(\Delta(\gamma'))_j=0$, contradicting the assumption that $(\Delta(\gamma))_j \not= (\Delta(\gamma'))_j$.  Therefore there is only one tunnel hook covering $\gamma$ such that $(\Delta(\gamma))_i \ge 0$ for all $1 \le i \le k$ with flattening $\fl(\Delta(\gamma))$.

We may therefore write the $H$-basis expansion of $\I_{(m^k)}$ using the flattenings $\fl(\Delta)$ as the subscripts, keeping track of the flattenings by associating the term $\fl(\Delta)$ with the pair $(\Delta,S)$, where $S$ is the unique allowable flat coarsening subset such that $\fl(\Delta) = \Theta(\Delta,S)$.  Since only one tunnel hook covering produces a given flattened composition, no entries cancel out in the $H$-basis expansion.  

To prove that the ribbon expansion satisfies Equation~(\ref{E:fullsize}), we first ignore any tunnel hook covering $\gamma$ such that $\Delta(\gamma)$ includes negative parts since $H_z=0$ when $z<0$.  Next, we construct a sign-reversing involution on pairs $(\Delta,S)$ where $\Delta$ is a weak composition obtained from a tunnel hook covering of $(m^k)$ and $S \not= \emptyset$ is a nontrivial allowable flat coarsening subset for $\Delta$.

Consider an arbitrary tunnel hook covering $\gamma$ of $\alpha$ described by tunnel cells $(\c_1, \c_2, \hdots , \c_i, \c_{i+1} , \hdots, \c_k)$ and corresponding indexing weak composition $\Delta (\gamma)=(\Delta_1, \Delta_2, \hdots , \Delta_k)$ such that $\Delta_i \ge 0$ for all $i$.  Let $R_{\beta}$ be a term in the ribbon expansion of $\I_{\alpha}$ with $\beta=\Theta(\Delta,S)$ such that $S = \{j_1, \hdots , j_s\}$ is a nontrivial allowable flat coarsening subset for $\Delta$.  Let $i$ be the index of the smallest element of $S$.  If $\c_i$ is not contained within the rectangle $(m^k)$, then $\Delta_i=0$ by Lemma~\ref{lem:rect}.   But if so, then $i-1 \in S$ (by the definition of allowable flat coarsening subset) which contradicts the assumption that $i$ is the smallest element of $S$.  Therefore $\c_i$ must be contained within the rectangle $(m^k)$ and we must have $\Delta_i > 0$.

Next, set $g(\Delta,S) = (f_i(\Delta),S)$, where $f_i(\Delta)$ is the sequence obtained by applying $f_i$ to the tunnel hook covering with associated sequence $\Delta$.  This map is sign-reversing by Lemma~\ref{lem:signs}.  Since $f_i$ is an involution and the set $S$ is unchanged, we must prove that $f_i(\Delta)$ is a weak composition and also that $S$ is an allowable flat coarsening subset for $f_i(\Delta)$.   If so, 
then every $\beta$ appearing as a subscript in the ribbon expansion of $\I_{(m^k)}$ satisfies $\beta=\Theta(\Delta,S)$ and is canceled out by the corresponding term $-R_{\beta}$ obtained from $\beta=\Theta(f_i(\Delta),S)$.  Therefore every $R_{\beta}$ whose indexing composition $\beta$ has length less than $k$ will be cancelled out in the ribbon expansion.

First note that Lemmas~\ref{T:perm} and~\ref{T:Mrj} imply that $(f_i(\Delta))_j=\Delta_j$ for $j \not= i,i+1$.  Therefore to see that $f_i(\Delta)$ is a weak composition, we must prove that $(f_i(\Delta))_i \ge 0$ and $(f_i(\Delta))_{i+1} \ge 0$.  

Recall that $\Delta_i >0$ since $i$ is the smallest element in the allowable flat coarsening subset $S$.  This means that the $i^{th}$ tunnel hook in $\gamma$ starts and ends within the diagram $(m^k)$.  Since the map $f_i$ changes the terminal cell but not the initial cell and tunnel hooks move north and west through the rectangle $(m^k)$, the $i^{th}$ tunnel hook in $f_i(\gamma)$ will also start and end within the diagram.  Therefore $(f_i(\Delta))_i >0$ we now only need to prove that $(f_i(\Delta))_{i+1} \ge 0$.  The only way for $(f_i(\Delta))_{i+1} < 0$ is if the $(i+1)^{th}$ tunnel hook of $f_i(\gamma)$ begins in a red cell.  But red cells can only be created when earlier tunnel hooks travel through purple or red cells.  Since all earlier tunnel hooks remain within the diagram of $(m^k)$, no red cells are created and the smallest possible value for $(f_i(\Delta))_{i+1}$ is $0$. 

Next, to prove that $S$ is an allowable flat coarsening subset for $f_i(\Delta)$, we must show that if $(f_i(\Delta))_r=0$, then $r-1 \in S$.  First consider the case that $r \not= i,i+1$.  Recall that $(f_i(\Delta))_r=\Delta_r$ for $r \not= i, i+1$.  If $(f_i(\Delta))_r=0$, then $\Delta_r=0$, and therefore $r-1 \in S$ since $S$ is an allowable flat coursening subset for $\Delta$.  Note that we have already shown $(f_i(\Delta))_i >0$, so the last case we need to consider is if $(f_i(\Delta))_{i+1} =0$.  But $i \in S$, so $S$ is an allowable flat coarsening subset for $f_i(\Delta)$ and the proof is complete. 
\end{proof}

We now combine the classes of compositions introduced in Theorems~\ref{thm:diagonal} and~\ref{T:Rectangle} to produce a larger class of compositions for which Equation~\eqref{E:fullsize} applies, therefore generalizing Campbell's results.

\begin{theorem}{\label{T:im2rib}}
Let $\alpha=(\alpha_1, \alpha_2, \hdots , \alpha_k)$ be a composition of positive integers for which there exists $J$ with $1 \le J \le k$ such that
$\alpha_{\ell} \ge \ell$ for $1 \le \ell \le J$ and $\alpha_{\ell} = J$ for $J+1 \le {\ell} \le k.$
Then 
$$\I_{\alpha}=\sum_{\sigma \in S_k} \sign(\sigma) R_{(\alpha_1-1+\sigma_1,\alpha_2-2+\sigma_2, \hdots , \alpha_k-k+\sigma_k)},$$ with the convention that $R_{\beta}$ vanishes if $\beta$ contains any nonpositive parts.
\end{theorem}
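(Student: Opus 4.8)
The plan is to fuse the two mechanisms already developed for Theorem~\ref{thm:diagonal} and Theorem~\ref{T:Rectangle} into a single sign-reversing involution, using the fact that the hooks originating in rows $1,\dots,J$ behave as in the staircase case ($\alpha_r\ge r$) while those originating in rows $J+1,\dots,k$ behave as in the rectangular case ($\alpha_r=J$). First I would invoke Theorem~\ref{T:main} together with the identity $\Delta(\h(r,\c_r))=\alpha_r-r+\sigma_r$ coming from Lemma~\ref{T:Mrj} and the bijection of Proposition~\ref{T:perm}, giving
$$\I_\alpha=\sum_{\sigma\in S_k}\sign(\sigma)\,H_{(\alpha_1-1+\sigma_1,\,\dots,\,\alpha_k-k+\sigma_k)},$$
where a term vanishes as soon as some subscript is negative. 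Expanding each surviving $H_{\Delta(\gamma)}$ into ribbons via Equation~\eqref{eq:HtoR} and recording each ribbon as a pair $(\Delta,S)$, with $S$ the unique allowable flat coarsening subset (as in the proofs of Theorems~\ref{thm:diagonal} and~\ref{T:Rectangle}), it then suffices to build a sign-reversing, fixed-point-free involution on the pairs with $S\neq\emptyset$: the surviving pairs have $S=\emptyset$, which forces $\Delta$ to have no zero parts and leaves exactly the length-$k$ ribbons of Equation~\eqref{E:fullsize}.

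The involution is $g(\Delta,S)=(f_i(\Delta),S)$ with $i=\min S$, exactly as before, and the real content is verifying that $f_i(\Delta)$ is again a weak composition and that $S$ stays allowable for it. Since $S$ is allowable and $i=\min S$, none of $\Delta_1,\dots,\Delta_i$ can vanish (a zero in position $r\le i$ would force $r-1\in S$), so $\Delta_r>0$ for $r\le i$; in particular $\sigma_i\ge i-\alpha_i+1$, while $\Delta_{i+1}\ge 0$ gives $\sigma_{i+1}\ge i+1-\alpha_{i+1}$. Using $(f_i(\Delta))_i=\alpha_i-i+\sigma_{i+1}$ and $(f_i(\Delta))_{i+1}=\alpha_{i+1}-(i+1)+\sigma_i$, I would establish nonnegativity by a three-case analysis on the position of $i$ relative to $J$. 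If $i+1\le J$, then $\alpha_i\ge i$ and $\alpha_{i+1}\ge i+1$ give $(f_i(\Delta))_i\ge\sigma_{i+1}\ge1$ and $(f_i(\Delta))_{i+1}\ge\sigma_i\ge1$. If $i\ge J+1$, then $\alpha_i=\alpha_{i+1}=J$ and the two displayed bounds on $\sigma_i,\sigma_{i+1}$ make both entries nonnegative, precisely as in the rectangle proof (and consistently with Lemma~\ref{lem:rect}, which controls the tail hooks). The straddling case $i=J$ uses $\alpha_J\ge J$ for the first entry and $\alpha_{J+1}=J$ to give $(f_i(\Delta))_{i+1}=\sigma_J-1\ge 0$. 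Because $(f_i(\Delta))_i>0$ in every case and $(f_i(\Delta))_{i+1}$ can vanish only when $i\in S$ already holds, $S$ remains an allowable flat coarsening subset for $f_i(\Delta)$.

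Finally I would confirm that $g$ is an involution and sign-reversing: $g\circ g$ reuses the same $i=\min S$ and $f_i\circ f_i=\mathrm{id}$, while Lemma~\ref{lem:signs} supplies the sign change; and since $f_i$ preserves the sum $\Delta_i+\Delta_{i+1}$ and alters $\Delta$ only in positions $i,i+1$, which are merged by the coarsening because $i\in S$, one gets $\Theta(\Delta,S)=\Theta(f_i(\Delta),S)$, so the paired ribbons coincide and cancel. This annihilates every pair with $S\neq\emptyset$ (equivalently every $\Delta$ carrying a zero part), leaving only the all-positive compositions $\Delta=(\alpha_1-1+\sigma_1,\dots,\alpha_k-k+\sigma_k)$ with $S=\emptyset$; extending the sum to all of $S_k$ under the convention that $R_\beta=0$ when $\beta$ has a nonpositive part produces Equation~\eqref{E:fullsize}. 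The main obstacle is the boundary case $i=J$, where the staircase and rectangular regimes meet and neither Theorem~\ref{thm:diagonal} nor Theorem~\ref{T:Rectangle} alone supplies the needed estimate; the hypothesis $\alpha_J\ge J=\alpha_{J+1}$ is exactly what makes both entries of $f_i(\Delta)$ nonnegative there.
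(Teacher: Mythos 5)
Your proposal is correct and follows the same overall strategy as the paper's proof: expand $\I_\alpha$ via Theorem~\ref{T:main}, pass to ribbons through Equation~\eqref{eq:HtoR}, encode each ribbon term as a pair $(\Delta,S)$ with $S$ the unique allowable flat coarsening subset, and cancel every pair with $S\neq\emptyset$ by the sign-reversing involution $g(\Delta,S)=(f_i(\Delta),S)$, $i=\min S$, with a case split on the position of $i$ relative to $J$. The one genuine difference is how you verify the key facts $(f_i(\Delta))_i>0$ and $(f_i(\Delta))_{i+1}\ge 0$ and hence that $S$ remains allowable: you compute directly from the identity $\Delta_r=\alpha_r-r+\sigma_r$ (Lemma~\ref{T:Mrj} together with Proposition~\ref{T:perm}) using only $\Delta_i>0$ and $\Delta_{i+1}\ge 0$, which reduces each of the three cases to a one-line inequality --- in particular the boundary case $i=J$ becomes $(f_i(\Delta))_{J+1}=\sigma_J-1\ge 0$. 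The paper instead argues geometrically on the \GBPR~diagram, tracking which tunnel hooks stay inside the diagram of $\alpha$, when red cells can be created, and invoking Lemma~\ref{lem:rect} for the rectangular tail $i>J$. Your arithmetic route is shorter and, I would say, more transparent; the paper's version buys some diagrammatic intuition and reuses Lemma~\ref{lem:rect}, which it needs anyway for Theorem~\ref{T:Rectangle}. Both arguments are complete.
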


\begin{proof}
As in the proof of Theorem~\ref{T:Rectangle}, we must prove that if $R_{\beta}$ (where $\beta$ is a strong composition) is a term appearing in the ribbon expansion of $\I_{\alpha}$ (after cancellation) then the length of $\beta$ is $k$.  To see this, we again describe a sign-reversing involution on pairs $(\Delta,S)$ where $\Delta$ is a weak composition obtained from a tunnel hook covering $\gamma$ of $\alpha$ and $S \not= \emptyset$ is a nontrivial allowable flat coarsening subset for $\Delta$.

Let $g(\Delta,S)=(f_i(\Delta),S)$ where $i$ is the smallest element in $S$.  Again, it is enough to show that $f_i(\Delta)$ is a weak composition  and $S$ is an allowable flat coarsening subset for $f_i(\Delta)$.  Since $(f_i(\Delta))_j = \Delta_j$ for $j \not= i, i+1$ by Lemmas~\ref{T:perm} and~\ref{T:Mrj}, to see that $f_i(\Delta)$ is a weak composition, we only need to show $(f_i(\Delta))_i \ge 0$ and $(f_i(\Delta))_{i+1} \ge 0$.
  
First assume $i<J$.  Then $(f_i(\Delta))_i > 0$ and $(f_i(\Delta))_{i+1} >0$ since $\alpha_i \ge i$ and $\alpha_{i+1} \ge i+1$.  Next, consider the case $i=J$.  Then $\alpha_i \ge i$, so $(f_i(\Delta))_i >0$ and we must show $(f_i(\Delta))_{i+1} \ge 0$.  The only way $(f_i(\Delta))_{i+1} < 0$ is if there are red cells in row $i+1$ after the construction of the first $i$ tunnel hooks of $f_i(\gamma)$.  But since $i=J$ and all rows of $\alpha$ higher than row $i$ have length $J$, all the tunnel hooks in rows $1$ through $i$ remain completely inside the original diagram $\mathcal{D}_{\alpha}$.  Therefore there are no red cells anywhere in the diagram after the construction of the first $i$ tunnel hooks of $f_i(\gamma)$.  So $(f_i(\Delta))_{i+1} \ge 0$.

Finally, assume $i>J$.  Then $\alpha_i=J=\alpha_{i+1}$ and in fact $\alpha_r = J$ for all $r \ge i$.  Therefore Lemma~\ref{lem:rect} applies to all tunnel hooks of $f_i(\gamma)$ constructed in rows $i$ and above.  Since we assumed that $\Delta_r \ge 0$ for all $1 \le r \le k$, any tunnel hook $\h_r$ (where $\h_r=\h(r, \c_r)$) in $\gamma$ (with $r \ge i$) such that $\Delta_r >0$ must lie entirely inside the diagram of $\alpha$.  (Otherwise by Lemma~\ref{lem:rect}, there would be a value $\ell$ such that $\Delta_{\ell} <0$, a contradiction.)  In particular, since $\Delta_i > 0$, the tunnel hook $\h_i$ must lie entirely within the diagram of $\alpha$.  But this means that there are blue cells in row $i$ when the tunnel hook $h'_i$ of $f_i(\gamma)$ originating in row $i$ is constructed.  Therefore $(f_i(\Delta))_i >0$.  Since tunnel hooks move north and west, $\h'_i$ remains entirely inside the diagram.  Therefore no red cells are created during the construction of $\h'_i$.  Since no red cells are created during the constructions of the tunnel hooks in rows $1$ to $i-1$ either, there are no red cells in row $i+1$ of $f_i(\gamma)$ when the tunnel hook $\h'_{i+1}$ originating in row $i+1$ is constructed.  Therefore $(f_i(\Delta))_{i+1} \ge 0$.

Next, to show $S$ is an allowable flat coarsening subset for $f_i(\Delta)$, we must prove that if $(f_i(\Delta))_r=0$, then $r-1 \in S$.  Consider first the case that $r \not= i,i+1$.  Since $S$ is an allowable flat coarsening subset for $\Delta$ and $(f_i(\Delta))_r=\Delta_r$ for $r \not= i, i+1$, if $(f_i(\Delta))_r=0$, then $\Delta_r=0$, and therefore $r-1 \in S$.  Note that we have already shown $(f_i(\Delta))_i >0$, so the last case we need to consider is if $(f_i(\Delta))_{i+1} =0$.  But since $i \in S$, our proof is complete.  Therefore $S$ is an allowable flat coarsening subset for $f_i(\Delta)$.
\end{proof}

To see that Campbell's rectangles $(a^b, c^d)$ with $b \le c$ and $b \le a$ are contained in our set of compositions for which Theorem~\ref{T:im2rib}, let $\alpha_1=\alpha_2= \hdots = \alpha_b=a$ and $\alpha_{b+1}=\alpha_{b+2} = \hdots = \alpha_{b+d}=c$.  Set $J=c$ in Theorem~\ref{T:im2rib}.  For $1 \le \ell \le b$, we have $\alpha_{\ell} = a \ge b \ge \ell$.  For $b \le \ell \le c$, we have $\alpha_{\ell}=c \ge \ell$.  For $c+1 \le \ell \le k$, we have $\alpha_{\ell}=c$.  Therefore Campbell's rectangles satisfy the hypotheses of Theorem~\ref{T:im2rib} with $J=c$, but are certainly not the only compositions satisfying these hypothess.


The compositions appearing in Theorem~\ref{T:im2rib} are not the full set for which Equation~\eqref{E:fullsize} is true.  For example, $$\I_{1123}=R_{1123}-R_{1132}-R_{1213}+R_{1231}+R_{1312}-R_{1321}$$ but $(1,1,2,3)$ does not satisfy the conditions in Theorem~\ref{T:im2rib} since $\alpha_2 < 2$ but $\alpha_3 \not= \alpha_2$.  We, like Campbell, leave the full classification as an open problem.

\section*{Acknowledgements}
We would like to thank Mike Zabrocki and Aaron Lauve for their helpful comments and thoughtful suggestions with respect to this project. 


\nocite{*}

\bibliographystyle{amsplain}

\bibliography{IHbib}
\label{sec:biblio}

\end{document}